\documentclass[10pt]{article}
 \font\smallit=cmti10

\usepackage{amssymb,latexsym,amsmath,epsfig,amsthm} 
\usepackage{txfonts}
\usepackage{xcolor}
\usepackage{hyperref}
\usepackage{enumerate}
\usepackage{booktabs}
\usepackage{float}
\usepackage{placeins}
\usepackage{authblk}
\usepackage{xurl}

\makeatletter

\renewcommand{\@seccntformat}[1]{\csname the#1\endcsname. }

\makeatother

\theoremstyle{plain}
\newtheorem{theorem}{Theorem}
\newtheorem{lemma}{Lemma}

\newtheorem{definition}{Definition}
\newtheorem{remark}{Remark}
\newtheorem{prop}{Proposition}
\newtheorem{fact}{Fact}
\newtheorem{condition}{Condition}
\newtheorem*{notation}{Notations}
\newtheorem{example}{Example}
\newtheorem*{maintheorem}{Theorem}
\newtheorem*{results}{Previous nonexistence results}

\theoremstyle{definition}
\newtheorem*{structure}{Structure of the paper}

\title{Nonexistence of certain classes of generalized bent functions: Revisiting the element partition method}
\author{Shi Ying and Yingpu Deng}
\date{May 13, 2026}

\begin{document}

\maketitle

\begin{center}

 {\smallit State Key Laboratory of Mathematical Sciences, Academy of Mathematics and Systems Science, Chinese Academy of Sciences, Beijing 100190, People's Republic of China}\\
 {and}\\
 {\smallit School of Mathematical Sciences, University of Chinese Academy of Sciences, Beijing 100049, People's Republic of China}\\

 \vskip 10pt

 {\tt \{yingshi,dengyp\}@amss.ac.cn}\\

 \vskip 10pt

 \noindent\textbf{Corresponding author:} Shi Ying, \texttt{yingshi@amss.ac.cn}

 \end{center}
 \vskip 11pt

\centerline{\bf \large Abstract}

\vskip 5pt

We obtain new nonexistence results for two classes of generalized bent functions
from $\mathbb{Z}_{q}^{n}$ to $\mathbb{Z}_{q}$, called type $[n,q]$ generalized
bent functions. The first class concerns the case
$q=2 p_1^{e_1} p_2^{e_2}$, where $p_1$ and $p_2$ are distinct odd primes.
By applying the element partition method introduced by Lv and Li to earlier results of Feng and Feng-Liu, we obtain sharper nonexistence results for several families of
parameters satisfying explicit congruence and order conditions. These results
extend known nonexistence theorems in cases where the prime divisors of the
odd part of $q$ are self-conjugate. The second class concerns the case
$q=2 \cdot 3^a \cdot 7^b$. By extending the idea of the element partition
method and combining it with explicit computations in suitable cyclotomic
fields and their subfields, we prove that generalized bent functions of type
$[1,2\cdot 3^a\cdot 7^b]$ do not exist for all positive integers $a$ and $b$.

\vskip 5pt

\textbf{Keywords}: Generalized bent functions, The element partition method, Cyclotomic fields, Class groups.

\noindent

\pagestyle{myheadings}

 \thispagestyle{empty}
 \baselineskip=12.875pt
 \vskip 17pt

\section{Introduction} \label{section_introduction}

Bent functions were first introduced by Rothaus in \cite{Rothaus_bent_functions} as Boolean functions having maximum distance to linear functions. Due to their applications in various subjects of coding theory, cryptography and combinatorics, bent functions have been widely studied and there are numerous works. We refer the reader to \cite{Survey_Bent_functions} for a survey of research on bent functions.   

The concept of bent functions has been generalized in many contexts, and we refer the reader to \cite{Survey_GBF} and \cite{Survey_GBF_2} for a survey on various generalizations of bent functions. Similar to bent functions, these generalizations are connected to many subjects in combinatorics, see \cite{GBF_and_combinatorics}, \cite{Survey_GBF_2} and \cite{Schmidt_combinatorics_subjects} for instance.

In this paper, we focus on the following generalization of bent functions, which was introduced by Kumar et al. \cite{def_GBF_1985} in 1985:

\begin{definition}
Let $n$ be a positive integer. Let $q \geq 2$ be an integer, $\mathbb{Z}_{q}=\mathbb{Z} / q \mathbb{Z}$ and $\zeta_{q}=e^{2 \pi \mathrm{i}/q}$. A function $f: \mathbb{Z}_{q}^{n} \rightarrow \mathbb{Z}_q$ is called a generalized bent function (GBF for short) of type $[n,q]$ if $\alpha_{f}(\lambda) \cdot \overline{\alpha_{f}(\lambda)}=q^{n}$ for all $\lambda \in \mathbb{Z}_{q}^{n}$, where
$$\alpha_{f}(\lambda)=\sum_{x \in \mathbb{Z}_{q}^{n}} \zeta_{q}^{f(x)-\lambda \cdot x}$$
is the Fourier coefficient (up to the constant $\sqrt{q^{n}}$, which can be viewed as the ``norm'' of the function $\zeta_{q}^{\lambda \cdot x}$) of $\zeta_{q}^{f(x)}$, and the dot in $\lambda \cdot x$ denotes the canonical dot product.
\end{definition}

Note that when $q=2$ this recovers the original concept of bent functions introduced by Rothaus. In the literature, by a generalized bent function, we always refer to this kind of generalization.

Rothaus \cite{Rothaus_bent_functions} proved that bent functions of length $n$ exist if and only if $2 \ | \ n$, i.e. generalized bent function of type $[n,2]$ exists iff $2 \ | \ n$. Thus, it is natural to ask the same question for generalized bent functions (with $q \neq 2$).

Kumar et al. \cite{def_GBF_1985} constructed generalized bent functions of type $[n,q]$ for $n$ and $q$ satisfying $2 \ | \ n$ or $q \nequiv 2 \ (\mathrm{mod} \ 4)$. On the other hand, when $2 \nmid n$ and $q \equiv 2 \ (\mathrm{mod} \ 4)$, no generalized bent function has been constructed, but many nonexistence results have been proved. It will be convenient to introduce the following long-standing (see Remark 2.2 of \cite{def_self_conj_Schmidt} for example) definition before stating some previous nonexistence results.

\begin{definition}
Let $p$ be a prime and $N=p^{e} N^{\prime}$ with $e \geq 0$ and $p \nmid N^{\prime}$. We say that $p$ is self-conjugate $\mathrm{mod} \ N$ if there exists $s \in \mathbb{Z}$ such that $p^s \equiv -1 \ (\mathrm{mod} \ N^{\prime})$. Note that when $N=p^e$ then $p$ is automatically self-conjugate $\mathrm{mod} \ N$.
\end{definition}

Now we are in a position to state the previous nonexistence results.

\begin{results}
Let $2 \nmid n$ and $q=2N$ with $N$ odd, we list some of the previous nonexistence results as follows:
\begin{enumerate}[(1)]
    \item (\cite[Prop 6]{def_GBF_1985}) Type $[n,2N]$ where $2$ is self-conjugate $\mathrm{mod} \ N$.
    \item \label{result_Ikeda} (\cite[The theorem on page 110]{Ikeda}) Type $[1,2N]$ where every prime divisor of $N$ is self-conjugate $\mathrm{mod} \ N$.
    \item \label{reslut_p_Feng} (\cite[Thm 3.1]{FKQ_earliest}) Type $[n,2 p^{e}]$ where $n<M$ for some upper bound $M$ and $p \equiv 7 \ (\mathrm{mod} \ 8)$ is a prime.
    \item \label{result_Feng} (\cite[Thm 4.1]{FKQ_earliest}) Type $[n,2N]$ where $N=p_{1}^{e_1} p_{2}^{e_2}$, $n < M$ for some upper bound $M$ and $p_1 \equiv 3 \ (\mathrm{mod} \ 4)$, $p_2 \equiv 5 \ (\mathrm{mod} \ 8)$ are primes such that $(\frac{p_1}{p_2})=-1$, which is equivalent to $p_1$, $p_2$ being self-conjugate $\mathrm{mod} \ N$.
    \item \label{result_Feng_Liu} (\cite[Thm 1 and Thm 2]{Feng_Liu}) Type $[n,2N]$ where $N=p_{1}^{e_1}p_{2}^{e_2}$, $n<M$ for some upper bound $M$, $p_1 \equiv 3 \ (\mathrm{mod} \ 4)$, $p_2 \equiv 1 \pmod{8}$, $(\frac{p_1}{p_2})=-1$ and $2$ is not a quartic residue $\mathrm{mod} \ p_2$. Note that in this case, $p_1$ and $p_2$ are also self-conjugate $\mathrm{mod} \ N$.
    \item \label{result_not_conjugate} (\cite{Feng_Liu_2} and \cite{Feng_Liu_Ma}) Type $[n,2N]$ where $N=p_1 p_2$, $n<M$ for some upper bound $M$ and at least one of $p_1$ and $p_2$ is not self-conjugate $\mathrm{mod} \ N$.
    \item \label{result_Jiang_Deng} (\cite[Thm 2]{Jiang_Deng}) Type $[n,2N]$ where $N=p_{1}^{e_1}p_2$, $n<M$ for some upper bound $M$ and at least one of $p_1$ and $p_2$ is not self-conjugate $\mathrm{mod} \ N$, generalizing the above result \ref{result_not_conjugate}.
    \item \label{result_3_23} (\cite[Thm 4]{Jiang_Deng}) Type $[3,2 \cdot 23^e]$, which generalizes the above result \ref{reslut_p_Feng}.
    \item \label{result_Li_Deng} (\cite[Thm 1]{Li_Deng}) See section \ref{section_review_EPM}. Note that this result is also a generalization of the above result \ref{reslut_p_Feng}.
    \item \label{Result_Lv_Li} (\cite{Lv_Li}) Some results generalizing the above results \ref{reslut_p_Feng} and \ref{result_Feng}.
    \item (\cite{Schmidt_nonexistence_results}) Having exactly the same type as result \ref{reslut_p_Feng}, but generalizing all above results \ref{reslut_p_Feng}, \ref{result_3_23}, \ref{result_Li_Deng} and \ref{Result_Lv_Li} sharing the same setting. In particular, a complete summary for $n=3,5,7$ is given in theorem 46.
    \item (\cite{Lv_2025}) Type $[n,2p^{e}]$ where $n$ has an upper bound and $p \equiv 1 \ (\mathrm{mod} \ 8)$ is a prime such that the multiplicative order of $2 \ \mathrm{mod} \ p$ is odd.
\end{enumerate}
\end{results}

Motivated by the above results, especially results \ref{result_Ikeda}, \ref{result_Feng}, \ref{result_Feng_Liu}, \ref{result_Li_Deng} and \ref{Result_Lv_Li}, we prove further nonexistence results. Our first main result is the following theorem:

\begin{maintheorem} (Theorem \ref{thm_two_primes_3_and_5})
Let $N=p_{1}^{e_1} p_{2}^{e_2}$, where $p_1 \equiv 3 \ (\mathrm{mod} \ 8)$ and $p_2 \equiv 5 \ (\mathrm{mod} \ 8)$ are two primes satisfying the following conditions:

\begin{enumerate}
    \item $(\frac{p_1}{p_2})=-1$, which is equivalent to both $p_1$ and $p_2$ are self-conjugate $\mathrm{mod} \ N$.
    \item The multiplicative order of $2$ in ${(\mathbb{Z} / N \mathbb{Z})}^{*}$ is $\frac{\varphi(N)}{2}$, or $s=1$ by the notation of theorem 4.1 of \cite{FKQ_earliest}. Note that this is satisfied iff $2$ is a primitive root of both $p_1$ and $p_2$ and $\gcd((p_1-1) p_{1}^{e_1-1},(p_2-1)p_{2}^{e_2-1})=2$, provided that $p_1$ and $p_2$ are not Wieferich primes. On the other hand, the condition $(\frac{p_1}{p_2})=-1$ shows that $p_1 \nequiv 1 \pmod{p_2}$ and $p_2 \nequiv 1 \pmod{p_1}$, so the condition can be simplified to $\gcd(\frac{p_1-1}{2},p_2-1)=1$. Also note that under this condition, the decomposition field of $2$ in $K=\mathbb{Q}(\zeta_N)$ is $\mathbb{Q}(\sqrt{-p_1 p_2})$.
\end{enumerate}

Let $m$ be the smallest positive integer such that $p_1 A^2+p_2 B^2=2^{m+2}$ has an integral solution $(A,B)$. Then by theorem 4.1 of \cite{FKQ_earliest}, $m$ is odd, and we have that:
 
There is no GBF of type $[m,2N]$.
\end{maintheorem}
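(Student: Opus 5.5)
Assume a GBF $f$ of type $[m,2N]$ exists with $m$ odd, and work in $K=\mathbb{Q}(\zeta_N)$, where $\zeta_{2N}=-\zeta_N$ lets us write every Fourier coefficient as $\alpha_f(\lambda)=\sum_x \pm\zeta_N^{a(x)}\in\mathbb{Z}[\zeta_N]$. The plan has three stages: establish the ideal and element structure of $\alpha_f(\lambda)$ (essentially the argument of Theorem 4.1 of \cite{FKQ_earliest}), tighten that structure with the element partition method of Lv and Li, and then derive a contradiction from a congruence modulo a suitable power of $2$.

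\textbf{Ideal and element structure.} Since $\alpha_f(\lambda)\overline{\alpha_f(\lambda)}=(2N)^m$, the principal ideal $(\alpha_f(\lambda))$ is determined by its valuations at the primes above $2$, $p_1$ and $p_2$. Because $p_1,p_2$ are self-conjugate mod $N$, each prime above $p_i$ is fixed by complex conjugation. This forces the $p$-parts of $\alpha$ to be $p_1^{e}p_2^{e'}$ times units in the appropriate sense, with total exponent $m/2$ distributed so that odd exponents appear as factors $\sqrt{\pm p_i}$. By condition 2, $2$ splits in $K$ into exactly two primes $\mathfrak{P},\bar{\mathfrak{P}}$, both lying over the primes of $\mathbb{Q}(\sqrt{-p_1p_2})$ above $2$, and $\mathfrak{P}^{a}\bar{\mathfrak{P}}^{b}$ with $a+b=m$ must be principal up to the $p$-part. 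The minimality of $m$, via the norm equation $p_1A^2+p_2B^2=2^{m+2}$, pins down that the $2$-part of $\alpha$ is, up to a unit and a root of unity, $\bigl((A\sqrt{-p_1}+B\sqrt{p_2})/2\bigr)$ or its conjugate, possibly multiplied by a rational power of $2$. Here I use that $\sqrt{-p_1}\in K$ since $p_1\equiv3\pmod 4$, and $\sqrt{p_2}\in K$ since $p_2\equiv1\pmod4$. Using that the class of $\mathfrak{P}$ restricted to $\mathbb{Q}(\sqrt{-p_1p_2})$ has order $m+2$ or similar, I would conclude
\[
\alpha_f(\lambda)=\pm\zeta_N^{j}\,2^{c}\,\sqrt{\pm p_1}^{\,u}\sqrt{p_2}^{\,v}\,\frac{A\sqrt{-p_1}\pm B\sqrt{p_2}}{2}
\]
with explicit exponents. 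The unit ambiguity is killed by complex conjugation, since $\alpha\bar\alpha$ is real and units of absolute value $1$ are roots of unity.

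\textbf{Element partition.} Following the element partition method of Section \ref{section_review_EPM}, write $\alpha_f(\lambda)=\sum_{i} n_i\zeta_N^{i}$ with $n_i\in\mathbb{Z}$, $\sum|n_i|$-type constraints, and $\sum_i n_i \equiv (2N)^m \pmod 2$. Writing $\sum_\lambda$ or individual coefficients in an integral basis, and comparing with the explicit form above, yields that the integer coefficients of $\alpha$ with respect to the basis $\{\zeta_N^i\}$ are all divisible by $2^{c}$ up to the factor coming from $(A\sqrt{-p_1}\pm B\sqrt{p_2})/2$. Since $A,B$ are odd (by minimality), expanding the Gauss-sum expressions $\sqrt{-p_1}=\sum_x\left(\frac{x}{p_1}\right)\zeta_{p_1}^x$ and $\sqrt{p_2}=\sum_x\left(\frac{x}{p_2}\right)\zeta_{p_2}^x$ makes these coefficients explicit. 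I then compare with the constraint that $\alpha_f(\lambda)$ is a sum of exactly $(2N)^m$ terms $\pm\zeta_N^{a}$, so that the multiplicity count over each residue class, i.e.\ the partition of $\mathbb{Z}_{2N}^m$ by the values of $f(x)-\lambda\cdot x$, satisfies $\sum_a |n_a|\le (2N)^m$ with matching parity.

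\textbf{Contradiction and main obstacle.} The contradiction should come from a size or parity mismatch between this bound and the forced coefficients, or from summing over $\lambda$: Parseval gives $\sum_\lambda \alpha_f(\lambda)\zeta^{\lambda\cdot y}=(2N)^m\zeta^{f(y)}$, and this becomes impossible once every $\alpha_f(\lambda)$ is known up to root of unity. The main obstacle is the second stage: extracting a genuinely sharper constraint than Feng's theorem, which only gave nonexistence for $n<m$. I would first try to show that the forced shape makes $\alpha_f(\lambda)/\zeta_N^{j}$ lie in $\mathbb{Q}(\sqrt{-p_1},\sqrt{p_2})$ with coefficients that are not all congruent mod $2$ in the way the element partition requires. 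The reason to expect this is that $m$ odd leaves an unpaired $\sqrt{2}$-type obstruction, which fails because $2$ is inert-like (condition 2) in the relevant quadratic subfields.
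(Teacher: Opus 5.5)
There is a genuine gap: your plan never contains the mechanism that produces the contradiction. Everything in your stages two and three looks at one coefficient $\alpha_f(\lambda)$ at a time (its coordinates in the $\zeta_N$-basis, a bound on $\sum|n_a|$, the parity of the term count), or at Parseval with each coefficient known only up to a root of unity. Feng's theorem already extracts what a single coefficient can give. For $n<m$ no element of absolute value $(2N)^{n/2}$ exists. For $n=m$ such elements do exist, e.g.\ $\omega_N^m\cdot\frac{A\sqrt{-p_1}+B\sqrt{p_2}}{2}\in\mathbb{Z}[\zeta_N]$. The number of terms in a representation by $\pm\zeta_N^a$ can be shifted by $2$ (insert $\zeta-\zeta$) or by an odd prime $p\mid N$ (insert $\sum_{i=0}^{p-1}\zeta_p^i$), so term-count parity carries no information. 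Your Parseval idea also leaves the root of unity and the ideal at each $\lambda$ unconstrained, with nothing linking different $\lambda$.

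The element partition method is not a partition of $\mathbb{Z}_{2N}^m$ by the values of $f(x)-\lambda\cdot x$. It compares $\alpha_f(\lambda)$ with $\alpha_f(\lambda+v)$ for $v$ of order $2$, using $\alpha_f(\lambda)+\alpha_f(\lambda+v)=2\sum_{x\cdot v=0}\zeta_{2N}^{f(x)-\lambda\cdot x}$. Once Conditions \ref{condition_1} and \ref{condition_2} hold at $n=m$, the two coefficients have the same $2$-support, hence generate the same ideal. Their ratio $u$ is then a root of unity. Since $2\nmid\alpha_f(\lambda)/\omega_N^m$, a prime above $2$ divides $1+u$, so $u=\pm1$ by Lemma \ref{lemma_(1+rootofunity)_divides_prime_above_2}. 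The counting argument of \cite{Li_Deng}, \cite{Lv_Li}, built on Lemma \ref{lemma_dot_product_of_Fouriercoefficients}, then gives the parity contradiction.

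So what you actually need is the two conditions, and for that your first stage relies on a step you only assert. After dividing by $\omega_N^m$ (Lemma \ref{lemma_self_conj_reduce_to_2}), you must show that $m$ is the least odd $\ell$ for which $\beta\overline{\beta}=2^{\ell}$ is solvable in $\mathcal{O}_K$, not merely in a small subfield. This is Feng's descent (Lemma 2.2 and the proof of Theorem 4.1 of \cite{FKQ_earliest}, using $s=1$). Any such $\beta$ becomes, after multiplication by a root of unity, an element of $\mathcal{O}_T$ with $T=\mathbb{Q}(\sqrt{-p_1},\sqrt{p_2})$, whose square lies in $\mathcal{O}_E$ with $E=\mathbb{Q}(\sqrt{-p_1p_2})$. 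Hence it has the form $\frac{r\sqrt{-p_1}+s\sqrt{p_2}}{2}$ with $p_1r^2+p_2s^2=2^{\ell+2}$.

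With this, $2\mid\beta$ would give a solution for $2^{m-2}$, so Condition \ref{condition_2} holds. Since $2\mathcal{O}_K=\mathfrak{P}\overline{\mathfrak{P}}$ (the decomposition field is $E$), $(\beta)=\mathfrak{P}^a\overline{\mathfrak{P}}^{m-a}$ together with $2\nmid\beta$ forces $(\beta)\in\{\mathfrak{P}^m,\overline{\mathfrak{P}}^m\}$, which is Condition \ref{condition_1}.

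Two smaller points. The class-group order ``$m+2$ or similar'' you invoke is not needed and is wrong: a prime above $2$ has even order in $\mathrm{Cl}(E)$, in fact $2m$. Your closing heuristic that $2$ is ``inert-like'' also points the wrong way. It is the splitting of $2$ into exactly two conjugate primes of $K$ that makes the support condition automatic.
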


The above theorem generalizes Feng's result in \cite{FKQ_earliest}, which is restated as result \ref{result_Feng} above. Theorems \ref{thm_two_primes_1_and_7} and \ref{thm_twoprimes_3and1} share a similar manner to the above theorem, and generalize the results of Feng-Liu in \cite{Feng_Liu} (restated as result \ref{result_Feng_Liu} above). See section \ref{direct_consequence_of_EPM} for the details.

Our second main result is the following theorem:

\begin{maintheorem} (Theorem \ref{main_thm_2})
Let $N=3^{a} \cdot 7^{b}$, where $a,b \in \mathbb{Z}_{>0}$, then generalized bent function of type $[1,2N]$ does not exist.
\end{maintheorem}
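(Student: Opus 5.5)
The approach is to turn the existence of a GBF of type $[1,2N]$ into a norm equation for an element of $\mathbb{Z}[\zeta_N]$. That equation is then shown impossible by combining the element partition method with explicit class-group and unit computations in subfields of $\mathbb{Q}(\zeta_{21})$.

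Suppose $f:\mathbb{Z}_{2N}\to\mathbb{Z}_{2N}$ is a GBF. Since $\zeta_{2N}=-\zeta_N^{(N+1)/2}$, every Fourier coefficient $\alpha_f(\lambda)$ lies in $\mathbb{Z}[\zeta_N]$ and satisfies $\alpha\bar\alpha=2N$. The first step is to rule out $\alpha$ being, up to a root of unity, a product of an element of norm $N$ and one of norm $2$. Because $2$ is self-conjugate mod $3$ and mod $7$ but not mod $21$ (its order mod $21$ is $6$, and $-1\notin\langle 2\rangle$), the prime $2$ splits in $K=\mathbb{Q}(\zeta_N)$ into two conjugacy classes of primes, exchanged by complex conjugation. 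The decomposition field of $2$ inside $\mathbb{Q}(\zeta_{21})$ is the quadratic field fixed by $\langle 2\rangle\subset(\mathbb{Z}/21)^*$, namely $\mathbb{Q}(\sqrt{-7})$, and $2$ is inert from there upward to $\mathbb{Q}(\zeta_{21})$. For higher powers $3^a7^b$ the ramification at $3$ and $7$ is total, and the splitting of $2$ is unchanged. Hence $\alpha\mathcal{O}_K=\mathfrak{P}\,\mathfrak{a}$, where $\mathfrak{P}$ is one of the two primes above $2$ and $\mathfrak{a}\bar{\mathfrak a}=(N)$. Since $3$ and $7$ are totally ramified and self-conjugate mod $N$, $\mathfrak a$ is, up to units, $(1-\zeta_{3^a})^{u}(1-\zeta_{7^b})^{v}$ with $3^{a}\mid$-type exponent constraints forcing $\mathfrak a$ to be $\sqrt{N}$ times an ideal fixed by conjugation. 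Taking a norm down to $\mathbb{Q}(\sqrt{-7})$, this yields an equation $x^2+7y^2=2^{2}\cdot(\text{power of }3,7)$ with a prescribed $2$-adic shape, as in the Feng-style arguments recalled in section \ref{section_review_EPM}.

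The element partition method enters at the next step. Write $\alpha=\sum_{x}\zeta_{2N}^{f(x)-\lambda x}$ as $\sum_{j}a_j\zeta_{2N}^{j}$ with $a_j\ge0$ and $\sum a_j=2N$. Reduce modulo the ideal generated by $(1-\zeta_{3^a})$ and $(1-\zeta_{7^b})$, i.e.\ pass to the images in $\mathbb{Z}[\zeta_{2\cdot 3}]$, $\mathbb{Z}[\zeta_{2\cdot 7}]$ and $\mathbb{Z}[\zeta_{42}]$ via the natural projections on exponents. This partitions the $2N$ elements of $\mathbb{Z}_{2N}$ into classes whose sizes are controlled by the norm equation. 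Concretely, the image $\beta$ of $\alpha$ in $\mathbb{Z}[\zeta_{42}]$ (obtained by summing over fibres of $\mathbb{Z}_{2N}\to\mathbb{Z}_{42}$) must be, up to a root of unity, $3^{(a-1)/2\text{ or similar}}7^{\cdots}$ times a fixed generator $\gamma$ of $\mathfrak{P}\cap\mathbb{Z}[\zeta_{21}]$ times $\sqrt{-3}^{\epsilon}\sqrt{-7}^{\delta}$. The second step is therefore to compute an explicit generator $\gamma$ of the prime above $2$ in $\mathbb{Q}(\zeta_{21})$, whose class number is $1$, together with its behaviour in the subfields $\mathbb{Q}(\sqrt{-7})$ and $\mathbb{Q}(\zeta_7)$. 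Here $\gamma$ can be taken to be $(1+\sqrt{-7})/2$ times a unit, adjusted by a unit of the maximal real subfield.

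The third and hardest step is to compare coefficients. The coefficients of $\beta$ in an integral basis are sums of the nonnegative $a_j$ over fibres, so they are nonnegative integers with total $2N$ and each bounded by $2N/42\cdot(\text{fibre size})$. Meanwhile $\beta=\pm\zeta\, c\,\gamma\,\varepsilon$ for a unit $\varepsilon$ and an explicit $c\in\mathbb{Z}[\sqrt{-3},\sqrt{-7}]$ of absolute value about $\sqrt{N/21}$ times a power. I will try first to eliminate the unit ambiguity by taking relative norms and traces down to $\mathbb{Q}(\zeta_7)$ and $\mathbb{Q}(\zeta_3)$. In $\mathbb{Q}(\zeta_3)$ the image of $\alpha$ has norm equal to the norm of an element generating a prime above $2$, but $2$ is inert in $\mathbb{Q}(\sqrt{-3})$, and this forces a parity contradiction once the fibre sums are reduced modulo $2$ and modulo $(1-\zeta_3)$. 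If the unit group of the relevant subfield, for instance the real cubic field $\mathbb{Q}(\zeta_7)^+$ with unit rank $2$, is not tamed by this, I would instead bound all Galois conjugates $|\sigma(\beta)|^2=2N$ simultaneously. This bounds $\varepsilon$ to finitely many candidates, which can be checked explicitly against the nonnegativity of the coefficients; I expect this unit-handling step to be the main obstacle. Independence from $a$ and $b$ comes from the fact that only the images in level $42$ and their divisibility by powers of $3$ and $7$ are used.
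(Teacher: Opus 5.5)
\textbf{Verdict: genuine gap.} Your plan looks for a contradiction in a single Fourier coefficient (norm equations, nonnegativity of coefficients, units), and no such contradiction exists.

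\emph{A single coefficient gives nothing.} Already for $N=21$, the element $\frac{1+\sqrt{-7}}{2}\sqrt{-3}\sqrt{-7}=(-3+\zeta_7+\zeta_7^2+\zeta_7^4)(\zeta_3-\zeta_3^2)$ has absolute value $\sqrt{42}$. It is a sum of $12$ roots of unity of order dividing $42$. Padding with pairs $\zeta,-\zeta$ makes it a sum of exactly $42$ of them, i.e.\ a legitimate value $\sum_x\zeta_{42}^{h(x)}$. So every norm equation or parity condition you can extract from one coefficient is satisfiable. Passing to $\mathbb{Q}(\zeta_3)$ does not help either: the relative norm of $\alpha$ times its conjugate is $(2N)^{\varphi(N)/2}$, an even power of $2$, so the inertness of $2$ in $\mathbb{Q}(\sqrt{-3})$ is no obstruction.

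\emph{The reduction step is ill-founded.} The elements $1-\zeta_{3^a}$ and $1-\zeta_{7^b}$ generate the unit ideal. The fibre-summed image $\sum_x\zeta_{42}^{f(x)-\lambda x}$ is the value at a non-primitive character, which bentness does not constrain, so your claimed factorisation of it is unjustified.

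\emph{Several structural claims are false.}
\begin{enumerate}
\item The prime $7$ is not self-conjugate mod $N$, since $7\equiv 1\pmod 3$. It is also not totally ramified: there are two primes $(\beta),(\overline{\beta})$ above it, swapped by conjugation. So the $7$-part of $(\alpha)$ is $(\beta)^l(\overline{\beta})^{6b-l}$ with $0\le l\le 6b$ free, not $\sqrt{N}$ times a conjugation-stable ideal.
\item The prime $2$ is not self-conjugate mod $7$.
\item For $a\ge 2$ the splitting of $2$ changes: it splits into six primes in $\mathbb{Q}(\zeta_{63})$, which has class number $7$. The paper handles this with Schmidt's field descent to $\mathbb{Q}(\zeta_{63})$. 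It also computes that the sextic decomposition field $D$ has $\mathrm{Cl}(D)\cong\mathbb{Z}/21\mathbb{Z}$, with the primes above $2$ of order $21$. This forces $x\overline{x}=2\Rightarrow x=\frac{1\pm\sqrt{-7}}{2}\zeta_{126}^j$.
\end{enumerate}

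\emph{The missing idea is to compare Fourier coefficients at different $\lambda$.} The paper's argument runs as follows.
\begin{enumerate}
\item Since $\alpha_f(\lambda)+\alpha_f(\lambda+N)=2\sum_{x\text{ even}}\zeta_{2N}^{f(x)-\lambda x}$, the two coefficients carry the same factor $\frac{1\pm\sqrt{-7}}{2}$.
\item Their ratio has the form $\beta^{-l}\overline{\beta}^{\,l}\zeta_{2N}^j$ or its inverse, and this requires $2\mid\beta^l+\overline{\beta}^{\,l}\zeta_{2N}^j$.
\item An explicit computation, plus a reduction of general $l$ to $l=1$, shows this holds only for $l=3i$ and $\zeta_{2N}^j=\pm\zeta_{42}^{5i}$. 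Hence the ratio lies in $\{\pm1\}\cup\{\pm x_1^{\pm i}:1\le i\le 2b\}$, where $x_1=\beta^{-3}\overline{\beta}^{\,3}\zeta_{42}^5$.
\item Partition $\mathbb{Z}_{2N}$ by this ratio. The involution $\lambda\mapsto\lambda+N$ makes the $\pm1$ classes of even size and equalises the sizes of the classes of $s$ and $s^{-1}$.
\item The orthogonality relation $\sum_\lambda\alpha_f(\lambda)\overline{\alpha_f(\lambda+N)}=0$ gives a linear relation among the class sizes. Using $7^{C}(x_1^i+x_1^{-i})\equiv 2\pmod 4$ and comparing constant terms, this forces $\sum b_i$ to be even, contradicting $\sum b_i=N$ odd.
\end{enumerate}
None of these ingredients appears in your proposal. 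Without some argument linking different $\lambda$, it cannot reach a contradiction.
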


See section \ref{section_extending_EPM} for the details of the above theorem.

\begin{structure}
This paper is organized as follows. In section \ref{section_results_in_ANT} we list some results in algebraic number theory that will be used. In section \ref{section_review_EPM} we briefly review the so-called ``element partition method'' and in section \ref{direct_consequence_of_EPM} we prove theorems \ref{thm_two_primes_3_and_5} to \ref{thm_twoprimes_3and1} by applying the element partition method, extending the results of Feng \cite{FKQ_earliest} and Feng-Liu \cite{Feng_Liu}. In section \ref{section_extending_EPM} we extend the element partition method to a case of type $[1,2N]$ where not all prime divisors of $N$ are self-conjugate $\mathrm{mod} \ N$. To be more specific, we prove the nonexistence of type $[1,2N]$ GBFs, where $N=3^{a} \cdot 7^{b}$, $a,b \in \mathbb{Z}_{>0}$. Note that here $7$ is not self-conjugate $\mathrm{mod} \ N$, and there is an algebraic integer having absolute value $\sqrt{2N}$ in $\mathbb{Z}[\zeta_{N}]$ (see section \ref{section_extending_EPM}). For a type $[n,q]$ GBF with $q=2N$, when not all prime divisors of $N$ are self-conjugate $\mathrm{mod} \ N$, previous nonexistence results (restated as results \ref{result_not_conjugate} and \ref{result_Jiang_Deng} above, see \cite{Feng_Liu_2}, \cite{Feng_Liu_Ma} and \cite{Jiang_Deng} for the details) mainly deal with parameters such that there is no algebraic integer with the prescribed absolute value in $\mathbb{Z}[\zeta_{N}]$, and $v_{p}(N)$ cannot tend to infinity for all prime divisors $p$ of $N$ (here $v_{p}(N)$ denotes the maximal positive integer $t$ such that $p^{t} \ | \ N$). To the best of our knowledge, although the prime divisors of $N$ considered ($3$ and $7$) are rather small, our result in section \ref{section_extending_EPM} is the first nonexistence result with parameters failing to meet the above two conditions, provided that not all prime divisors of $N$ are self-conjugate $\mathrm{mod} \ N$. In section \ref{section_scattered_results} we present some scattered results generalizing the results of Feng \cite{FKQ_earliest} and Feng-Liu \cite{Feng_Liu} under the assumption of the GRH, obtained by checking the conditions of the element partition method (see section \ref{section_review_EPM}) in PARI/GP \cite{PARI}. For each of our results, explanations of why they are new are presented after their statements or proofs. Finally, in section \ref{conclusion_and_future_work}, a short conclusion is given. All PARI/GP codes are available at \url{https://github.com/Bluedaydreaming-Y/Codes-for-nonexistence-results-of-generalized-bent-functions}.
\end{structure}

\section{Some results in algebraic number theory} \label{section_results_in_ANT}

Our method of proving nonexistence results of GBFs involves some classical aspects of algebraic number theory, mainly the basic arithmetic (prime decomposition, class group, units etc.) of number fields, especially cyclotomic fields and their subfields. The reader may consult \cite{JanuszAlgebraicNumberFields},\cite{NumberFieldsMarcus} or \cite{IntroductionToCyclotomicFields}, for example. In this section, we list some facts and lemmas with references of their proofs, and prove some results which will be used in the following parts of the paper.

\begin{notation}
Throughout the paper, for a number field $K$, we let $W_K$ denote the roots of unity in $K$, let $U_K$ denote the unit group of $K$ and let $K^{+}=K \cap \mathbb{R}$ when $K$ is a cyclotomic field.
\end{notation}

\begin{fact} \cite[Lemma 1.6]{IntroductionToCyclotomicFields} \label{conjugate_length1}
    If $\alpha$ is an algebraic integer all of whose conjugates have absolute value $1$, then $\alpha$ is a root of unity.
\end{fact}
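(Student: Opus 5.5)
The plan is the classical Kronecker argument via the embedding trick. Let $\alpha$ be an algebraic integer whose conjugates all have absolute value $1$, and let $d=[\mathbb{Q}(\alpha):\mathbb{Q}]$. The idea is to show that the powers $\alpha^k$ ($k\ge 1$) take only finitely many values. Then $\alpha^k=\alpha^l$ for some $k<l$, so $\alpha^{l-k}=1$ (note $\alpha\neq 0$ since $|\alpha|=1$), and $\alpha$ is a root of unity.

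For finiteness I will work with minimal polynomials. Each $\alpha^k$ is an algebraic integer lying in $\mathbb{Q}(\alpha)$, and if $\sigma_1,\dots,\sigma_d$ are the embeddings of $\mathbb{Q}(\alpha)$ into $\mathbb{C}$, then the conjugates of $\alpha^k$ are among the $\sigma_i(\alpha)^k$. All of these have absolute value $1$. So the characteristic polynomial
\[
P_k(x)=\prod_{i=1}^d \bigl(x-\sigma_i(\alpha)^k\bigr)
\]
has integer coefficients, since it is a power of the minimal polynomial of $\alpha^k$, which lies in $\mathbb{Z}[x]$ because $\alpha^k$ is an algebraic integer. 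The coefficient of $x^{d-j}$ is, up to sign, an elementary symmetric function of $d$ numbers of modulus $1$, hence bounded in absolute value by $\binom{d}{j}$. Therefore there are only finitely many possible polynomials $P_k$, each with finitely many roots, so the set $\{\alpha^k: k\ge1\}$ is finite.

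The only step requiring care is the integrality of the coefficients of $P_k$. I would justify it either by the standard fact that the field polynomial of an algebraic integer is a power of its monic minimal polynomial in $\mathbb{Z}[x]$, or directly. For the direct route, the coefficients of $P_k$ are symmetric polynomials in the conjugates of $\alpha$ with integer coefficients. They are therefore integer polynomials in the coefficients of the minimal polynomial of $\alpha$, which lie in $\mathbb{Z}$. No other obstacle is expected.
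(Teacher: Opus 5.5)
Your argument is correct. It is essentially the same as the proof behind this fact: the paper gives no proof of its own and only cites Washington's Lemma 1.6, which is exactly this Kronecker argument. The coefficients of $\prod_{i}(x-\sigma_i(\alpha)^k)$ are integers bounded independently of $k$, so $\alpha$ has only finitely many distinct powers. Your justification of integrality, via either the field polynomial or symmetric functions, and your note that $\alpha\neq 0$ handle the only points needing care.
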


\begin{remark} \label{cyclo_absolute_value_1}
Note that if $\alpha \in \mathcal{O}_K$ where $K=\mathbb{Q}(\zeta_{N})$ is a cyclotomic field and $\alpha$ has absolute value $1$, then all conjugates of $\alpha$ have absolute value $1$, since $\mathrm{Gal}(K / \mathbb{Q})$ is abelian and each $\sigma \in \mathrm{Gal}(K / \mathbb{Q})$ commutes with the map of conjugation.
\end{remark}

\begin{fact} \cite[Corollary 3 of thm 3]{NumberFieldsMarcus} \label{roots_of_unity_in_cyclotomic_fields}
    Let $N$ be an odd integer. Then the roots of unity in $\mathbb{Q}(\zeta_{N})$ are the $2N$-th roots of unity.
\end{fact}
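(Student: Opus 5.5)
The plan is to split into the cases $N$ odd and $N$ even, and in each case to pin down both inclusions between the set of roots of unity in $\mathbb{Q}(\zeta_N)$ and the set of $2N$-th roots of unity. The first inclusion is immediate in both cases. Since $N$ is odd, $-\zeta_N$ is a primitive $2N$-th root of unity. Every $2N$-th root of unity is $\pm\zeta_N^k$, so it lies in $\mathbb{Q}(\zeta_N)$. Equivalently, $\mathbb{Q}(\zeta_{2N})=\mathbb{Q}(\zeta_N)$.

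For the reverse inclusion, let $\omega\in\mathbb{Q}(\zeta_N)$ be a root of unity of exact order $m$. Put $L=\mathrm{lcm}(m,2N)$. Since $\mathbb{Q}(\zeta_N)$ contains both $\zeta_{2N}$ and $\omega$, and these generate the group of $L$-th roots of unity, we get $\zeta_L\in\mathbb{Q}(\zeta_N)$. Hence
\[
\mathbb{Q}(\zeta_L)\subseteq\mathbb{Q}(\zeta_N)=\mathbb{Q}(\zeta_{2N}),
\]
and comparing degrees gives $\varphi(L)\le\varphi(2N)$. Because $2N\mid L$, the multiplicativity formula $\varphi(L)=L\prod_{p\mid L}(1-1/p)$ shows that $\varphi(L)\ge\varphi(2N)$. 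Equality forces $L$ to have the same prime divisors as $2N$ and $L/2N=1$. So $L=2N$, hence $m\mid 2N$, which finishes the proof.

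The only step needing care is the degree comparison. It relies on the irreducibility of cyclotomic polynomials, that is $[\mathbb{Q}(\zeta_L):\mathbb{Q}]=\varphi(L)$, which I take as standard. I would also check the inequality $\varphi(L)\ge \varphi(2N)\cdot(L/2N)$ when $2N\mid L$, with equality only if $L=2N$. Since the result is a textbook fact cited from Marcus, I expect no real obstacle.
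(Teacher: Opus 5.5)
Your argument is the standard one behind the corollary in Marcus that the paper cites; the paper gives no proof of its own. You replace $\mathbb{Q}(\zeta_N)$ by $\mathbb{Q}(\zeta_{2N})$, pass to a primitive $\mathrm{lcm}(m,2N)$-th root of unity, and compare degrees. The first inclusion, the lcm step and the bound $\varphi(L)\le\varphi(2N)$ are all fine.

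The flaw is in the one step you single out as needing care. The inequality $\varphi(L)\ge\varphi(2N)\cdot(L/2N)$ that you propose to verify is false. For $2N=6$ and $L=30$ it reads $8\ge 10$. It fails whenever $L$ has a prime divisor not dividing $2N$, which is exactly the case you must exclude. So it cannot support your claim that equality forces $L$ to have the same prime divisors as $2N$, and your main text asserts that claim without any reason.

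The correct bookkeeping is as follows. Write $L=2N\,c_1c_2$, where every prime factor of $c_1$ divides $2N$ and $\gcd(c_2,2N)=1$. Then $2Nc_1$ has the same prime divisors as $2N$, so $\varphi(L)=\varphi(2Nc_1)\,\varphi(c_2)=c_1\,\varphi(2N)\,\varphi(c_2)$. Since $2\mid 2N$, the integer $c_2$ is odd, and an odd $c_2>1$ has $\varphi(c_2)\ge 2$. Hence $\varphi(L)\ge\varphi(2N)$, with equality only when $c_1=c_2=1$, i.e. $L=2N$.

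This is exactly where working with $2N$ instead of $N$ matters. With the odd number $N$ in place of $2N$, the choice $c_2=2$ would give $\varphi(c_2)=1$. That is precisely why $\mathbb{Q}(\zeta_N)$ contains the $2N$-th roots of unity and not merely the $N$-th ones. With this replacement your proof is complete.

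Finally, drop the announced split into $N$ odd and $N$ even. The statement concerns only odd $N$, and it is false for even $N$: $\mathbb{Q}(\zeta_4)$ contains no primitive $8$-th root of unity. Your argument never treats that case anyway.
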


\begin{fact} \cite[Corollary 4.13]{IntroductionToCyclotomicFields} \label{[A_2:A_1]}
    Let $K=\mathbb{Q}(\zeta_{N})$ with $N \nequiv 2 \ (\mathrm{mod} \ 4)$ and $K^{+}=K \cap \mathbb{R}$. Then $[U_{K}:W_K  U_{K^{+}}]=1$ if $N$ is a prime power and $[U_{K}:W_K U_{K^{+}}]=2$ if $N$ is not a prime power.
\end{fact}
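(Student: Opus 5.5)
The statement in question is Fact~\ref{[A_2:A_1]}: $[U_K:W_KU_{K^+}]$ equals $1$ for prime power $N$ and $2$ otherwise. I would prove it with the classical unit-index argument (Washington, Prop.~1.5 and Cor.~4.13).

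\emph{Step 1: a homomorphism to $W_K/W_K^2$.} For $u\in U_K$ set $\phi(u)=u/\bar u$. Since complex conjugation commutes with every element of $\mathrm{Gal}(K/\mathbb{Q})$, every conjugate of $\phi(u)$ has absolute value $1$. By Fact~\ref{conjugate_length1} and Remark~\ref{cyclo_absolute_value_1}, $\phi(u)\in W_K$. This gives a homomorphism $\phi:U_K\to W_K$. Composing with $W_K\to W_K/W_K^2$ gives $\psi:U_K\to W_K/W_K^2$.

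\emph{Step 2: the kernel of $\psi$ is $W_KU_{K^+}$.} If $u=\zeta v$ with $v$ real, then $\phi(u)=\zeta^2\in W_K^2$. Conversely, if $\phi(u)=\zeta^2$, then $v=\zeta^{-1}u$ satisfies $\bar v=v$, so $v\in U_{K^+}$. Hence $U_K/W_KU_{K^+}$ embeds in $W_K/W_K^2$. Since $|W_K|$ is even, this quotient has order $2$, so the index is $1$ or $2$. The index is $2$ exactly when some unit $u$ has $u/\bar u$ not a square in $W_K$.

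\emph{Step 3: prime power case.} Write $N=p^k$, or $2^k$ with $4\mid N$. Suppose $u/\bar u=\zeta$ with $\zeta$ a non-square. Let $\mathfrak p=(1-\zeta_N)$, the unique prime above $p$; it is fixed by conjugation. Every root of unity in $K$ is congruent to $1$ modulo $\mathfrak p$ up to sign, and for $p$ odd the non-squares in $W_K=\pm\mu_N$ have the form $-\zeta_N^j$. Writing $u=\zeta_N^{a}v$ with $v$ real, for $p$ odd we can arrange $u/\bar u=-1$, so $u$ is purely imaginary, $u=-\bar u$. Since $\bar\alpha\equiv\alpha \pmod{\mathfrak p}$ for all integers $\alpha$ (the residue field is $\mathbb F_p$ and conjugation fixes $\mathfrak p$), we get $u\equiv-u$, so $2u\in\mathfrak p$. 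This is impossible for $p$ odd.

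For $p=2$ the non-squares are the generators of $\mu_{2^k}$, and one needs a finer valuation argument. Take $u/\bar u=\zeta_N$ and compare $u-\bar u=\bar u(\zeta_N-1)$, a generator of $\mathfrak p$ up to a unit. But $u-\bar u$ is fixed by $\sigma=$ conjugation up to sign, and $\mathfrak p$ ramifies totally in $K/K^+$, which forces the valuation of $u-\bar u$ to be even. That contradicts valuation $1$.

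\emph{Step 4: non-prime-power case.} $N$ has at least two distinct prime factors, so $1-\zeta_N$ is a unit. Then
\[
\frac{1-\zeta_N}{\overline{1-\zeta_N}}=\frac{1-\zeta_N}{1-\zeta_N^{-1}}=-\zeta_N.
\]
When $N$ is odd, $W_K=\mu_{2N}$ and $-\zeta_N$ generates it, so it is not a square. When $4\mid N$, $W_K=\mu_N$ and $-\zeta_N$ is again a generator, hence a non-square. So the index is $2$.

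The main obstacle is Step~3. The case $p=2$ needs care: one must show that the valuation of $u-\bar u$ at the ramified prime is even. I would handle this as follows. Write $u=\zeta_N^{a}w$ with $w\in K^+$ to reduce to $\zeta_N^{2a+1}$ being real up to units. Then apply the norm from $K$ to $K^+$ of $1-\zeta_N$, which is the uniformizer squared.
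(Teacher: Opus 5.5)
The paper proves nothing here; it only cites Washington. Your Steps 1, 2 and 4 reproduce the standard argument there: Theorem 4.12, and the unit $1-\zeta_N$ for non-prime-power $N$. The odd-$p$ part of Step~3, via the residue field $\mathbb{F}_p$, is also sound. One slip there: ``writing $u=\zeta_N^{a}v$ with $v$ real'' should read ``replacing $u$ by $\zeta_N^{-a}u$'', since a $u$ of the former shape already has $u/\bar u$ a square.

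\textbf{The gap is the case $N=2^k$.} You argue that because $\overline{u-\bar u}=-(u-\bar u)$ and $\mathfrak p$ ramifies in $K/K^+$, the valuation $v_{\mathfrak p}(u-\bar u)$ must be even. Ramification gives evenness only for nonzero elements of $K^+$, via $\mathfrak p^+\mathcal{O}_K=\mathfrak p^2$ with $\mathfrak p^+=\mathfrak p\cap K^+$. For nonzero elements negated by conjugation, any two have quotient in $K^+$, so their valuations share a parity, but that parity can be odd. For example, in $\mathbb{Q}(\zeta_p)$ with $p$ odd, $\zeta_p-\zeta_p^{-1}$ is negated by conjugation, yet it has valuation $1$ at the prime $(1-\zeta_p)$, which is ramified over the real subfield. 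So the principle you invoke is false as stated. Your proposed repair does not close the gap either: ``write $u=\zeta_N^{a}w$ with $w\in K^+$'' presupposes $u\in W_KU_{K^+}$. That is the conclusion, and it contradicts $u/\bar u$ being a non-square.

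What is missing is one explicit anti-invariant unit. Since $4\mid N$, $\mathrm{i}=\zeta_N^{N/4}\in K$ satisfies $\bar{\mathrm{i}}=-\mathrm{i}$. Hence $r=(u-\bar u)/\mathrm{i}\in K^+$, and $v_{\mathfrak p}(u-\bar u)=v_{\mathfrak p}(r)=2v_{\mathfrak p^+}(r)$ is even. This contradicts $v_{\mathfrak p}(u-\bar u)=v_{\mathfrak p}(\zeta_N-1)=1$.

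Alternatively, you can treat every prime power $N=p^m$ uniformly by reusing your Step~4 computation:
\begin{enumerate}
\item $(1-\zeta_N)/\overline{(1-\zeta_N)}=-\zeta_N$ is still a non-square in $W_K$. For $p=2$ this holds because $-\zeta_N=\zeta_N^{1+N/2}$ with $1+N/2$ odd.
\item If some unit had $u/\bar u\notin W_K^2$, then $u(1-\zeta_N)$ would have conjugate ratio in $W_K^2$.
\item Your Step~2 argument works for any nonzero element, not just units. It gives $u(1-\zeta_N)=\zeta r$ with $\zeta\in W_K$ and $r\in K^+$.
\item Then $1=v_{\mathfrak p}(1-\zeta_N)=v_{\mathfrak p}(r)$ would be even, a contradiction.
\end{enumerate}
This also makes the residue-field argument for odd $p$ unnecessary.

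The paper only uses the Fact for odd $N$ with at least two prime factors (in Lemma~\ref{criteria_for_A2}), where your Step~4 already suffices. The statement as given, however, includes $N=2^k$.
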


\begin{fact} \cite[Thm 26]{NumberFieldsMarcus} \label{factorization_rules_of_primes_in_cyclotomic_fields}
    Let $N \nequiv 2 \ (\mathrm{mod} \ 4)$ be an integer and $K=\mathbb{Q}(\zeta_N)$. Let $\varphi$ denote the Euler function, with $\varphi(1)=1$. Let $p$ be a prime and $N=p^{l} N^{\prime}$ with $p \nmid N^{\prime}$, $l \geq 0$. Let $f$ be the multiplicative order of $p$ in ${(\mathbb{Z} / N^{\prime} \mathbb{Z})}^{*}$, i.e. the smallest positive integer $t$ such that $p^{t} \equiv 1 \ (\mathrm{mod} \ N^{\prime})$. Then $p \mathcal{O}_K=(\mathfrak{p}_1 \mathfrak{p}_2 \cdots \mathfrak{p}_g)^{e}$, where $\mathfrak{p}_1,\mathfrak{p}_2,\cdots,\mathfrak{p}_g$ are distinct prime ideals in $\mathcal{O}_K$, each having inertial degree $f$; $e=\varphi(p^l)$ and $g=\frac{\varphi(N^{\prime})}{f}$.
\end{fact}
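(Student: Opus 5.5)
We will prove the factorization by splitting $K=\mathbb{Q}(\zeta_N)$ as the compositum of $L_1=\mathbb{Q}(\zeta_{p^l})$ and $L_2=\mathbb{Q}(\zeta_{N'})$. We determine the behaviour of $p$ separately in each field and then use the multiplicativity of ramification indices and inertial degrees in towers. Since the extension is Galois, all primes above $p$ share the same $e$ and $f$, so $p\mathcal{O}_K=(\mathfrak{p}_1\cdots\mathfrak{p}_g)^{e}$ with $efg=[K:\mathbb{Q}]=\varphi(N)=\varphi(p^l)\varphi(N')$. It will therefore suffice to establish lower bounds $e\ge\varphi(p^l)$, $f_K\ge f$ and $g\ge \varphi(N')/f$: their product already equals $\varphi(N)$, which forces all three to be equalities.

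\textbf{Step 1 (total ramification in $L_1$).} We may assume $l\ge 1$; otherwise there is nothing to prove. The $p^l$-th cyclotomic polynomial $\Phi_{p^l}(x+1)$ is Eisenstein at $p$, and $\prod_{k\in(\mathbb{Z}/p^l)^*}(1-\zeta_{p^l}^k)=\Phi_{p^l}(1)=p$. Each $(1-\zeta_{p^l}^k)/(1-\zeta_{p^l})$ is a unit, so $p\mathcal{O}_{L_1}=(1-\zeta_{p^l})^{\varphi(p^l)}$ up to a unit. Hence $p$ is totally ramified in $L_1$, with ramification index $\varphi(p^l)$. By multiplicativity in the tower $K/L_1/\mathbb{Q}$, the ramification index $e$ of $p$ in $K$ is at least $\varphi(p^l)$.

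\textbf{Step 2 (unramified splitting in $L_2$).} Since $p\nmid N'$, the polynomial $x^{N'}-1$ is separable mod $p$, because its derivative $N'x^{N'-1}$ is coprime to it. Hence $p$ does not divide the discriminant of $\mathbb{Z}[\zeta_{N'}]$. Dedekind--Kummer then applies, either directly or via $\mathcal{O}_{L_2}=\mathbb{Z}[\zeta_{N'}]$, which may also be assumed. So the factorization of $p$ in $L_2$ mirrors the factorization of $\Phi_{N'}$ mod $p$ into distinct irreducibles. A root of $\Phi_{N'}$ in $\overline{\mathbb{F}}_p$ is a primitive $N'$-th root of unity. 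It lies in $\mathbb{F}_{p^t}$ iff $N'\mid p^t-1$, so its minimal polynomial has degree exactly $f$, the order of $p$ mod $N'$. Thus $p$ is unramified in $L_2$ and splits into $\varphi(N')/f$ primes of inertial degree $f$. Passing up to $K$, the inertial degree is at least $f$ and the number of primes above $p$ is at least $\varphi(N')/f$, since distinct primes of $L_2$ have disjoint sets of primes above them.

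\textbf{Step 3 (counting).} Combining Steps 1 and 2 gives $\varphi(N)=efg\ge\varphi(p^l)\cdot f\cdot\varphi(N')/f=\varphi(N)$. Hence $e=\varphi(p^l)$, the common inertial degree equals $f$, and $g=\varphi(N')/f$, which is the claimed factorization.

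The only step with real content is Step 2. The delicate point there is justifying the application of Dedekind--Kummer, i.e.\ that $p$ does not divide the index $[\mathcal{O}_{L_2}:\mathbb{Z}[\zeta_{N'}]]$. We would handle this by the discriminant argument above, since $\operatorname{disc}(x^{N'}-1)$ is a divisor of a power of $N'$. Alternatively, we would cite the standard fact that the ring of integers of a cyclotomic field is $\mathbb{Z}[\zeta_{N'}]$.
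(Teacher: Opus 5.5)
Your proof is correct and complete. It is the standard argument: total ramification of $p$ in $\mathbb{Q}(\zeta_{p^l})$, unramified splitting with residue degree $f$ in $\mathbb{Q}(\zeta_{N'})$, and the count $efg=\varphi(N)$ forcing the three lower bounds to be equalities. The paper gives no proof of its own and simply cites Marcus, Theorem~26, which rests on the same ingredients. As far as I recall, Marcus reads off the residue degree in $\mathbb{Q}(\zeta_{N'})$ directly from the residue field rather than from Dedekind--Kummer, which he states only afterwards as Theorem~27. That difference is immaterial.
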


\begin{fact}\cite[p 45, thm 9.3]{JanuszAlgebraicNumberFields} \label{QuadraticContained}
    Let $p$ be an odd prime. Then $\mathbb{Q}(\sqrt{p}) \subseteq \mathbb{Q}(\zeta_p)$ if $p \equiv 1 \ (\mathrm{mod} \ 4)$; and $\mathbb{Q}(\sqrt{-p}) \subseteq \mathbb{Q}(\zeta_p)$ if $p \equiv 3 \ (\mathrm{mod} \ 4)$.
\end{fact}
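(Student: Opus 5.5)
The statement just before the cut is Fact \ref{QuadraticContained}: for an odd prime $p$, $\mathbb{Q}(\sqrt{p^{*}}) \subseteq \mathbb{Q}(\zeta_p)$, where $p^{*}=(-1)^{(p-1)/2}p$. I would prove it with the quadratic Gauss sum. Set
$$g=\sum_{a=1}^{p-1}\left(\frac{a}{p}\right)\zeta_p^{a}.$$
Clearly $g \in \mathbb{Z}[\zeta_p]$. It then suffices to show $g^2=\left(\frac{-1}{p}\right)p = p^{*}$. Indeed, $\left(\frac{-1}{p}\right)=1$ exactly when $p\equiv 1 \pmod 4$, so this identity gives $\sqrt{p}$ or $\sqrt{-p}$ (namely $\pm g$) inside $\mathbb{Q}(\zeta_p)$ in the two cases of the statement.

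To compute $g^2$, I would expand it as a double sum $\sum_{a,b}\left(\frac{ab}{p}\right)\zeta_p^{a+b}$. Since $a$ is a unit, substitute $b=ac$ with $c$ running over $(\mathbb{Z}/p\mathbb{Z})^{*}$. Using $\left(\frac{a^2c}{p}\right)=\left(\frac{c}{p}\right)$, this gives
$$g^2=\sum_{c\neq 0}\left(\frac{c}{p}\right)\sum_{a\neq 0}\zeta_p^{a(1+c)}.$$
The inner sum is evaluated by cases:
\begin{itemize}
\item if $c=-1$, it equals $p-1$;
\item otherwise $\zeta_p^{1+c}$ is a primitive $p$-th root of unity, so the inner sum equals $-1$.
\end{itemize}
Hence
$$g^2=\left(\frac{-1}{p}\right)(p-1)-\sum_{c\neq 0,-1}\left(\frac{c}{p}\right).$$
Because $\sum_{c\neq 0}\left(\frac{c}{p}\right)=0$, the last sum equals $-\left(\frac{-1}{p}\right)$. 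Therefore $g^2=\left(\frac{-1}{p}\right)p$, which completes the argument.

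The main obstacle is only the bookkeeping in this double-sum manipulation, specifically the change of variables and isolating the term $c=-1$. There is also an alternative that avoids it. The field $\mathbb{Q}(\zeta_p)$ is cyclic of degree $p-1$, hence has a unique quadratic subfield. That subfield is ramified only at $p$ (Fact \ref{factorization_rules_of_primes_in_cyclotomic_fields} shows that only $p$ ramifies in $\mathbb{Q}(\zeta_p)$). The only quadratic field ramified solely at $p$ is $\mathbb{Q}(\sqrt{p^{*}})$, since $\mathbb{Q}(\sqrt{d})$ with $d\equiv 2,3 \pmod 4$ is ramified at $2$. I would present the Gauss sum argument as the main proof and mention this discriminant argument as a check.
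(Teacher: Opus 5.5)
Your proof is correct and complete. The substitution $b=ac$, the evaluation of the inner sums (giving $p-1$ at $c=-1$ and $-1$ otherwise) and $\sum_{c\neq 0}\left(\frac{c}{p}\right)=0$ together give $g^2=\left(\frac{-1}{p}\right)p=(-1)^{(p-1)/2}p$. So $\pm g\in\mathbb{Z}[\zeta_p]$ is the required square root in each case.

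The paper does not prove this fact; it only cites Janusz. Your Gauss-sum route is, however, exactly the content of the paper's next item, Fact \ref{fact_quadratic_represented_by_root_of_unity}, cited from Ireland--Rosen. That fact records the sharper, sign-determined evaluation $g=\sqrt{p}$ or $g=\mathrm{i}\sqrt{p}$, which is more than the containment needs. You only prove $g^2$, the easy half, which suffices here.

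Your ramification cross-check is also valid. It uses the uniqueness of the quadratic subfield of the cyclic field $\mathbb{Q}(\zeta_p)$, and the fact that a quadratic field unramified outside the odd prime $p$ must have discriminant $\pm p$, forcing $d=(-1)^{(p-1)/2}p$.
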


\begin{fact} \cite[p 75, thm 1]{number_theory_Ireland_Rosen} \label{fact_quadratic_represented_by_root_of_unity}
Let $p$ be an odd prime and $\zeta_{p}=e^{2 \pi \mathrm{i}/p}$. Then $\displaystyle\sum_{a \in {(\mathbb{Z} / p \mathbb{Z})}^{*}} \Big(\frac{a}{p}\Big) \cdot \zeta_{p}^{a}=\sqrt{p}$ if $p \equiv 1 \ (\mathrm{mod} \ 4)$ and $\displaystyle\sum_{a \in {(\mathbb{Z} / p \mathbb{Z})}^{*}} \Big(\frac{a}{p}\Big) \cdot \zeta_{p}^{a}=\mathrm{i} \sqrt{p}$ if $p \equiv 3 \ (\mathrm{mod} \ 4)$.
\end{fact}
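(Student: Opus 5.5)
Since this Fact is only quoted from \cite{number_theory_Ireland_Rosen}, here is how I would prove it directly. Write $g=\sum_{a\in(\mathbb{Z}/p\mathbb{Z})^*}\big(\frac{a}{p}\big)\zeta_p^a$. The proof has two stages: first determine $g$ up to sign, then fix the sign. The second stage is the real content.

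\emph{Stage one: $g^2=\big(\frac{-1}{p}\big)p$.} Expand $g^2=\sum_{a,b}\big(\frac{ab}{p}\big)\zeta_p^{a+b}$. Substitute $b=ac$ with $c\neq 0$ to get $g^2=\sum_{c}\big(\frac{c}{p}\big)\sum_{a\neq 0}\zeta_p^{a(1+c)}$. The inner sum equals $p-1$ when $c=-1$ and $-1$ otherwise. Since $\sum_c\big(\frac{c}{p}\big)=0$, this gives $g^2=\big(\frac{-1}{p}\big)p$. Hence $g=\pm\sqrt{p}$ if $p\equiv 1 \pmod 4$ and $g=\pm\mathrm{i}\sqrt{p}$ if $p\equiv 3 \pmod 4$.

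\emph{Stage two: the sign.} First rewrite $g=\sum_{n=0}^{p-1}\zeta_p^{n^2}$. This holds because each nonzero square is hit twice, and $\sum_{a}\zeta_p^a=-1$ absorbs the $n=0$ term. I would then follow Dirichlet's analytic argument. Let $F(x)=\sum_{n=0}^{p-1}e^{2\pi\mathrm{i}(x+n)^2/p}$ for $x\in[0,1]$. Since $F(0)=F(1)=g$, its $1$-periodic extension is continuous and piecewise smooth. Its Fourier series therefore converges pointwise to $F$, in particular at $x=0$.

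Computing the coefficients gives
$$g=\sum_{k\in\mathbb{Z}}\int_0^p e^{2\pi\mathrm{i}(x^2/p-kx)}\,dx.$$
Complete the square, $x^2/p-kx=(x-kp/2)^2/p-k^2p/4$. The terms then split according to the parity of $k$, and they telescope into two improper Fresnel integrals over $\mathbb{R}$. Using $\int_{\mathbb{R}}e^{2\pi\mathrm{i}t^2/p}\,dt=\sqrt{p}\,(1+\mathrm{i})/2$, this yields the classical formula
$$\sum_{n=0}^{p-1}e^{2\pi\mathrm{i}n^2/p}=\frac{1+\mathrm{i}^{-p}}{1+\mathrm{i}^{-1}}\sqrt{p}.$$
Evaluating the right side for $p\equiv 1$ and $p\equiv 3 \pmod 4$ gives $\sqrt p$ and $\mathrm{i}\sqrt p$ respectively.

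\emph{Main obstacle.} The hard step is the analytic bookkeeping in stage two: justifying the pointwise convergence of the Fourier series at $x=0$, and regrouping the conditionally convergent sum over $k$ into Fresnel integrals. If this becomes messy, I would switch to Gauss's product method as a fallback. One shows $g=\prod_{k=1}^{(p-1)/2}\big(\zeta_p^{2k-1}-\zeta_p^{-(2k-1)}\big)$: the two sides have equal squares, and a comparison of polynomials modulo the cyclotomic polynomial shows they agree exactly rather than only up to sign. The sign is then read off by counting negative values of $\sin(2\pi(2k-1)/p)$. In that route, the polynomial comparison is the delicate part.
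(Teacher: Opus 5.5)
Your proposal is correct. The paper gives no proof of its own; it only cites Ireland--Rosen, whose argument is essentially your fallback. Set $P=\prod_{k=1}^{(p-1)/2}(\zeta_p^{2k-1}-\zeta_p^{-(2k-1)})$. One shows $P^2=(-1)^{(p-1)/2}p$, reads off $P=\sqrt p$ or $\mathrm{i}\sqrt p$ from the signs of $\sin(2\pi(2k-1)/p)$, and then proves $g=P$ rather than $-P$.

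Your sketch is vague at that last step, and divisibility by $\Phi_p$ alone does not fix the sign. Write $\sum_{t=0}^{p-1}x^{t^2}-\epsilon\prod_k(x^{2k-1}-x^{p-2k+1})=\Phi_p(x)h(x)$ with $h\in\mathbb{Z}[x]$. Substitute $x=e^{z}$ and compare coefficients of $z^{(p-1)/2}$. The right-hand coefficient is divisible by $p$. The left-hand one is $\equiv(\epsilon-1)/((p-1)/2)!$ modulo $p$, by Wilson's theorem and $\sum_t t^{p-1}\equiv -1$. Hence $\epsilon=1$.

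Your main route, Dirichlet's Fourier argument, is genuinely different and also sound. Its bookkeeping is lighter than you fear. Since $F$ is smooth on $[0,1]$ with $F(0)=F(1)$, two integrations by parts give $c_k=O(k^{-2})$; the only defect of the periodic extension is a jump of $F'$ at the integers. So the Fourier series converges absolutely. Splitting by the parity of $k$ and concatenating the intervals $[-kp/2,\,p-kp/2]$ into $\int_{\mathbb{R}}$ then needs no further justification.

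The two routes trade off as follows:
\begin{itemize}
\item Your analytic route gives generality. The same computation yields $\sum_{n=0}^{N-1}e^{2\pi\mathrm{i}n^2/N}=\frac{1+\mathrm{i}^{-N}}{1+\mathrm{i}^{-1}}\sqrt N$ for every $N\geq 1$, and the case $N=1$ even evaluates the Fresnel integral without a contour argument.
\item The Ireland--Rosen route stays inside $\mathbb{Z}[\zeta_p]$ and uses no analysis beyond the sign of a sine, but it is specific to prime $p$.
\end{itemize}

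Finally, the sign is immaterial for how the paper uses this Fact, which is to write $\sqrt{-3}=2\zeta_{21}^{7}+1$ and $\sqrt{-7}$ explicitly in $\mathbb{Z}[\zeta_{21}]$. Any sign change is absorbed by the $\pm$ in $\frac{1\pm\sqrt{-7}}{2}$ or by $\zeta_{42}^{21}=-1$, so your stage one alone would already suffice there.
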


\begin{lemma} \label{lemma_order_mod_p^s}
Assume that $a$ has multiplicative order $f \ \mathrm{mod} \ p$, where $p$ is a prime. If $a^{p-1} \nequiv 1 \ (\mathrm{mod} \ p^2)$, then $a$ has multiplicative order $f \cdot p^{e-1} \ \mathrm{mod} \ p^e$.
\end{lemma}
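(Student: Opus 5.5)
We argue by induction on $e \geq 1$, with the statement for each $e$ strengthened to two claims: (i) the order of $a$ modulo $p^e$ is $f p^{e-1}$, and (ii) $a^{f p^{e-1}} = 1 + p^{e} u_e$ for some integer $u_e$ with $p \nmid u_e$. Note first that $p \nmid a$, since $a$ has an order modulo $p$, and that $f \mid p-1$.

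For the base case $e=1$, claim (i) holds by definition. For (ii), write $a^f = 1 + p u_1$. Put $k=(p-1)/f$. Then
\[
a^{p-1} = (1+pu_1)^k \equiv 1 + k p u_1 \pmod{p^2},
\]
and $p \nmid k$. So the hypothesis $a^{p-1} \not\equiv 1 \pmod{p^2}$ forces $p \nmid u_1$.

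For the inductive step, assume (i) and (ii) for $e$. The order of $a$ modulo $p^{e+1}$ reduces to an order modulo $p^e$, so it is a multiple of $f p^{e-1}$. It also divides $f p^{e}$, because $a^{fp^e} = (1+p^e u_e)^p \equiv 1 \pmod{p^{e+1}}$. Hence it equals either $f p^{e-1}$ or $f p^{e}$. The first option is excluded by (ii), since $a^{fp^{e-1}} = 1 + p^e u_e \not\equiv 1 \pmod{p^{e+1}}$. This proves (i) for $e+1$.

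For (ii) at $e+1$, expand
\[
(1+p^e u_e)^p = 1 + p^{e+1} u_e + \binom{p}{2} p^{2e} u_e^2 + \cdots .
\]
When $p$ is odd, or when $e \geq 2$, every term after the second is divisible by $p^{e+2}$. Indeed, for $2 \le j \le p-1$ the $j$-th term is divisible by $p^{1+je}$ with $1+je \ge e+2$. The last term is divisible by $p^{pe}$, and $pe \ge e+2$ unless $p=2$ and $e=1$. Therefore $u_{e+1} \equiv u_e \not\equiv 0 \pmod p$.

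The main obstacle is the exceptional case $p=2$, $e=1$. There $(1+2u)^2 = 1 + 4u(u+1)$, so the $p$-adic valuation can jump; for example $a=3$ has order $2$ modulo $8$, not $4$. Our plan is to treat this case separately. In the paper the lemma is only applied to the odd primes $p_1, p_2$, so we would either state $p$ odd explicitly or add the condition $a \equiv 1 \pmod 4$ when $p=2$. Under that extra condition the base case already gives $v_2(a^f - 1) = 2$, and the same expansion argument goes through.
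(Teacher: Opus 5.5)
Your argument is correct for odd $p$ and is essentially the paper's proof. The paper also handles $e=2$ directly and then inducts using the binomial expansion of $(1+sp^{e-2})^p$. Its observation that $p\nmid s$ (because $n_{e-1}>n_{e-2}$) is exactly your invariant (ii); you simply carry the valuation information explicitly.

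You are right that the statement fails for $p=2$ (e.g.\ $a=3$, $e=3$). The paper's own congruence $(1+sp^{e-2})^p\equiv 1+sp^{e-1}\pmod{p^e}$ silently breaks at $p=2$, $e=3$.

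Your alternative patch, however, does not work. For $p=2$ the hypothesis $a^{p-1}\not\equiv 1\pmod 4$ already forces $a\equiv 3\pmod 4$, so adding $a\equiv 1\pmod 4$ makes the statement vacuous. And if you replace the hypothesis by $v_2(a-1)=2$, the order of $a$ modulo $2^e$ is $2^{e-2}$ for $e\geq 2$, not $f\cdot 2^{e-1}$. In that case your invariant (ii) fails already at $e=1$.

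Drop that option and simply assume $p$ odd. This covers every use of the lemma in the paper: $p=3$ with $a=2,7$, and $p=p_1,p_2$ with $a=2$.
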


\begin{proof}
We first consider the order of $a \ \mathrm{mod} \ p^2$, which we denote by $n_{2}$. Since $a$ has order $f \ \mathrm{mod} \ p$, we have that $a^{f}=1+lp$, so $a^{pf}={(1+lp)}^{p} \equiv 1 \ (\mathrm{mod} \ p^2)$, and hence $n_2 \ | \ pf$. On the other hand we have that $a^{n_{2}} \equiv 1 \ (\mathrm{mod} \ p^2)$, and hence $a^{n_{2}} \equiv 1 \ (\mathrm{mod} \ p)$, so $f \ | \ n_{2}$. Hence $n_{2}=f \ \text{or} \ pf$. But since $a^{p-1} \nequiv 1 \ (\mathrm{mod} \ p^2)$, we have that $a^{f} \nequiv 1 \ (\mathrm{mod} \ p^2)$, so $n_2=pf$. This shows that the lemma is true for $e=2$.

Now we prove the lemma for general $e \geq 3$ by induction. Let $n_{e}$,$n_{e-1}$ and $n_{e-2}$ denote the order of $a \ \mathrm{mod} \ p^{e}$, $p^{e-1}$ and $p^{e-2}$ respectively. Suppose the lemma is true for $k \leq e-1$, then $n_{e-1}=f \cdot p^{e-2}=p \cdot n_{e-2}$ and $a^{n_{e-1}}=1+t p^{e-1}$. Then $a^{p \cdot n_{e-1}} \equiv 1 \ (\mathrm{mod} \ p^{e})$, so $n_{e} \ | \ p \cdot n_{e-1}$. Moreover, $n_{e-1} \ | \ n_e$ since $a^{n_e} \equiv 1 \ (\mathrm{mod} \ p^{e-1})$. Hence $n_{e}=n_{e-1} \ \text{or} \ p \cdot n_{e-1}$. Let $a^{n_{e-2}}=1+ s \cdot p^{e-2}$, then since $n_{e-1} > n_{e-2}$ we have that $p \nmid s$, so $a^{p \cdot n_{e-2}}=a^{n_{e-1}}={(1+ s \cdot p^{e-2})}^{p} \equiv 1+s \cdot p^{e-1} \ (\mathrm{mod} \ p^{e})$, and hence $a^{n_{e-1}} \nequiv 1 \ (\mathrm{mod} \ p^e)$, and this shows that $n_e \neq n_{e-1}$. Hence $n_e=p \cdot n_{e-1}$, which completes the proof of the lemma.
\end{proof}

\begin{remark} \label{Wieferich_primes_are_rare}
Note that numbers $a$ satisfying $a^{p-1} \equiv 1 \ (\mathrm{mod} \ p^2)$ are quite rare, see \cite{Wieferich_primes} or \cite{Wieferich_primes_2} for instance. For example, when $a=2$, such primes are called Wieferich primes, and it has been established in \cite{Wieferich_primes_2} that $p=1093$ and $p=3511$ are the only two Wieferich primes less than $6.7 \times 10^{15}$.
\end{remark}

\begin{lemma} \cite[Lemma 4 and 5]{Ikeda} \label{DecompositionGroup} 
    Let $N=p^e N^{\prime}$ be an integer with $p$ prime and $p \nmid N^{\prime}$, $e \geq 0$. Then the decomposition group of $p$ in $\mathrm{Gal}(\mathbb{Q}(\zeta_N) / \mathbb{Q})$ contains the complex conjugation map if and only if there exists an integer $s$ such that $p^{s} \equiv -1 \ (\mathrm{mod} \ N^{\prime})$, or in other words, if and only if $p$ is self-conjugate $\mathrm{mod} \ N$.
\end{lemma}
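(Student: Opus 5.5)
The plan is to translate both conditions into the language of Galois theory and compare subgroups of $G=\mathrm{Gal}(\mathbb{Q}(\zeta_N)/\mathbb{Q})\cong(\mathbb{Z}/N\mathbb{Z})^{*}$, using the standard identification $\sigma_t(\zeta_N)=\zeta_N^{t}$. Let $\mathfrak{P}$ be a prime of $\mathcal{O}_K$, $K=\mathbb{Q}(\zeta_N)$, lying above $p$. Complex conjugation is $\sigma_{-1}$. Split $\zeta_N=\zeta_{p^e}^{u}\zeta_{N'}^{v}$, so that $K=\mathbb{Q}(\zeta_{p^e})\mathbb{Q}(\zeta_{N'})$ and $G\cong(\mathbb{Z}/p^e\mathbb{Z})^{*}\times(\mathbb{Z}/N'\mathbb{Z})^{*}$ via the Chinese remainder theorem.

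\textbf{Step 1: describe the decomposition group $D_{\mathfrak{P}}$.} By Fact \ref{factorization_rules_of_primes_in_cyclotomic_fields}, $p$ is totally ramified in $\mathbb{Q}(\zeta_{p^e})$ and unramified in $\mathbb{Q}(\zeta_{N'})$, and in the latter its Frobenius is $\sigma_p$ restricted there. Hence $D_{\mathfrak{P}}$, which is independent of $\mathfrak{P}$ since $G$ is abelian, has order $e_p f=\varphi(p^e)f$. Its inertia subgroup $I$ is the fixed group of the maximal subfield unramified at $p$. That subfield is $\mathbb{Q}(\zeta_{N'})$, so $I=\{\sigma_t: t\equiv 1 \pmod{N'}\}\cong(\mathbb{Z}/p^e\mathbb{Z})^{*}$. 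The quotient $D/I$ is generated by the Frobenius, which acts on $\mathbb{Q}(\zeta_{N'})$ as $\sigma_p$. Therefore $D=\{\sigma_t : t\equiv p^{s}\pmod{N'}\text{ for some } s\}$. As a check, its order is $\varphi(p^e)\cdot f$, since the class of $t$ modulo $p^e$ is free.

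\textbf{Step 2: conclude.} We have $\sigma_{-1}\in D$ if and only if $-1\equiv p^{s}\pmod{N'}$ for some $s$, which is exactly the definition of $p$ being self-conjugate mod $N$. If $N'=1$, the condition is vacuous and $D=G$, consistent with the remark in the definition.

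\textbf{Main obstacle.} The only delicate point is Step 1: justifying that $D$ is the full preimage of $\langle p\rangle\subseteq(\mathbb{Z}/N'\mathbb{Z})^{*}$ under the restriction map $G\to\mathrm{Gal}(\mathbb{Q}(\zeta_{N'})/\mathbb{Q})$. I would argue it as follows. First, $D$ contains $I=\ker(\text{restriction})$, because total ramification in $\mathbb{Q}(\zeta_{p^e})$ together with the unramifiedness of $\mathbb{Q}(\zeta_{N'})$ pins down the inertia field. Second, the image of $D$ in $\mathrm{Gal}(\mathbb{Q}(\zeta_{N'})/\mathbb{Q})$ is the decomposition group of $\mathfrak{P}\cap\mathbb{Q}(\zeta_{N'})$, and this is generated by Frobenius $\sigma_p$ there.
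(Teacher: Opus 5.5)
Your proof is correct. The paper gives no argument of its own and only cites Ikeda's Lemmas 4 and 5. Your route is the standard one behind that citation: identify the decomposition group as the full preimage of $\langle p \rangle \subseteq (\mathbb{Z}/N'\mathbb{Z})^{*}$ under restriction to $\mathbb{Q}(\zeta_{N'})$, then test whether $\sigma_{-1}$ lies in it. The degree count $[K:T]=|I|=\varphi(p^e)=[K:\mathbb{Q}(\zeta_{N'})]$, together with $\mathbb{Q}(\zeta_{N'})\subseteq T$, correctly pins down the inertia field and closes the one step you flagged as delicate.
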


\begin{lemma} \label{lemma_self_conj_reduce_one_prime_factor}
Let $N$ be an odd integer, $N=p^{e}N^{\prime}$, where $e>0$, $p$ is a prime and $p \nmid N^{\prime}$. Let $K=\mathbb{Q}(\zeta_N)$. Then $\gamma:=\sqrt{(-1)^{\frac{p-1}{2}} p} \in \mathcal{O}_K$ by fact \ref{QuadraticContained}. Assume that $p$ is self-conjugate $\mathrm{mod} \ N$. Let $\alpha \in \mathcal{O}_K$ such that $\alpha \overline{\alpha}={(2N)}^{n}$ where $n \geq 1$. Then $\beta:=\alpha / {\gamma}^{ne} \in \mathcal{O}_K$ and $\beta \overline{\beta}={(2N^{\prime})}^{n}$.
\end{lemma}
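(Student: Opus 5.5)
The plan is to argue with prime ideals: it suffices to show that $\gamma^{ne}$ divides $\alpha$ in $\mathcal{O}_K$. Since $\gamma$ is an algebraic integer with $\gamma^2=\pm p$, this divisibility gives $\beta\in\mathcal{O}_K$. The norm identity is then immediate: $\gamma\overline{\gamma}=|\gamma|^2=p$, so
\[
\beta\overline{\beta}=\frac{\alpha\overline{\alpha}}{(\gamma\overline{\gamma})^{ne}}=\frac{(2N)^n}{p^{ne}}=(2N^{\prime})^n.
\]

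First I would determine the ideal $\gamma\mathcal{O}_K$. By Fact \ref{factorization_rules_of_primes_in_cyclotomic_fields}, $p\mathcal{O}_K=(\mathfrak{p}_1\cdots\mathfrak{p}_g)^{\varphi(p^e)}$. Since $\gamma^2\mathcal{O}_K=p\mathcal{O}_K$ and $\varphi(p^e)=p^{e-1}(p-1)$ is even, unique factorization gives
\[
\gamma\mathcal{O}_K=(\mathfrak{p}_1\cdots\mathfrak{p}_g)^{\varphi(p^e)/2}.
\]
Hence $\gamma^{ne}\mathcal{O}_K=\prod_i\mathfrak{p}_i^{ne\varphi(p^e)/2}$. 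It therefore suffices to show $v_{\mathfrak{p}_i}(\alpha)\ge ne\varphi(p^e)/2$ for each $i$.

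Next I would use the self-conjugacy hypothesis. By Lemma \ref{DecompositionGroup}, complex conjugation lies in the decomposition group of $p$, so $\overline{\mathfrak{p}_i}=\mathfrak{p}_i$ for every $i$. Applying conjugation to $v_{\mathfrak{p}_i}$ gives $v_{\mathfrak{p}_i}(\overline{\alpha})=v_{\overline{\mathfrak{p}_i}}(\alpha)=v_{\mathfrak{p}_i}(\alpha)$. Now take valuations in $\alpha\overline{\alpha}=(2N)^n$. Since $p$ is odd, $v_{\mathfrak{p}_i}(2)=0$, and also $v_{\mathfrak{p}_i}(N^{\prime})=0$. This yields
\[
2v_{\mathfrak{p}_i}(\alpha)=n\,v_{\mathfrak{p}_i}(p^e)=ne\varphi(p^e),
\]
so $v_{\mathfrak{p}_i}(\alpha)=ne\varphi(p^e)/2$ exactly. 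Thus $\gamma^{ne}\mathcal{O}_K$ divides $\alpha\mathcal{O}_K$, so $\alpha/\gamma^{ne}$ has nonnegative valuation at every prime and lies in $\mathcal{O}_K$.

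The only substantive step is the equality $v_{\mathfrak{p}_i}(\alpha)=v_{\mathfrak{p}_i}(\overline{\alpha})$, which is exactly where self-conjugacy enters through Lemma \ref{DecompositionGroup}. Without it, the valuation of $\alpha$ at $p$ could be distributed unevenly between $\mathfrak{p}_i$ and $\overline{\mathfrak{p}_i}$, and the divisibility would fail.
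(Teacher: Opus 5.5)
Your argument is correct and is essentially the standard one the paper defers to (Ikeda, p.~118). Self-conjugacy makes every prime $\mathfrak{p}$ above $p$ stable under complex conjugation, so $v_{\mathfrak{p}}(\alpha)=v_{\mathfrak{p}}(\overline{\alpha})=\tfrac{1}{2}v_{\mathfrak{p}}\bigl((2N)^n\bigr)=v_{\mathfrak{p}}(\gamma^{ne})$, which gives $\gamma^{ne}\mid\alpha$, and $\gamma\overline{\gamma}=p$ then yields the norm identity; your write-up spells out, for general $n$, the valuation bookkeeping that the paper leaves to the reference.
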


\begin{proof}
See page 118 of \cite{Ikeda}. Note that while \cite{Ikeda} only proves the case of $n=1$, the general case is essentially the same.
\end{proof}

When all prime factors of $N$ are self-conjugate $\mathrm{mod} \ N$, we have the following lemma:

\begin{lemma} \label{lemma_self_conj_reduce_to_2}
Let $N=p_{1}^{e_1} p_{2}^{e_2} \cdots p_{s}^{e_s}$, where each $p_i$ is self-conjugate $\mathrm{mod} \ N$. Let $K=\mathbb{Q}(\zeta_N)$ and $\omega_N=\prod_{i=1}^{s} {\Big(\sqrt{(-1)^{\epsilon_i}p_i} \Big)}^{e_i}$, where $\epsilon_i=\frac{p_i-1}{2}$. Then $\omega_N \in \mathcal{O}_K$, $\omega_N \overline{\omega_N}=N$, $(\omega_N)=\overline{(\omega_N)}$ and ${(\omega_N)}^{2}= (\omega_N) \overline{(\omega_N)} =N \mathcal{O}_K$. Let $\alpha \in \mathcal{O}_K$ such that $\alpha \overline{\alpha}={(2N)}^{n}$, where $n \geq 1$. Then $\beta:=\frac{\alpha}{\omega_{N}^{n}} \in \mathcal{O}_K$ and $\beta \overline{\beta}=2^n$.
\end{lemma}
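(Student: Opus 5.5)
The plan is to iterate Lemma \ref{lemma_self_conj_reduce_one_prime_factor}, peeling off one prime factor at a time, and then verify separately the stated properties of $\omega_N$. First, $\omega_N \in \mathcal{O}_K$: by Fact \ref{QuadraticContained}, each $\sqrt{(-1)^{\epsilon_i}p_i}$ lies in $\mathbb{Q}(\zeta_{p_i}) \subseteq K$ and is an algebraic integer. Next, $\overline{\sqrt{(-1)^{\epsilon_i}p_i}} = \pm\sqrt{(-1)^{\epsilon_i}p_i}$, since the number is real when $p_i\equiv 1 \pmod 4$ and purely imaginary when $p_i \equiv 3 \pmod 4$. Hence $\overline{\omega_N}=\pm\omega_N$, which gives $(\omega_N)=\overline{(\omega_N)}$. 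Also $\omega_N\overline{\omega_N}=\prod_i p_i^{e_i}=N$, because $|\sqrt{(-1)^{\epsilon_i}p_i}|^2=p_i$. Therefore $(\omega_N)^2=(\omega_N)\overline{(\omega_N)}=(\omega_N\overline{\omega_N})=N\mathcal{O}_K$.

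For the main claim, I will induct on the number $s$ of prime factors. Write $N_0=N$ and $N_k=\prod_{i>k}p_i^{e_i}$, and set $\gamma_i=\sqrt{(-1)^{\epsilon_i}p_i}$. Suppose $\alpha_{k-1}\in\mathcal{O}_K$ satisfies $\alpha_{k-1}\overline{\alpha_{k-1}}=(2N_{k-1})^n$. To apply Lemma \ref{lemma_self_conj_reduce_one_prime_factor} with $p=p_k$ and $N'=N_k$, I need that lemma to hold in $K=\mathbb{Q}(\zeta_N)$ rather than in $\mathbb{Q}(\zeta_{N_{k-1}})$. I also need $p_k$ to be self-conjugate modulo $N_{k-1}$, but this follows at once from $p_k^t\equiv -1 \pmod{N/p_k^{e_k}}$, since $N_{k}$ divides $N/p_k^{e_k}$.

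The point I expect to require care is the choice of field. I will avoid the issue by not working inside the smaller field at all, and instead rerun the argument of Ikeda's lemma directly in $K$. Let $\mathfrak{P}$ be any prime of $K$ above $p_k$. By Lemma \ref{DecompositionGroup}, applied in $K$ with $N=p_k^{e_k}\cdot(N/p_k^{e_k})$, complex conjugation lies in the decomposition group of $p_k$, so $\overline{\mathfrak{P}}=\mathfrak{P}$. Then $v_{\mathfrak{P}}(\alpha_{k-1})=v_{\mathfrak{P}}(\overline{\alpha_{k-1}})$, so
\[
v_{\mathfrak{P}}(\alpha_{k-1})=\tfrac12 v_{\mathfrak{P}}\big((2N_{k-1})^n\big)=\tfrac{n}{2}e_k\,v_{\mathfrak{P}}(p_k)=n e_k\, v_{\mathfrak{P}}(\gamma_k),
\]
using $\gamma_k^2=\pm p_k$. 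At primes not above $p_k$, $\gamma_k$ is a unit. Hence $\alpha_k:=\alpha_{k-1}/\gamma_k^{ne_k}$ generates an integral ideal and so lies in $\mathcal{O}_K$. Moreover $\alpha_k\overline{\alpha_k}=(2N_{k-1})^n/p_k^{ne_k}=(2N_k)^n$.

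After $s$ steps we obtain $\alpha_s=\alpha/\prod_i\gamma_i^{ne_i}=\alpha/\omega_N^n=\beta\in\mathcal{O}_K$, with $\beta\overline{\beta}=2^n$. Alternatively, the whole statement follows in one stroke from the ideal identity $(\alpha)\overline{(\alpha)}=(2)^n(\omega_N)^{2n}$: the $\mathfrak{P}$-adic valuation comparison above, done simultaneously for all primes above all the $p_i$, shows that $(\omega_N)^n$ divides $(\alpha)$. I would present this direct version as the proof, since it cleanly sidesteps the question of which field Lemma \ref{lemma_self_conj_reduce_one_prime_factor} lives in.
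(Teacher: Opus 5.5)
Your proof is correct and is essentially the argument the paper defers to Ikeda (pp.~111 and 118). Self-conjugacy puts complex conjugation in the decomposition group of every prime $\mathfrak{P}$ above each $p_i$, so $v_{\mathfrak{P}}(\alpha)=v_{\mathfrak{P}}(\overline{\alpha})=\tfrac{n}{2}e_i\,v_{\mathfrak{P}}(p_i)=v_{\mathfrak{P}}(\omega_N^n)$, whence $(\omega_N)^n \mid (\alpha)$. You are also right that Lemma~\ref{lemma_self_conj_reduce_one_prime_factor} cannot be iterated verbatim, since its field is tied to its $N$; comparing valuations at all the $p_i$ simultaneously inside $K$ is a clean way around this.
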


\begin{proof}
See \cite{Ikeda}, page 111 (b) and page 118.
\end{proof}

\begin{lemma} \label{lemma_(1+rootofunity)_divides_prime_above_2}
Let $N$ be an odd integer and $K=\mathbb{Q}(\zeta_{N})$. Let $w$ be a root of unity in $K$. Then $w$ must be a $2N$-th root of unity by fact \ref{roots_of_unity_in_cyclotomic_fields}. If $1+w$ divides some prime ideal lying above $2$, then $w=\pm 1$.
\end{lemma}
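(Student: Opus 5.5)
The plan is to read the hypothesis as saying that some prime ideal $\mathfrak{P}$ of $\mathcal{O}_K$ lying above $2$ divides the ideal $(1+w)$, that is, $1+w\in\mathfrak{P}$. We then show directly that $1+w$ is coprime to $2$ whenever $w\neq\pm 1$. By fact \ref{roots_of_unity_in_cyclotomic_fields}, and since $N$ is odd, every root of unity in $K$ can be written as $w=\pm\zeta$ with $\zeta$ an $N$-th root of unity. Suppose $w\neq\pm1$. Then $\zeta\neq 1$, so $\zeta$ has some odd order $d>1$ with $d\mid N$. We split into the two signs.

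\emph{Case $w=\zeta$.} Since $d$ is odd, $\zeta^2$ is again a primitive $d$-th root of unity, so $\zeta^2=\sigma(\zeta)$ for some $\sigma\in\mathrm{Gal}(K/\mathbb{Q})$. Both $(1-\zeta^2)/(1-\zeta)=1+\zeta$ and its inverse $(1-\zeta)/(1-\zeta^2)$ are algebraic integers. For the inverse, write $\zeta=(\zeta^2)^k$ with $2k\equiv1 \pmod d$, so that $(1-\zeta)/(1-\zeta^2)=1+\zeta^2+\cdots+\zeta^{2(k-1)}$. Hence $1+\zeta$ is a unit of $\mathcal{O}_K$. A unit lies in no prime ideal, which contradicts $1+w\in\mathfrak{P}$.

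\emph{Case $w=-\zeta$.} Here $1+w=1-\zeta$. The product $\prod_{\gcd(k,d)=1}(1-\zeta^k)=\Phi_d(1)$ equals $p$ if $d$ is a power of a prime $p$, and equals $1$ otherwise. In either case it is odd, because $d$ is odd. The norm $N_{K/\mathbb{Q}}(1-\zeta)$ is a power of this product, so it is an odd integer. On the other hand, if $1-\zeta\in\mathfrak{P}$ then its norm, being a product of Galois conjugates of $1-\zeta$ one of which is $1-\zeta$ itself, also lies in $\mathfrak{P}\cap\mathbb{Z}=2\mathbb{Z}$. This is a contradiction.

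Both cases are contradictory, so $w=\pm1$. The only slightly delicate point is the unit argument in the first case, namely checking that $\zeta\mapsto\zeta^2$ preserves primitivity. This is exactly where the oddness of $N$ is used. The remaining steps are standard facts about cyclotomic units and norms.
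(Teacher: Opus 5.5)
Your proof is correct. You also read the awkwardly worded hypothesis the way the paper uses it: $1+w\in\mathfrak{P}$ for some prime $\mathfrak{P}\mid 2$. Under the literal reading $\mathfrak{P}\subseteq(1+w)$ the lemma would be false, since every unit $1+\zeta$ satisfies it.

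The paper gives no argument of its own here; it only points to page 115 of Ikeda. So your proof is a self-contained substitute rather than a variant of something in the text. Both of your cases check out:
\begin{itemize}
\item $1+\zeta=(1-\zeta^2)/(1-\zeta)$ is a cyclotomic unit, because squaring permutes the primitive $d$-th roots of unity when $d$ is odd.
\item $\mathrm{N}_{K/\mathbb{Q}}(1-\zeta)=\Phi_d(1)^{[K:\mathbb{Q}(\zeta)]}$ is odd.
\end{itemize}
One small correction to your closing remark: the oddness of $N$ is used in the second case as well (it is what rules out $\Phi_d(1)=2$), not only in the first.

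If you want to avoid the case split, work in the residue field $\mathcal{O}_K/\mathfrak{P}$, which has characteristic $2$. There $-1=1$, so $1+w\in\mathfrak{P}$ gives $\zeta\equiv 1 \pmod{\mathfrak{P}}$ whichever sign $w=\pm\zeta$ carries. Now $\prod_{j=1}^{N-1}(1-\zeta_N^{j})=N$ is odd, so no $1-\zeta_N^{j}$ with $N\nmid j$ lies in $\mathfrak{P}$. Hence $\zeta=1$ and $w=\pm1$. Put differently, the $N$-th roots of unity stay distinct modulo $\mathfrak{P}$ because $x^N-1$ is separable over $\mathbb{F}_2$.

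What each route buys:
\begin{itemize}
\item The residue-field argument is shorter and isolates the one fact that matters, namely that reduction modulo a prime above $2$ is injective on odd-order roots of unity.
\item Your route is slightly more informative. It shows that $(1+w)$ is either a unit or supported only above a single odd prime, which is more than the lemma needs.
\end{itemize}
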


\begin{proof}
See page 115 of \cite{Ikeda}.
\end{proof}

\begin{lemma} \label{criteria_for_A2}
    Let $K=\mathbb{Q}(\zeta_N)$ and $K^{+}=K \cap \mathbb{R}$, where $N$ is odd and has at least two distinct prime divisors. Let $u$ be a unit in $K^{+}$ and let $x=(1-\zeta_{N})(1-\zeta_{N}^{-1}) \in K^{+}$. Then $u=v \overline{v}$ for some unit $v$ in $K$ iff $\sqrt{u} \in K^{+}$ or $\sqrt{ux} \in K^{+}$.
\end{lemma}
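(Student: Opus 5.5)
The plan is to prove both directions directly, using two elementary facts: $1-\zeta_N$ is a unit, and every root of unity in $K$ is $\pm$ a square of a root of unity. Fact \ref{[A_2:A_1]} is not needed.

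First, since $N$ has at least two distinct prime divisors, $1-\zeta_N$ is a unit of $\mathcal{O}_K$. This is the standard fact that $\prod_{k}(1-\zeta_N^k)$ over primitive $k$ equals $\Phi_N(1)=1$. Hence $x=(1-\zeta_N)\overline{(1-\zeta_N)}$ is a unit of $K^+$ which is already a norm from $K$ to $K^+$ of a unit.

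Second, by Fact \ref{roots_of_unity_in_cyclotomic_fields} every root of unity in $K$ has the form $\pm\zeta_N^k$. Since $N$ is odd, $\zeta_N^k=\big(\zeta_N^{k(N+1)/2}\big)^2$. So every element of $W_K$ is $\pm\eta^2$ for some $\eta\in W_K$. In particular
$$\frac{1-\zeta_N}{1-\zeta_N^{-1}}=-\zeta_N=-\rho^2, \qquad \rho:=\zeta_N^{(N+1)/2}.$$

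For the direction ($\Leftarrow$), suppose $\sqrt{u}\in K^+$. Then $\sqrt{u}$ is a unit, because its square is, and $v=\sqrt u$ satisfies $v\overline v=u$. Now suppose instead $y=\sqrt{ux}\in K^+$. Then $y$ is a unit, and we take $v=y/\overline{(1-\zeta_N)}$. This is a unit with $v\overline v=y^2/x=u$.

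For the direction ($\Rightarrow$), let $v$ be a unit with $v\overline v=u$. The quotient $v/\overline v$ is an algebraic integer in $K$ of absolute value $1$. By Remark \ref{cyclo_absolute_value_1} all its conjugates have absolute value $1$, so by Fact \ref{conjugate_length1} it is a root of unity. Hence $v/\overline v=\pm\eta^2$ with $\eta\in W_K$.

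In the $+$ case, put $\varepsilon=v\eta^{-1}$. Then $\varepsilon/\overline\varepsilon=(v/\overline v)\eta^{-2}=1$, so $\varepsilon\in K^+$ and $u=v\overline v=\varepsilon\overline\varepsilon=\varepsilon^2$, giving $\sqrt u=\pm\varepsilon\in K^+$.

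In the $-$ case, replace $v$ by $v'=v/(1-\zeta_N)$. Then
$$\frac{v'}{\overline{v'}}=\frac{-\eta^2}{-\rho^2}=(\eta\rho^{-1})^2.$$
As in the $+$ case, $\varepsilon:=v'\rho\eta^{-1}$ is real. Then $u=v\overline v=v'\overline{v'}\,x=\varepsilon^2x$, so $ux=(\varepsilon x)^2$ and $\sqrt{ux}=\pm\varepsilon x\in K^+$.

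The only step needing care is the $-$ case. The sign $-1$ cannot be absorbed into a square of a root of unity, because $-1$ is not a square in $W_K$ when $N$ is odd. Dividing by $1-\zeta_N$ fixes this precisely because $(1-\zeta_N)/\overline{(1-\zeta_N)}=-\zeta_N$ carries the same sign. This is also where the hypothesis that $N$ is not a prime power enters, since it is what makes $1-\zeta_N$ a unit.
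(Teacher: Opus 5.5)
Your proof is correct, and it takes a different route from the paper's.

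The paper argues at the level of groups. It cites Fact~\ref{[A_2:A_1]} to get $[U_K:W_KU_{K^+}]=2$. Via the norm map $v\mapsto v\overline{v}$ it transfers this to $[A_2:A_1]=2$, where $A_1=\{w^2 : w\in U_{K^+}\}$ and $A_2=\{v\overline{v} : v\in U_K\}$. It then checks $x\in A_2\setminus A_1$, using that $-\zeta_N^{-1}$ is not a square in $K$, and concludes $A_2=A_1\cup xA_1$.

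You instead inline the mechanism behind that index computation. The map $v\mapsto v/\overline{v}$ lands in $W_K$, and for odd $N$ the quotient $W_K/W_K^2$ has order $2$, represented by $\pm1$. So a unit can be twisted by a root of unity into $K^+$, either directly or after dividing by $1-\zeta_N$, whose image $-\zeta_N$ carries the sign $-1$.

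What your route buys:
\begin{itemize}
\item It is self-contained: you need only $\Phi_N(1)=1$ and Fact~\ref{conjugate_length1}.
\item Since the statement is only an ``or'', you never need to show that $x$ is not a square.
\item Your choice $v=\sqrt{ux}/\overline{(1-\zeta_N)}$ satisfies $v\overline{v}=u$ directly. In Remark~\ref{remark_construction_of_v} the paper takes $v=\sqrt{u(-\zeta_N)}$, which is the same element up to sign since your $v$ satisfies $v^2=-u\zeta_N$, but it then needs a separate positivity argument to rule out $v\overline{v}=-u$.
\end{itemize}

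What the paper's route buys is the coset description itself. It shows that exactly one of the two conditions holds for a norm, and that $x$ represents the nontrivial class.
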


\begin{proof}
Let $U_{K}$ denote the unit group of $\mathcal{O}_{K}$ and let $U_{K^{+}}$ denote the unit group of $\mathcal{O}_{K^{+}}$. Then $[U_{K}:W_K U_{K^{+}}]=2$ by fact \ref{[A_2:A_1]}, and for simplicity we write $W_K=W$. Let $A_{1}=\{ u^2 \ | \ u \in U_{K^{+}} \}$ and $A_{2}=\{ v \overline{v} \ | \ v \in U_{K} \}$. Consider $\mathrm{N}: U_K \rightarrow A_{2}$ which maps $v \in U_K$ to $v \overline{v}$, i.e. the restriction of the norm map $\mathrm{N}_{K / K^{+}}$ on $U_K$. Then $\mathrm{N}$ is surjective and $WU_{K^{+}} \subseteq \mathrm{N}^{-1}(A_1)$. On the other hand, suppose $y \in \mathrm{N}^{-1}(A_1)$, then $y \overline{y}=u^2$ for some $u \in U_{K^{+}}$, so $\frac{y}{u} \cdot \overline{(\frac{y}{u})}=1$ and $\frac{y}{u}$ is a root of unity by fact \ref{conjugate_length1} (and remark \ref{cyclo_absolute_value_1}, which we omit in the following literature). Thus $y \in WU_{K^{+}}$ and $\mathrm{N}^{-1}(A_1)=WU_{K^{+}}$. It is easy to verify that $\ker{N}=W$ using fact \ref{conjugate_length1}, so $\mathrm{N}$ induces isomorphisms $U_{K} / W \simeq A_{2}$ and $WU_{K^{+}} / W \simeq A_1$. Hence $[A_{2} : A_{1}]=[U_K : WU_{K^{+}}]=2$. By \cite[Prop 2.8]{IntroductionToCyclotomicFields}, $1-\zeta_{N}$ is a unit in $K$, so $x=(1-\zeta_{N})(1-\zeta_{N}^{-1})$ is a unit in $K^{+}$. Moreover, $x=(1-\zeta_{N})(1-\zeta_{N}^{-1})=(-\zeta_{N}^{-1}) \cdot {(1-\zeta_{N})}^{2}$, so it is not a square since if it were, then $-\zeta_{N}^{-1}$ would be a square. But $-\zeta_{N}^{-1}$ is a primitive $2N$-th root of unity, so there would exist a primitive $4N$-th root of unity in $K$, a contradiction to fact \ref{roots_of_unity_in_cyclotomic_fields}. Hence $A_{2}=A_{1} \cup x A_{1}$ and it follows that $u=v \overline{v}$ implies that $\sqrt{u} \in K^{+}$ or $\sqrt{ux} \in K^{+}$. On the other hand, if $u \in U_{K^{+}}$ and $\sqrt{u} \in K^{+}$, then it follows that $\sqrt{u} \in U_{K^{+}}$ and $u \in A_{1} \subseteq A_{2}$. A similar argument applies for the case $\sqrt{ux} \in K^{+}$ and this completes the proof of the lemma. 
\end{proof}

\begin{remark} \label{remark_construction_of_v}
Let the notations be the same as lemma \ref{criteria_for_A2}. Then if $u \in U_{K^{+}}$ and $u=v \overline{v}$ for some $v \in U_K$, one may take $v=\sqrt{u}$ if $\sqrt{u} \in K^{+}$ and take $v=\sqrt{u(-\zeta_{N})}$ if $\sqrt{ux} \in K^{+}$. 
\end{remark}

\begin{proof}
It is obvious that one may take $v=\sqrt{u}$ if $\sqrt{u} \in K^{+}$. On the other hand, if $\sqrt{ux}=\sqrt{u \cdot (-\zeta_{N}^{-1}) \cdot {(1-\zeta_{N})}^{2}} \in K^{+}$, then $\sqrt{u \cdot (-\zeta_{N}^{-1})} \in U_{K}$, since it is in $\mathcal{O}_K$ and is a unit. Hence $\sqrt{u \cdot (-\zeta_{N})} \in U_{K}$. Let $v=\sqrt{u(-\zeta_{N})}$ and it suffices to show that $v \overline{v}=u$. Note that $v^2=-u \zeta_{N}$, so $\overline{v}^{2}=-u \zeta_{N}^{-1}$ and ${(v \overline{v})}^2=u^2$, hence $u=v \overline{v}$ or $u=-v \overline{v}$. Since we already know that $u \in \mathbb{R}$ is the norm of some unit in $K$, and hence is positive, we see that $u=v \overline{v}$ is the only possibility, which completes the proof.
\end{proof}

\begin{lemma} \label{lemma_solution_is_unique_up_to_roots_of_unity}
Let $K=\mathbb{Q}(\zeta_{N})$ be an arbitrary cyclotomic field and $K^{+}=K \cap \mathbb{R}$. Let $u$ be a unit in $K^{+}$ such that $u=v \overline{v}$ for some unit $v \in U_K$. Let $x \in U_K$. Then $x \overline{x}=u$ iff $x=v \cdot w$ for some $w \in W_K$.
\end{lemma}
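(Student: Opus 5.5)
The plan is to prove the two directions separately. The backward direction is a direct computation. The forward direction reduces, via the fact that $v$ is a unit, to showing that an element of absolute value $1$ in $\mathcal{O}_K$ is a root of unity.

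\emph{Backward direction.} Suppose $x=v\cdot w$ with $w\in W_K$. Since $w$ is a root of unity, $|w|=1$, so $w\overline{w}=1$. Hence
\[
x\overline{x}=v\overline{v}\,w\overline{w}=v\overline{v}=u.
\]
Moreover $x$ is a unit, being a product of units.

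\emph{Forward direction.} Suppose $x\in U_K$ satisfies $x\overline{x}=u$. Since $v$ is a unit, $w:=x/v$ lies in $U_K\subseteq\mathcal{O}_K$, and
\[
w\overline{w}=\frac{x\overline{x}}{v\overline{v}}=\frac{u}{u}=1 .
\]
So $w$ has absolute value $1$ under the fixed embedding $K\subseteq\mathbb{C}$. We then need this for every conjugate of $w$. Let $\sigma\in\mathrm{Gal}(K/\mathbb{Q})$. Because the Galois group is abelian, $\sigma$ commutes with complex conjugation, which is itself an element of the Galois group. Therefore $\sigma(w)\overline{\sigma(w)}=\sigma(w\overline{w})=1$, as in remark \ref{cyclo_absolute_value_1}. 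By fact \ref{conjugate_length1}, $w$ is a root of unity, so $w\in W_K$ and $x=v\cdot w$.

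The only point needing any care is this passage from $|w|=1$ to $|\sigma(w)|=1$ for all $\sigma$, and it is handled by remark \ref{cyclo_absolute_value_1}. I do not expect a real obstacle, and note that no hypothesis on $N$ is needed.
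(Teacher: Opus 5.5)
Your proof is correct and follows the paper's own argument: the backward direction is a direct computation, and for the forward direction you note that $x/v\in\mathcal{O}_K$ satisfies $\frac{x}{v}\cdot\overline{\left(\frac{x}{v}\right)}=1$ and apply fact \ref{conjugate_length1} together with remark \ref{cyclo_absolute_value_1}. Your explicit check that every Galois conjugate of $w$ also has absolute value $1$ is a detail the paper leaves implicit.
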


\begin{proof}
It is easy to verify that if $x=v \cdot w$ then $x \overline{x}=u$. On the other hand, if $x \overline{x}=u=v \overline{v}$, then $\frac{x}{v} \cdot \overline{(\frac{x}{v})}=1$. Since $\frac{x}{v} \in \mathcal{O}_K$, by fact \ref{conjugate_length1} we see that $x=v \cdot w$ for some $w \in W_K$, which completes the proof.
\end{proof}

\begin{lemma} \cite[Lemma 1]{Ikeda} \label{lemma_dot_product_of_Fouriercoefficients}
Let $f$ be a type $[n,q]$ GBF. Then $\forall \ 0 \neq v \in \mathbb{Z}_{q}^{n}$, $\displaystyle\sum_{\lambda \in \mathbb{Z}_{q}^{n}} \alpha_{f}(\lambda) \cdot \overline{\alpha_{f}(\lambda+v)}=0$.
\end{lemma}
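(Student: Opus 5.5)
The statement to prove is Lemma \ref{lemma_dot_product_of_Fouriercoefficients}: for a GBF $f$ and every $0\neq v\in\mathbb{Z}_q^n$, $\sum_{\lambda}\alpha_f(\lambda)\overline{\alpha_f(\lambda+v)}=0$. The plan is to expand both Fourier coefficients from the definition and use orthogonality of characters. Bentness itself is not needed for this identity; it holds for any $f$.

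First, write
$$\alpha_f(\lambda)\overline{\alpha_f(\lambda+v)}=\sum_{x,y\in\mathbb{Z}_q^n}\zeta_q^{f(x)-f(y)}\,\zeta_q^{-\lambda\cdot x+(\lambda+v)\cdot y}.$$
Then sum over $\lambda$ and interchange the finite sums. This gives
$$\sum_{x,y}\zeta_q^{f(x)-f(y)+v\cdot y}\sum_{\lambda}\zeta_q^{\lambda\cdot(y-x)}.$$
The inner sum is the standard character sum on $\mathbb{Z}_q^n$. It equals $q^n$ if $y=x$ and $0$ otherwise. To check this, pick a coordinate $j$ with $y_j-x_j\neq 0$ in $\mathbb{Z}_q$. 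The sum then factors, and the $j$-th factor is $\sum_{t\in\mathbb{Z}_q}\zeta_q^{t c}$ with $c\not\equiv 0 \pmod q$, which vanishes because it is a geometric sum of a nontrivial root of unity.

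After this, only the diagonal terms $x=y$ survive, and the expression becomes
$$q^n\sum_{x}\zeta_q^{v\cdot x}.$$
Since $v\neq 0$, the same character-sum fact (the sum of a nontrivial character $x\mapsto\zeta_q^{v\cdot x}$ over the group $\mathbb{Z}_q^n$) shows this is $0$. That completes the argument.

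There is no real obstacle. The only point needing care is the character-sum evaluation over $\mathbb{Z}_q^n$ for composite $q$: the relevant condition is that some coordinate is nonzero modulo $q$, not that it is a unit.
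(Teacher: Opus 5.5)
Your proof is correct. The paper gives no argument of its own; it simply cites Ikeda's Lemma~1, and your computation is the standard one: expand both coefficients, use character orthogonality $\sum_{\lambda}\zeta_q^{\lambda\cdot(y-x)}=q^n\delta_{x,y}$ to collapse to the diagonal, then use $\sum_{x}\zeta_q^{v\cdot x}=0$ for $v\neq 0$.

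You are also right that bentness is never used, so the identity holds for every $f:\mathbb{Z}_q^n\to\mathbb{Z}_q$. Bentness enters only later, in Section~\ref{section_extending_EPM}, where the identity is combined with $\alpha_f(\lambda)\overline{\alpha_f(\lambda)}=q^n$.
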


\section{A sketch of the element partition method} \label{section_review_EPM}

The idea of the element partition method can be dated back to \cite{Ikeda}. As mentioned in section \ref{section_introduction}, Jiang and Deng \cite{Jiang_Deng} extended the idea in \cite{Ikeda} to prove the nonexistence of type $[3,2 \cdot 23^{e}]$ GBF, and their result was further extended by Li and Deng in \cite{Li_Deng} to the nonexistence of type $[n, 2p^{e}]$ GBF, where $p$ is a prime such that $p \equiv 7 \ (\mathrm{mod} \ 8)$ and $2$ has multiplicative order $\frac{1}{2} \varphi(p^e)$ in $(\mathbb{Z} / p^{e} \mathbb{Z})^{*}$; $n$ is the order of $[\mathfrak{p}]$ in $\mathrm{Cl}(\mathbb{Q}(\sqrt{-p}))$, where $\mathfrak{p}$ is a prime lying above $2$ in $\mathbb{Q}(\sqrt{-p})$ and $[\mathfrak{p}]$ denotes its ideal class; $e \geq 1$. Note that when $2^{p-1} \nequiv 1 \ (\mathrm{mod} \ p^2)$ (which almost always happens, see remark \ref{Wieferich_primes_are_rare}), the condition of $\mathrm{ord}_{p^{e}}(2)=\frac{1}{2}\varphi(p^e)$ is equivalent to $2$ having multiplicative order $\frac{p-1}{2} \ \mathrm{mod} \ p$ by lemma \ref{lemma_order_mod_p^s}. See remark \ref{remark_density_near_primitive_root} for an analysis of the natural density of such primes under GRH. Lv and Li introduced the name of the element partition method in \cite{Lv_Li} and proved some other nonexistence results. In this section we give a brief review of the element partition method and restate the two conditions to be checked when applying the element partition method. The results in section \ref{direct_consequence_of_EPM} and section \ref{section_scattered_results} are obtained by checking these two conditions.

Let $f: \mathbb{Z}_{q}^{n} \rightarrow \mathbb{Z}_{q}$ be a GBF, where $2 \nmid n$ and $q \equiv 2 \ (\mathrm{mod} \ 4)$. Let $q=2N$ and $K=\mathbb{Q}(\zeta_N)$. Assume that all the prime factors of $N$ are self-conjugate $\mathrm{mod} \ N$. \footnote{Only in section \ref{section_extending_EPM} do we deal with cases where this condition does not hold.}

We will need the following definition in the context.

\begin{definition}
Let $K$ be a number field. For an ideal $I$ in $\mathcal{O}_K$, we define its support to be the set of prime ideals $\mathfrak{p} \subseteq \mathcal{O}_K$ such that $\mathfrak{p} \ | \ I$. For a rational prime $p$, we define the $p$-support of $I$ to be the set of prime ideals $\mathfrak{p} \subseteq \mathcal{O}_K$ such that $\mathfrak{p}$ lies above $p$ and $\mathfrak{p} \ | \ I$.
\end{definition}

The element partition method considers the relations between $\alpha_{f}(\lambda)$ and $\alpha_{f}(\lambda+v)$, where $\alpha_{f}(\lambda)=\displaystyle\sum_{x \in \mathbb{Z}_{q}^{n}} \zeta_{q}^{f(x)-\lambda \cdot x}$, $\lambda \in \mathbb{Z}_{q}^{n}$ and $v \in \mathbb{Z}_{q}^{n}$ has order $2$.  Note that 
$$\alpha_{f}(\lambda)+\alpha_{f}(\lambda+v)=2 \sum_{x \in \mathbb{Z}_{q}^{n}, x \cdot v=0} \zeta_{q}^{f(x)-\lambda \cdot x},$$
so $(\alpha_{f}(\lambda))$ and $(\alpha_{f}(\lambda+v))$ have the same $2$-support. (This is because: For any prime ideal $\mathfrak{p}$ lying above $2$, $\alpha_{f}(\lambda) \in \mathfrak{p}$ iff $\alpha_{f}(\lambda+v) \in \mathfrak{p}$ since $2 \sum_{x \in \mathbb{Z}_{q}^{n}, x \cdot v=0} \zeta_{q}^{f(x)-\lambda \cdot x} \in \mathfrak{p}$.) 

Let $\omega_N$ be as in lemma \ref{lemma_self_conj_reduce_to_2}. Let $\beta(\lambda)=\frac{\alpha_{f}(\lambda)}{\omega_{N}^{n}}$, $\beta(\lambda+v)=\frac{\alpha_{f}(\lambda+v)}{\omega_{N}^{n}}$. Since $\alpha_{f}(\lambda)$ and $\alpha_{f}(\lambda+v)$ are solutions to $x \in \mathcal{O}_K, \ x \overline{x}={(2N)}^n$ and each prime divisor of $N$ is self-conjugate $\mathrm{mod} \ N$, by lemma \ref{lemma_self_conj_reduce_to_2} we have that $(\omega_{N})^{n} \ | \ \big( \alpha_{f}(\lambda) \big)$, $(\omega_{N})^{n} \ | \ \big( \alpha_{f}(\lambda+v) \big)$; $\beta(\lambda) \overline{\beta(\lambda)}=2^n$, $\beta(\lambda+v) \overline{\beta(\lambda+v)}=2^n$. Moreover, $\big(\beta(\lambda)\big)$ and $\big(\beta(\lambda+v)\big)$ have same $2$-support since $\big(\alpha_{f}(\lambda)\big)$ and $\big(\alpha_{f}(\lambda+v)\big)$ does. Therefore, if the following condition 

\begin{condition} \label{condition_1}
$\gamma_1, \gamma_2 \in \mathcal{O}_K$; $\gamma_1 \overline{\gamma_1}=\gamma_2 \overline{\gamma_2}=2^{n}$; $(\gamma_1)$ and $(\gamma_2)$ have the same support $\implies$ $(\gamma_1)=(\gamma_2)$ 
\end{condition}

holds, then $(\alpha_{f}(\lambda))=(\alpha_{f}(\lambda+v))$ for all $\lambda$ and $v$.

Assuming condition \ref{condition_1}, we have that $\alpha_{f}(\lambda)=u \cdot \alpha_{f}(\lambda+v)$ for some unit $u \in U_K$. Then since $\alpha_{f}(\lambda)$ and $\alpha_{f}(\lambda+v)$ have the same absolute value, by fact \ref{conjugate_length1} we have that $u$ is a root of unity. Since $2 \ | \ \Big( \alpha_{f}(\lambda)+\alpha_{f}(\lambda+v) \Big)=(1+u) \cdot \alpha_{f}(\lambda)$, if the following condition

\begin{condition} \label{condition_2}
$\gamma \in \mathcal{O}_K$, $\gamma \overline{\gamma}=2^{n}$ $\implies$ $2 \nmid \gamma$
\end{condition}

holds, then $2 \nmid \beta(\lambda)$, so $2 \nmid \alpha_{f}(\lambda)$, and hence $1+u$ must divide some prime ideal lying above $2$. By lemma \ref{lemma_(1+rootofunity)_divides_prime_above_2}, this implies that $u=\pm 1$. Thus we have the following important proposition:

\begin{prop}
For all $\lambda \in \mathbb{Z}_{q}^{n}$ and $v \in \mathbb{Z}_{q}^{n}$ of order $2$, $\alpha_{f}(\lambda)=\pm \alpha_{f}(\lambda+v)$.
\end{prop}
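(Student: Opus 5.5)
The proposition is stated under the standing hypotheses of this section: $f$ is a GBF of type $[n,2N]$ with $n$ odd, every prime factor of $N$ is self-conjugate $\mathrm{mod}\ N$, and conditions \ref{condition_1} and \ref{condition_2} hold. The plan is to assemble the discussion preceding the statement into a single argument. Throughout, fix $\lambda\in\mathbb{Z}_q^n$ and $v$ of order $2$, so $v=N v'$ with each coordinate of $v$ in $\{0,N\}$. Then $\zeta_q^{-v\cdot x}=(-1)^{v'\cdot x}$, and hence
$$\alpha_f(\lambda)+\alpha_f(\lambda+v)=2\sum_{x:\,v\cdot x=0}\zeta_q^{f(x)-\lambda\cdot x}.$$
Since $q=2N$ with $N$ odd, $\zeta_q=-\zeta_N^{(N+1)/2}\in K$, so all these sums lie in $\mathcal{O}_K$. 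Consequently $2$ divides $\alpha_f(\lambda)+\alpha_f(\lambda+v)$ in $\mathcal{O}_K$.

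\textbf{Step 1: the ideals coincide.} Take any prime $\mathfrak{p}$ above $2$. By the displayed identity, $\mathfrak{p}\mid\alpha_f(\lambda)$ if and only if $\mathfrak{p}\mid\alpha_f(\lambda+v)$, so the two elements have the same $2$-support. Lemma \ref{lemma_self_conj_reduce_to_2} gives $\beta(\lambda)=\alpha_f(\lambda)/\omega_N^n\in\mathcal{O}_K$ with $\beta(\lambda)\overline{\beta(\lambda)}=2^n$, and similarly for $\beta(\lambda+v)$. Because $(\omega_N)^2=N\mathcal{O}_K$ is coprime to $2$, dividing by $\omega_N^n$ does not change $2$-supports. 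Moreover, every prime dividing $\beta(\cdot)$ divides $2^n$, so the full support equals the $2$-support. Condition \ref{condition_1} then yields $(\beta(\lambda))=(\beta(\lambda+v))$, hence $(\alpha_f(\lambda))=(\alpha_f(\lambda+v))$. So $\alpha_f(\lambda)=u\,\alpha_f(\lambda+v)$ for some unit $u\in U_K$.

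\textbf{Step 2: $u$ is a root of unity.} Both $\alpha_f(\lambda)$ and $\alpha_f(\lambda+v)$ have absolute value $\sqrt{q^n}\neq 0$, so $|u|=1$. By remark \ref{cyclo_absolute_value_1} and fact \ref{conjugate_length1}, $u\in W_K$, and by fact \ref{roots_of_unity_in_cyclotomic_fields} it is a $2N$-th root of unity.

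\textbf{Step 3: $u=\pm1$.} This is the only step needing care. We have $2\mid(1+u)\alpha_f(\lambda+v)$. By condition \ref{condition_2}, $2\nmid\beta(\lambda+v)$, and since $\omega_N$ is coprime to $2$, also $2\nmid\alpha_f(\lambda+v)$. Hence some prime $\mathfrak{p}$ above $2$ satisfies $v_{\mathfrak{p}}(\alpha_f(\lambda+v))<v_{\mathfrak{p}}(2)$. Note that $2$ is unramified in $K$ because $N$ is odd, so $v_{\mathfrak{p}}(2)=1$. It follows that $v_{\mathfrak{p}}(\alpha_f(\lambda+v))=0$, and therefore $\mathfrak{p}\mid 1+u$. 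Lemma \ref{lemma_(1+rootofunity)_divides_prime_above_2} now forces $u=\pm1$, i.e. $\alpha_f(\lambda)=\pm\alpha_f(\lambda+v)$.

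The main obstacle is precisely this use of unramifiedness: it is what turns the global statement $2\nmid\alpha_f(\lambda+v)$ into the existence of a prime $\mathfrak{p}\mid 2$ coprime to $\alpha_f(\lambda+v)$, which in turn must divide $1+u$. Once that is checked, the rest follows directly from the cited lemmas.
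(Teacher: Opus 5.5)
Your proof is correct and follows the paper's own argument step for step. Like the paper, you use the identity for $\alpha_f(\lambda)+\alpha_f(\lambda+v)$ to get equal $2$-supports, pass to $\beta=\alpha_f/\omega_N^n$ and apply condition 1 to get equal ideals, use the absolute-value argument to see the quotient is a root of unity, and finish with condition 2 and the lemma on $1+w$. One small remark: Step 3 does not need $2$ to be unramified, since for any $\mathfrak p$ with $v_{\mathfrak p}(\alpha_f(\lambda+v))<v_{\mathfrak p}(2)$ you already get $v_{\mathfrak p}(1+u)\ge v_{\mathfrak p}(2)-v_{\mathfrak p}(\alpha_f(\lambda+v))>0$, so that step is less delicate than you suggest.
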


When this proposition and condition \ref{condition_2} hold, we may deduce a contradiction concerning the size of the set $\{ \lambda \in \mathbb{Z}_{q}^{n} \ | \ \alpha_{f}(\lambda)=\alpha_{f}(\lambda+v), \forall \ v \ \text{of order} \ 2 \}$, and thus prove the nonexistence of type $[n,q]$ GBF. See \cite{Li_Deng} or \cite{Lv_Li} for the details.

Therefore, in order to apply the element partition method to prove nonexistence results of type $[n,q]$ GBF, it suffices to check that condition \ref{condition_1} and \ref{condition_2} hold for the corresponding $n$ and $q$.

\begin{remark} \label{remark_min_solution}
Note that if there exists $\alpha \in \mathcal{O}_K$ such that $\alpha \overline{\alpha}=2^{n}$, then $(2 \alpha)(2 \overline{\alpha})=2^{n+2}$. Thus, the maximal odd $n$ such that condition \ref{condition_2} holds is the minimal odd $n$ such that $\alpha \overline{\alpha}=2^n$ for some $\alpha \in \mathcal{O}_K$. (Note that for $t$ strictly smaller than the number mentioned above, there does not exist $\alpha \in \mathcal{O}_K$ such that $\alpha \overline{\alpha}=2^t$, and hence the LHS of condition \ref{condition_2} is empty for such $t$.)
\end{remark}

\section{Some nonexistence results based on the element partition method} \label{direct_consequence_of_EPM}

As mentioned in section \ref{section_review_EPM}, we prove some nonexistence results by checking condition \ref{condition_1} and \ref{condition_2} in this section.

While we omit the exact value of the upper bound $M$ when we state result \ref{result_Feng} in section \ref{section_introduction}, we note that, when $p_1 \equiv 3 \ (\mathrm{mod} \ 8)$, following the notations of theorem 4.1 of \cite{FKQ_earliest}, let $m$ be the smallest positive integer such that $p_1 A^2+p_2 B^2=2^{m+2}$ has an integral solution $(A,B)$, $f$ the order of $2 \ \mathrm{mod} \ N$, $g=\frac{\varphi(N)}{f}$ and $s=\frac{g}{2}$, then $M=\frac{m}{s}$. This, together with the result \ref{result_Feng} we stated in section \ref{section_introduction}, gives a complete restatement of case (1), theorem 4.1 of \cite{FKQ_earliest}, which is generalized by the following theorem:

\begin{theorem} \label{thm_two_primes_3_and_5}
Let $N=p_{1}^{e_1} p_{2}^{e_2}$, where $p_1 \equiv 3 \ (\mathrm{mod} \ 8)$ and $p_2 \equiv 5 \ (\mathrm{mod} \ 8)$ are two primes satisfying the following conditions:

\begin{enumerate}
    \item $(\frac{p_1}{p_2})=-1$, which is equivalent to both $p_1$ and $p_2$ are self-conjugate $\mathrm{mod} \ N$.
    \item The multiplicative order of $2$ in ${(\mathbb{Z} / N \mathbb{Z})}^{*}$ is $\frac{\varphi(N)}{2}$, or $s=1$ by the notation of theorem 4.1 of \cite{FKQ_earliest}. Note that this is satisfied iff $2$ is a primitive root of both $p_1$ and $p_2$ and $\gcd((p_1-1) p_{1}^{e_1-1},(p_2-1)p_{2}^{e_2-1})=2$, provided that $p_1$ and $p_2$ are not Wieferich primes. On the other hand, the condition $(\frac{p_1}{p_2})=-1$ shows that $p_1 \nequiv 1 \pmod{p_2}$ and $p_2 \nequiv 1 \pmod{p_1}$, so the condition can be simplified to $\gcd(\frac{p_1-1}{2},p_2-1)=1$. Also note that under this condition, the decomposition field of $2$ in $K=\mathbb{Q}(\zeta_N)$ is $\mathbb{Q}(\sqrt{-p_1 p_2})$.
\end{enumerate}

Let $m$ be the smallest positive integer such that $p_1 A^2+p_2 B^2=2^{m+2}$ has an integral solution $(A,B)$. Then by theorem 4.1 of \cite{FKQ_earliest}, $m$ is odd, and we have that:
 
There is no GBF of type $[m,2N]$.
\end{theorem}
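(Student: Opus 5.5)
The plan is to verify Conditions \ref{condition_1} and \ref{condition_2} for $n=m$ and $K=\mathbb{Q}(\zeta_N)$. The element partition method of section \ref{section_review_EPM} then gives the proposition $\alpha_f(\lambda)=\pm\alpha_f(\lambda+v)$, and the counting argument of \cite{Li_Deng,Lv_Li} rules out a GBF of type $[m,2N]$. Both primes are self-conjugate mod $N$ by hypothesis 1, so Lemma \ref{lemma_self_conj_reduce_to_2} applies and everything reduces to the equation $\gamma\overline{\gamma}=2^m$ in $\mathcal{O}_K$.

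\textbf{Step 1: descend to the decomposition field.} By hypothesis 2 the order of $2$ modulo $N$ is $f=\varphi(N)/2$, so by Fact \ref{factorization_rules_of_primes_in_cyclotomic_fields} we have $2\mathcal{O}_K=\mathfrak{P}\overline{\mathfrak{P}}$. These are two distinct primes: $2$ is not self-conjugate, since otherwise the decomposition group would contain complex conjugation by Lemma \ref{DecompositionGroup}, but that group is the order-$f$ subgroup $\langle 2\rangle$ with fixed field $L=\mathbb{Q}(\sqrt{-p_1p_2})$. Since $-p_1p_2\equiv 1 \pmod 8$, $2$ splits in $L$ as $\mathfrak{p}\overline{\mathfrak{p}}$, and $\mathfrak{p}$ is inert in $K/L$ with $\mathfrak{p}\mathcal{O}_K=\mathfrak{P}$.

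Let $\gamma\overline{\gamma}=2^n$. Then $(\gamma)=\mathfrak{P}^a\overline{\mathfrak{P}}^{n-a}$ for some $0\le a\le n$. Condition \ref{condition_1} then asks that $(\gamma)$ be determined by its support. This holds automatically when the support is a single prime. When both primes divide, it requires that the exponent $a$ be forced.

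To control $a$, I would take norms $N_{K/L}$. This gives $\mathfrak{p}^{fa}\overline{\mathfrak{p}}^{f(n-a)}$ principal in $L$ with generator $N_{K/L}\gamma$, and that generator has norm $2^{nf}$ to $\mathbb{Q}$. Writing it as $(x+y\sqrt{-p_1p_2})/2$ relates $a$ to the order of $[\mathfrak{p}]$ in $\mathrm{Cl}(L)$. Feng's Theorem 4.1 links this order to $m$: solutions of $p_1A^2+p_2B^2=2^{m+2}$ correspond to $\sqrt{-p_1}A+\sqrt{-p_2}B$-type elements of $\mathcal{O}_K$ of absolute value $2^{(m+2)/2}\sqrt{\cdots}$. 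The main intended claim is that the minimal $n$ with $\mathfrak{P}^a\overline{\mathfrak{P}}^{n-a}$ principal and of absolute value $2^{n/2}$ is $m$. It is realized only with $\{a,n-a\}$ a fixed pair $\{a_0,m-a_0\}$ with $a_0\ne m-a_0$, since $m$ is odd.

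\textbf{Step 2: verify both conditions at $n=m$.} For Condition \ref{condition_2}: if $2\mid\gamma$, then $\gamma/2$ satisfies the equation with exponent $m-2$, contradicting the minimality in Remark \ref{remark_min_solution}. For Condition \ref{condition_1}: if both $\gamma_1,\gamma_2$ have full support, then their exponents $a$ are constrained to $a_0$ or $m-a_0$. If they differed, $\gamma_1/\gamma_2$ would generate $\mathfrak{P}^{2a_0-m}\overline{\mathfrak{P}}^{m-2a_0}$. Combining $\gamma_1\overline{\gamma_2}$ or similar products then yields a principal ideal $\mathfrak{P}^{c}\overline{\mathfrak{P}}^{c'}$ with $c+c'<m$ odd and the right absolute value, contradicting minimality. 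The case where only $\mathfrak{P}^m$ appears alone is handled by noting it has single support, so equality of ideals is immediate.

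\textbf{Main obstacle.} The hardest part is the units/absolute value subtlety. A principal ideal $\mathfrak{P}^a\overline{\mathfrak{P}}^{n-a}=(\delta)$ gives $\delta\overline{\delta}=2^n u$ with $u\in U_{K^+}$, and one must know whether $u$ is a norm $v\overline{v}$ from $U_K$. I would handle this with Lemma \ref{criteria_for_A2} and Remark \ref{remark_construction_of_v}, or avoid it by working with norms down to $L$, where the unit group is $\{\pm1\}$. I would also rely on Feng's analysis in \cite{FKQ_earliest}, which already establishes that $m$ is the minimal odd exponent with $\gamma\overline{\gamma}=2^m$ solvable when $s=1$. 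The genuinely new point to prove carefully is the uniqueness of the exponent pair $\{a_0,m-a_0\}$ needed for Condition \ref{condition_1}.
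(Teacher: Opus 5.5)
Your strategy is the paper's. You take from Feng's descent (Lemma 2.2 and Theorem 4.1 of \cite{FKQ_earliest}, using $s=1$) that $m$ is the least odd exponent for which $\gamma\overline{\gamma}=2^m$ is solvable in $\mathcal{O}_K$. You then get Condition~\ref{condition_2} by dividing by $2$, and Condition~\ref{condition_1} from $2\mathcal{O}_K=\mathfrak{P}\overline{\mathfrak{P}}$.

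There is a minor slip: the element realizing $m$ is $\frac{A\sqrt{-p_1}+B\sqrt{p_2}}{2}$. It cannot involve $\sqrt{-p_2}$, which is not in $K$ because $p_2\equiv 1 \pmod 4$.

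The real problem is your treatment of Condition~\ref{condition_1}, which is not a proof as written. In the ``full support'' case you invoke an unspecified combination of $\gamma_1\overline{\gamma_2}$-type products. Taken literally, $\gamma_1\overline{\gamma_2}$ produces the exponent $2m$, which is even and larger, not smaller. You also leave the ``uniqueness of the exponent pair $\{a_0,m-a_0\}$'' as the key point still to be proved.

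The missing observation takes one line. Since $\mathfrak{P}\neq\overline{\mathfrak{P}}$ are coprime and $\mathfrak{P}\overline{\mathfrak{P}}=2\mathcal{O}_K$, if both divide $(\gamma)$ then $2\mid\gamma$. That contradicts Condition~\ref{condition_2}, which you have just established. Hence every $\gamma$ with $\gamma\overline{\gamma}=2^m$ has $(\gamma)=\mathfrak{P}^m$ or $\overline{\mathfrak{P}}^m$; that is, $a_0=0$, and the full-support case is empty. Two such ideals with the same support are therefore equal, and Condition~\ref{condition_1} follows. This is exactly how the paper concludes.

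This also makes your norm computation down to $\mathbb{Q}(\sqrt{-p_1p_2})$ and the class-group discussion unnecessary. The unit subtlety you flag as the main obstacle never arises either. Both conditions concern elements with $\gamma\overline{\gamma}=2^m$ exactly, not arbitrary generators of principal ideals. The minimality of $m$ you take from Feng is likewise a statement about such elements.
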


\begin{remark} \label{remark_infinite_set_of_primes}
Assume GRH, then there are infinitely many primes $\equiv 3 \ (\text{resp.} \ 5) \pmod{8}$ and having $2$ as a primitive root, see \cite{primitiveroot}. Actually, let $A=\prod_{p} (1-\frac{1}{p(p-1)})$ be the Artin constant, then \cite{primitiveroot} shows that assuming GRH, then primes $\equiv 3 \ (\text{resp.} \ 5) \pmod{8}$ and having $2$ as a primitive root has natural density $\frac{A}{2}$, and therefore has relative density $2A \approx 0.748$ in the set of primes $\equiv 3 \ (\text{resp.} \ 5) \pmod{8}$. For the above second condition to hold, excluding the condition that $p_1$ and $p_2$ are not Wieferich primes (which is almost always satisfied), we still need $\gcd(\frac{p_1-1}{2},p_2-1)=1$, which appears far from rare. Hence we may expect that the second condition is satisfied in a large number of cases already satisfying the first condition, which is confirmed by our numerical experiment, see table \ref{tab:theorem1-numerical-results} in the appendix.
\end{remark}

\begin{remark} \label{remark_why_generalize_Feng}
The above theorem generalizes case (1), theorem 4.1 of \cite{FKQ_earliest} in the following sense:

Compared to case (1), theorem 4.1 of \cite{FKQ_earliest}, our theorem only assumes one more condition, that is $2$ having multiplicative order $\frac{\varphi(N)}{2}$. This means that $s=1$, and in this case theorem 4.1 of \cite{FKQ_earliest} states that there is no GBF of type $[n,2N]$ for all $n<m, 2 \nmid n$, while our theorem proves the nonexistence of type $[m,2N]$ GBF. While the case of $s=1$ seems to happen not infrequently (see the above remark), for some scattered results generalizing case (1), theorem 4.1 of \cite{FKQ_earliest} without requiring the order of $2$ to be $\frac{\varphi(N)}{2}$, see section \ref{section_scattered_results}.
\end{remark}

\begin{proof}
It suffices to check that condition \ref{condition_1} and condition \ref{condition_2} hold in this case. We do this, in essential, by giving $m$ another characterization. Let $K=\mathbb{Q}(\zeta_N)$, $E=\mathbb{Q}(\sqrt{-p_1 p_2})$ and $T=\mathbb{Q}(\sqrt{-p_1},\sqrt{p_2}) \subseteq K$. We prove that: $m$ is the smallest positive odd integer such that there exists $\alpha \in \mathcal{O}_K$ satisfying $\alpha \overline{\alpha}=2^{m}$.

Let $\lambda$ be the smallest positive odd integer such that there exists $\alpha \in \mathcal{O}_K$ satisfying $\alpha \overline{\alpha}=2^{\lambda}$. It suffices to show that $m=\lambda$.

Combining lemma 2.2, theorem 4.1 of \cite{FKQ_earliest} and our assumption of $2$ having order $\frac{\varphi(N)}{2}$, we have the following result:

Let $l$ be a positive integer. Let $x \in \mathcal{O}_K$ and $x \overline{x}=2^l$. Then there exists $y \in \mathcal{O}_T$ such that $y=x \cdot \zeta_{2N}^{j}$ for some $j \in \mathbb{Z}$, $y^2 \in \mathcal{O}_{E}$ and $y \overline{y}=2^l$.

Let $\alpha \in \mathcal{O}_K$ be such that $\alpha \overline{\alpha}=2^{\lambda}$, then by the above result there exists $\beta \in \mathcal{O}_T$ such that $\beta \overline{\beta}=2^{\lambda}$, $\beta^{2} \in \mathcal{O}_E$ and $\beta=\alpha \cdot \zeta_{2N}^{j}$. The condition of $\beta \in \mathcal{O}_T$ and $\beta^{2} \in \mathcal{O}_E$ implies that $\beta=\frac{r \sqrt{-p_1}+s \sqrt{p_2} }{2}$ for some $r,s \in \mathbb{Z}$ with $r+s \equiv 0 \pmod 2$, see the proof of theorem 4.1 of \cite{FKQ_earliest}. Then $\beta \overline{\beta}=2^{\lambda}$ implies that $p_1 r^2+p_2 s^2=2^{\lambda+2}$, so $\lambda \geq m$ by the definition of $m$. On the other hand, suppose $p_1 A^2+p_2 B^2=2^{m+2}$, then $A+B \equiv 0 \pmod 2$, so $\alpha=\frac{A \sqrt{-p_1}+ B \sqrt{p_2}}{2} \in \mathcal{O}_K$. We have that $\alpha \overline{\alpha}=2^{m}$, which implies that $m \geq \lambda$ by the definition of $\lambda$. Therefore $m=\lambda$.

Thus for $\alpha \in \mathcal{O}_K$, if $\alpha \overline{\alpha}=2^{m}$, then $2 \nmid \alpha$, since otherwise there would exist some $\beta \in \mathcal{O}_K$ satisfying $\beta \overline{\beta}=2^{m-2}$, contradicting the definition of $m$ (The case of $m=1$ is straightforward). This shows that condition \ref{condition_2} holds in this case.

Since the decomposition field of $2$ is $E=\mathbb{Q}(\sqrt{-p_1 p_2})$, and $2 \mathcal{O}_E=\mathfrak{p} \overline{\mathfrak{p}}$ for some prime ideal $\mathfrak{p}$ in $\mathcal{O}_E$, we have that $2 \mathcal{O}_K=\mathfrak{P} \overline{\mathfrak{P}}$ for some prime ideal $\mathfrak{P}$ in $\mathcal{O}_K$. Therefore for $\alpha \in \mathcal{O}_K$ such that $\alpha \overline{\alpha}=2^{m}$, since $2 \nmid \alpha$, $(\alpha)$ must be ${\mathfrak{P}}^{m}$ or ${\overline{\mathfrak{P}}}^{m}$, which shows that condition \ref{condition_1} is also satisfied, and thus shows that GBF of type $[m,2N]$ does not exist and completes the proof. 
\end{proof}

\begin{example}
These pairs satisfy the assumptions of theorem \ref{thm_two_primes_3_and_5}: $(67,5)$, $(83,5)$, $(11,13)$, $(59,13),(59,37),(19,53)$. The values of $m$ are $9,5,5,11,21,15$ respectively. So by theorem \ref{thm_two_primes_3_and_5} we have the nonexistence results for $[n,2N]$ where $N=p_{1}^{e_1} p_{2}^{e_2}$ and $(n,p_1,p_2)$ are the following pairs:
$$(9,67,5), (5,83,5), (5,11,13), (11,59,13),(21,59,37),(15,19,53).$$
\end{example}

\begin{remark}
The above arguments apply for case (2) of theorem 4.1 of \cite{FKQ_earliest} too, and this gives another proof of theorem 2 of \cite{Lv_Li}. 
\end{remark}

Theorems 1 and 2 of \cite{Feng_Liu} can be generalized in a very similar manner. As per previous, these two theorems are stated as result \ref{result_Feng_Liu} in section \ref{section_introduction} with the exact value of the upper bound $M$ omitted. We shall describe this value in detail before generalizing these two theorems.

Let all the settings stated in result \ref{result_Feng_Liu} remain unchanged, except now we assume that $p_1 \equiv 7 \ (\mathrm{mod} \ 8)$. Let $f$ be the order of $2 \ \mathrm{mod} \ N$, and let $g=\frac{\varphi(N)}{f}$. Then $s=\frac{g}{4}$ is odd, see \cite{Feng_Liu}. Let $m_1$ be the smallest positive integer such that the equation
$$2^{m_1+2}=x_{1}^{2}+p_1 y_{1}^{2}$$ has a $\mathbb{Z}$-solution $(x_1,y_1)$ and let $m_2$ be the smallest positive integer such that the equation $$2^{m_2+2}=x_{2}^{2}+p_1 p_2 \cdot y_{2}^{2}$$ has a $\mathbb{Z}$-solution $(x_2,y_2)$. Let $L=\min \{ m_1,m_2 \}$. Then the upper bound $M=\frac{L}{s}$. This, together with the result \ref{result_Feng_Liu} we stated in section \ref{section_introduction}, gives a complete restatement of theorem 1 of \cite{Feng_Liu}, which is generalized by the following theorem:

\begin{theorem} \label{thm_two_primes_1_and_7}
Let $N=p_{1}^{e_1} p_{2}^{e_2}$, where $p_1 \equiv 7 \ (\mathrm{mod} \ 8)$, $p_2 \equiv 1 \pmod{8}$ are two primes satisfying the following conditions:

\begin{enumerate}
    \item $(\frac{p_1}{p_2})=-1$, which is equivalent to both $p_1$ and $p_2$ are self-conjugate $\mathrm{mod} \ N$.
    \item The multiplicative order of $2$ in ${(\mathbb{Z} / N \mathbb{Z})}^{*}$ is $\frac{\varphi(N)}{4}$, or $s=1$ by the notation of theorem 1 of \cite{Feng_Liu}. Similar to the analysis in theorem \ref{thm_two_primes_3_and_5}, this is equivalent to $\mathrm{ord}_{p_1}(2)=\frac{p_1-1}{2}$, $\mathrm{ord}_{p_2}(2)=\frac{p_2-1}{2}$ and $\gcd(\frac{p_1-1}{2},p_2-1)=1$, provided that $p_1$ and $p_2$ are not Wieferich primes. Also note that under this condition, the decomposition field of $2$ in $K=\mathbb{Q}(\zeta_{N})$ is $\mathbb{Q}(\sqrt{-p_1},\sqrt{p_2})$.
\end{enumerate}

Let $L=\min \{m_1,m_2 \}$ be as above in the description of the upper bound $M$ of theorem 1 of \cite{Feng_Liu}. Then by the same theorem, $L$ is odd, and we have that:
 
There is no GBF of type $[L,2N]$.
\end{theorem}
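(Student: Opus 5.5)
We follow the proof of Theorem \ref{thm_two_primes_3_and_5}: it suffices to verify conditions \ref{condition_1} and \ref{condition_2} for $n=L$. The main step is to show that $L$ is the smallest positive odd integer $\lambda$ for which some $\alpha\in\mathcal{O}_K$ satisfies $\alpha\overline{\alpha}=2^{\lambda}$. Throughout we put $K=\mathbb{Q}(\zeta_N)$ and let $D=\mathbb{Q}(\sqrt{-p_1},\sqrt{p_2})$ be the decomposition field of $2$. Since $s=1$, we have $[K:D]=f$.

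\textbf{Step 1: $L\le\lambda$.} The reduction lemmas of \cite{Feng_Liu} (the analogues of lemma 2.2 and theorem 4.1 of \cite{FKQ_earliest} used before) together with $s=1$ give the following: if $x\in\mathcal{O}_K$ and $x\overline{x}=2^l$, then there is a root of unity $\zeta_{2N}^j$ such that $y=x\zeta_{2N}^j$ lies in $\mathcal{O}_D$. Moreover, $y$ can be taken in one of the quadratic subfields $\mathbb{Q}(\sqrt{-p_1})$ or $\mathbb{Q}(\sqrt{-p_1p_2})$, up to a factor that makes $y$ or $y^2$ land there, exactly as in \cite{Feng_Liu}, where the two cases produce $m_1$ and $m_2$.

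Writing $y=\frac{x_1+y_1\sqrt{-p_1}}{2}$, respectively $y=\frac{x_2+y_2\sqrt{-p_1p_2}}{2}$ (note $-p_1p_2\equiv1\pmod 4$ and $-p_1\equiv1\pmod 4$), the relation $y\overline{y}=2^{\lambda}$ turns into $x_i^2+p_1y_i^2=2^{\lambda+2}$ or $x_i^2+p_1p_2y_i^2=2^{\lambda+2}$. Hence $\lambda\ge m_1$ or $\lambda\ge m_2$, and in either case $\lambda\ge L$.

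\textbf{Step 2: $\lambda\le L$.} Take a solution realising $L$ and set $\alpha=\frac{x+y\sqrt{-p_1}}{2}$ or $\alpha=\frac{x+y\sqrt{-p_1p_2}}{2}$. In the first case $x\equiv y\pmod 2$ because $p_1\equiv 7\pmod 8$; in the second because $p_1p_2\equiv 7\pmod 8$. Both square roots lie in $K$ by fact \ref{QuadraticContained}, so $\alpha\in\mathcal{O}_K$ and $\alpha\overline{\alpha}=2^L$. Since $L$ is odd by \cite{Feng_Liu}, this gives $\lambda\le L$.

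\textbf{Condition \ref{condition_2}.} This follows from Steps 1 and 2 as in remark \ref{remark_min_solution}. If $2\mid\alpha$ with $\alpha\overline{\alpha}=2^L$, then $\alpha/2$ would give a solution of $\beta\overline{\beta}=2^{L-2}$, contradicting minimality. The case $L=1$ is immediate.

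\textbf{Condition \ref{condition_1}.} This is the step I expect to be the main obstacle. Unlike Theorem \ref{thm_two_primes_3_and_5}, the field $D$ is biquadratic, so $2\mathcal{O}_D$ splits into $4$ primes, and each remains prime in $K$ because $D$ is the decomposition field. Complex conjugation swaps them in pairs, say $\mathfrak{P}_1,\overline{\mathfrak{P}_1},\mathfrak{P}_2,\overline{\mathfrak{P}_2}$.

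Let $\gamma\overline{\gamma}=2^L$ with $2\nmid\gamma$. Then for each $i$ exactly one of $\mathfrak{P}_i,\overline{\mathfrak{P}_i}$ divides $(\gamma)$. Since the exponents of $\mathfrak{P}_i$ and $\overline{\mathfrak{P}_i}$ in $(\gamma)$ sum to $L$, the ideal $(\gamma)$ must be of the form $\mathfrak{Q}_1^{L}\mathfrak{Q}_2^{L}$ with $\mathfrak{Q}_i\in\{\mathfrak{P}_i,\overline{\mathfrak{P}_i}\}$. Thus $(\gamma)$ is determined by its support.

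The delicate point is to justify that the valuations really are $0$ or $L$ rather than mixed. The argument runs as follows. If both $\mathfrak{P}_i$ and $\overline{\mathfrak{P}_i}$ divided $(\gamma)$, their product $\mathfrak{P}_i\overline{\mathfrak{P}_i}$ would divide $(\gamma)$. If the same happened for both $i$, this would give $2\mid\gamma$. If it happened for only one $i$, I would use the explicit form of $\gamma$ from Step 1 to rule it out: $\gamma$ lies, up to a root of unity, in a quadratic field in which $2$ splits into a pair swapped by conjugation, so its valuations pair up.

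Once $(\gamma)$ is identified as above, any two such $\gamma_1,\gamma_2$ with the same support have $(\gamma_1)=(\gamma_2)$, which is condition \ref{condition_1}. Then the element partition method of section \ref{section_review_EPM} yields the nonexistence of GBF of type $[L,2N]$.
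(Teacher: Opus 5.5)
Your strategy is the paper's: check Conditions~\ref{condition_1} and~\ref{condition_2} at $n=L$, get Condition~\ref{condition_2} from the minimality of $L$ among odd exponents, and get Condition~\ref{condition_1} by showing $(\gamma)=\mathfrak{Q}_1^{L}\mathfrak{Q}_2^{L}$, which is exactly the paper's list of four ideals. However, the step you yourself flag as the obstacle is not closed.

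From $2\nmid\gamma$ you only learn that at least one of the four primes above $2$ is missing from $(\gamma)$. The other conjugate pair may still carry exponents $c$ and $L-c$ with $0<c<L$. Your remedy is that, by Step~1, $\gamma$ lies up to a root of unity in $\mathbb{Q}(\sqrt{-p_1})$ or $\mathbb{Q}(\sqrt{-p_1p_2})$. As you state it for general exponents, this is false. Take $z_1\in\mathbb{Q}(\sqrt{-p_1})$ and $z_2\in\mathbb{Q}(\sqrt{-p_1p_2})$ realizing $m_1$ and $m_2$. Then $z_1z_2^{2}$ satisfies $x\overline{x}=2^{m_1+2m_2}$, and this exponent is odd because $m_1$ divides the odd class number of $\mathbb{Q}(\sqrt{-p_1})$. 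This element is not divisible by $2$ and lies in neither field, even up to a root of unity. It also carries exponents $m_1$ and $2m_2$ on a conjugate pair.

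So whatever rules out mixed exponents must use that the exponent is exactly $L$, and your Condition~\ref{condition_1} argument never uses this. The hedge ``$y$ or $y^2$'' in Step~1 also fails: if only $y^2$ lies in the quadratic field, you get $2\lambda\ge m_i$, not $\lambda\ge m_i$. Step~1 should simply invoke the nonexistence of solutions for odd exponents below $L$ when $s=1$, from \cite{Feng_Liu}.

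The paper closes the gap by citation. Let $\mathfrak{q}_1,\mathfrak{q}_2$ be the primes below $\mathfrak{p}$ in $\mathbb{Q}(\sqrt{-p_1})$ and $\mathbb{Q}(\sqrt{-p_1p_2})$. The paper quotes from the proof of Theorem~1 of \cite{Feng_Liu} that every $x\in\mathcal{O}_T$ with $x\overline{x}=2^L$ generates the extension of one of $\mathfrak{q}_1^{L},\overline{\mathfrak{q}_1}^{L},\mathfrak{q}_2^{L},\overline{\mathfrak{q}_2}^{L}$.

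You can also prove this directly with norms. Let $\sigma$ be complex conjugation on $T=\mathbb{Q}(\sqrt{-p_1},\sqrt{p_2})$ and $\tau$ the automorphism fixing $\sqrt{-p_1}$. Write $(\gamma)_T=\mathfrak{p}^{a}(\mathfrak{p}^{\sigma})^{L-a}(\mathfrak{p}^{\tau})^{b}(\mathfrak{p}^{\sigma\tau})^{L-b}$.
\begin{itemize}
\item $\gamma\tau(\gamma)$ generates $\mathfrak{q}_1^{a+b}\overline{\mathfrak{q}_1}^{\,2L-a-b}$, and $\gamma\,\sigma\tau(\gamma)$ generates $\mathfrak{q}_2^{L+a-b}\overline{\mathfrak{q}_2}^{\,L-a+b}$.
\item Since $m_i$ is the order of $[\mathfrak{q}_i]$, this gives $m_1\mid 2(a+b-L)$ and $m_2\mid 2(a-b)$.
\item As $L$ is odd, $L=m_1$ forces $a+b\in\{0,L,2L\}$, and $L=m_2$ forces $a-b\in\{0,\pm L\}$.
\item In the middle cases ($a+b=L$, respectively $a=b$), $(\gamma)$ is $\sigma\tau$-invariant, respectively $\tau$-invariant. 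Then all four primes above $2$ divide $(\gamma)$ unless $a\in\{0,L\}$.
\end{itemize}
Combined with Condition~\ref{condition_2}, this gives $(a,b)\in\{0,L\}^2$, which is precisely your $\mathfrak{Q}_1^L\mathfrak{Q}_2^L$.
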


\begin{remark} \label{remark_density_near_primitive_root}
With the assumption of GRH, proposition 1.12 of \cite{Near_primitive_root} shows that the set of primes $p$ with $p \equiv 7 \pmod{8}$ and $\mathrm{ord}_{p}(2)=\frac{p-1}{2}$ has natural density $\frac{A}{2}$, and therefore has relative density $2A \approx 0.748$ in the set of primes $\equiv 7 \pmod{8}$. Meanwhile, let $S_1$ denote the set of primes $p$ with $p \equiv 1 \pmod{8}$ and $\mathrm{ord}_{p}(2)=\frac{p-1}{2}$, then by theorem 3.1 of \cite{Near_primitive_root}, $S_1$ has natural density $\frac{A}{4}$ under GRH, and therefore has relative density $A \approx 0.374$ in the set of primes $\equiv 1 \pmod{8}$. See the appendix for a detailed calculation. Note also that by Chebotarev's density theorem (see \cite{Chebotarev} for example), the set of primes $p \equiv 1 \pmod{8}$ such that $2$ is a quartic residue $\mathrm{mod} \ p$ has natural density $\frac{1}{8}$, so the set of primes $p \equiv 1 \pmod{8}$ such that $2$ is not a quartic residue $\mathrm{mod} \ p$ (denoted by $S_{2}$, which is the set of primes satisfying the assumption of theorem 1 of \cite{Feng_Liu}) also has natural density $\frac{1}{8}$, and $S_1$ has relative density $2A$ in $S_2$. Similar to the analysis in remark \ref{remark_infinite_set_of_primes}, we can expect that the above condition 2 is satisfied not infrequently. See table \ref{tab:theorem2-nonquartic-numerical-results} for numerical results.
\end{remark}

\begin{remark}
The above theorem generalizes theorem 1 of \cite{Feng_Liu} in a sense similar to remark \ref{remark_why_generalize_Feng}. More specifically, compared to theorem 1 of \cite{Feng_Liu}, we only added the condition of $s=1$, which already implies the condition that $2$ is not quartic residue $\mathrm{mod} \ p_2$. With the same condition added, theorem 1 of \cite{Feng_Liu} proves the nonexistence result of type $[n,2N]$ with $n<L$ and $2 \nmid n$, while our theorem proves the nonexistence of type $[L,2N]$ GBF. Some scattered results generalizing theorem 1 of \cite{Feng_Liu} without assuming $\mathrm{ord}_{N}(2)=\frac{\varphi(N)}{4}$ are given in section \ref{section_scattered_results}.
\end{remark}

\begin{proof}
The proof shares a similar manner to theorem \ref{thm_two_primes_3_and_5}. We check the conditions \ref{condition_1} and \ref{condition_2} as per previous. Let $K=\mathbb{Q}(\zeta_N)$ and $T=\mathbb{Q}(\sqrt{-p_1},\sqrt{p_2}) \subseteq K$. Note that $T$ is the decomposition field of $2$ in $K$. Suppose $k \in \mathbb{Z}$ is odd, $\alpha \in \mathcal{O}_K$ and $\alpha \overline{\alpha}=2^{k}$. Then similar to the proof of theorem 4.1 of \cite{FKQ_earliest}, there exists $b \in \mathbb{Z}$ such that $\beta=\alpha \cdot \zeta_{2N}^{b}$ satisfies $\beta^{2} \in \mathcal{O}_T$. Moreover, in the proof of theorem 1 of \cite{Feng_Liu}, it is shown that actually $\beta \in \mathcal{O}_T$. It is also shown that $L$ is the smallest odd integer such that $2^{L}=x \overline{x}$ for some $x \in \mathcal{O}_T$, so for each $\alpha \in \mathcal{O}_K$ such that $\alpha \overline{\alpha}=2^{L}$, condition \ref{condition_2} is satisfied.

Note that $\mathrm{Gal}(T / \mathbb{Q})=\{ 1, \sigma, \tau, \sigma \tau \}$, where $\sigma(\sqrt{-p_1})=-\sqrt{-p_1}$, $\sigma(\sqrt{p_2})=\sqrt{p_2}$; $\tau(\sqrt{-p_1})=\sqrt{-p_1}$, $\tau(\sqrt{p_2})=-\sqrt{p_2}$. Then $2 \mathcal{O}_{T}=\mathfrak{p} {\mathfrak{p}}^{\sigma} {\mathfrak{p}}^{\tau} {\mathfrak{p}}^{\sigma \tau}$, where $\mathfrak{p}$ is a prime ideal of $\mathcal{O}_{T}$ lying above $2$. Since $T$ is the decomposition field of $2$ in $K$, all prime ideals lying above $2$ in $\mathcal{O}_{T}$ are inert in $\mathcal{O}_K$, so we have that $2 \mathcal{O}_{K}=\mathfrak{P} {\mathfrak{P}}^{\sigma} {\mathfrak{P}}^{\tau} {\mathfrak{P}}^{\sigma \tau}$, where $\mathfrak{P}=\mathfrak{p} \mathcal{O}_{K}$, ${\mathfrak{P}}^{\sigma}={\mathfrak{p}}^{\sigma} \mathcal{O}_{K}$ and the other two ideals follow.

In the proof of theorem 1 of \cite{Feng_Liu}, it is actually shown that: Suppose $x \in \mathcal{O}_{T}$ and $x \overline{x}=2^{L}$, then let $I=(x)$, we have that $I$ must be one of the following four:
$${\mathfrak{p}}^{L} {({\mathfrak{p}}^{\tau})}^{L}, {\mathfrak{p}}^{L} {({\mathfrak{p}}^{\sigma \tau})}^{L}, {({\mathfrak{p}}^{\sigma})}^{L} {({\mathfrak{p}}^{\tau})}^{L} \text{or} \ {({\mathfrak{p}}^{\sigma})}^{L} {({\mathfrak{p}}^{\sigma \tau})}^{L}.$$
Suppose $\alpha \in \mathcal{O}_K$ and $\alpha \overline{\alpha}=2^{L}$. Consider the corresponding $\beta=\alpha \cdot \zeta_{2N}^{b} \in \mathcal{O}_{T}$ as above, then $\beta \mathcal{O}_{T}$ must be one of the above four possibilities, so $\beta \mathcal{O}_K=\alpha \mathcal{O}_{K}$ must be one of the following four:
$${\mathfrak{P}}^{L} {({\mathfrak{P}}^{\tau})}^{L}, {\mathfrak{P}}^{L} {({\mathfrak{P}}^{\sigma \tau})}^{L}, {({\mathfrak{P}}^{\sigma})}^{L} {({\mathfrak{P}}^{\tau})}^{L} \text{or} \ {({\mathfrak{P}}^{\sigma})}^{L} {({\mathfrak{P}}^{\sigma \tau})}^{L}.$$
Hence for $\alpha_1, \alpha_2 \in \mathcal{O}_K$ with $\alpha_{1} \overline{\alpha_1}=\alpha_2 \overline{\alpha_2}=2^{L}$, if $(\alpha_1) \neq (\alpha_2)$, then $(\alpha_1)$ and $(\alpha_2)$ cannot have the same support, so condition \ref{condition_1} also holds, and this completes the proof of the theorem.
\end{proof}

\begin{example}
These pairs satisfy the assumptions of theorem \ref{thm_two_primes_1_and_7}: $(23,17)$, $(23,97)$, $(47,41)$, $(71,17)$, $(79,17),(79,41)$. The values of $L$ are $3,3,5,7,5,5$ respectively. So by theorem \ref{thm_two_primes_1_and_7} we have the nonexistence results for $[n,2N]$ where $N=p_{1}^{e_1} p_{2}^{e_2}$ and $(n,p_1,p_2)$ are the following pairs:
$$(3,23,17), (3,23,97), (5,47,41), (7,71,17), (5,79,17),(5,79,41).$$
\end{example}

Now we generalize theorem 2 of \cite{Feng_Liu}. As above, we first give a complete restatement of the upper bound in this theorem.

Let all the settings stated in result \ref{result_Feng_Liu} remain unchanged, except now we assume that $p_1 \equiv 3 \ (\mathrm{mod} \ 8)$. Let $f$ be the order of $2 \ \mathrm{mod} \ N$, and let $g=\frac{\varphi(N)}{f}$. Then $s=\frac{g}{4}$ is odd, see \cite{Feng_Liu}. Let $A$ and $B$ be the integers determined by $p_2=A^{2}+B^2$, where $2 \nmid A$. Let $m$ be the smallest positive integer such that the following equations
\begin{equation} \label{Diophantine_equa}
\begin{cases}
2^{m+4}=p_{1}^{2} X^2+p_2 Y^2+2 p_1 p_2 (Z^2+W^2)
\\[0.5 em]
XY+2BZW+A(W^2-Z^2)=0
\end{cases}
\end{equation}
have a $\mathbb{Z}$-solution. Note that by changing $W$ and $Z$, $A$ can be chosen to be $\equiv 1 \ \text{or} \ 3 \pmod{4}$ arbitrarily. Note also that here it is guaranteed that $XY \neq 0$, and reducing $\mathrm{mod} \ p_2$ shows that $m$ must be odd, so some statements in \cite{Feng_Liu} are not needed here. Then the upper bound $M=\frac{m}{s}$. This gives a complete restatement of theorem 2 of \cite{Feng_Liu}, and it is generalized by the following theorem by adding the condition of $s=1$:

\begin{theorem} \label{thm_twoprimes_3and1}
Let $N=p_{1}^{e_1} p_{2}^{e_2}$, where $p_1 \equiv 3 \ (\mathrm{mod} \ 8)$, $p_2 \equiv 1 \pmod{8}$ are two primes satisfying the following conditions:

\begin{enumerate}
    \item $(\frac{p_1}{p_2})=-1$, which is equivalent to both $p_1$ and $p_2$ are self-conjugate $\mathrm{mod} \ N$.
    \item The multiplicative order of $2$ in ${(\mathbb{Z} / N \mathbb{Z})}^{*}$ is $\frac{\varphi(N)}{4}$, or $s=1$ by the notation of theorem 1 of \cite{Feng_Liu}. Similar to the above theorems, this is equivalent to $\mathrm{ord}_{p_1}(2)=p_1-1$, $\mathrm{ord}_{p_2}(2)=\frac{p_2-1}{2}$ and $\gcd(\frac{p_1-1}{2},p_2-1)=1$, provided that $p_1$ and $p_2$ are not Wieferich primes. Also note that under this condition, the decomposition field of $2$ in $K=\mathbb{Q}(\zeta_{N})$ is a cyclic quartic field containing $\mathbb{Q}(\sqrt{p_2})$, see \cite{Feng_Liu}.
\end{enumerate}
Let $m$ have the same meaning as above, i.e. it is the smallest positive integer such that the above Diophantine Equations (\ref{Diophantine_equa}) have a $\mathbb{Z}$-solution. Then $m$ is odd and there is no GBF of type $[m,2N]$.
\end{theorem}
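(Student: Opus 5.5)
The plan follows the same pattern as the proofs of Theorems \ref{thm_two_primes_3_and_5} and \ref{thm_two_primes_1_and_7}: it suffices to verify conditions \ref{condition_1} and \ref{condition_2} for $n=m$ and $K=\mathbb{Q}(\zeta_N)$. Let $F\subseteq K$ be the decomposition field of $2$. Under the hypothesis $s=1$ this is the cyclic quartic field containing $\mathbb{Q}(\sqrt{p_2})$ described in \cite{Feng_Liu}, and $\mathrm{Gal}(F/\mathbb{Q})=\langle\rho\rangle$ with $\rho^2$ restricting to the nontrivial automorphism of $\mathbb{Q}(\sqrt{p_2})$. Since complex conjugation lies in $\mathrm{Gal}(F/\mathbb{Q})$ and $F$ is imaginary, conjugation restricts to $\rho^2$ on $F$. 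Moreover, $2\mathcal{O}_F=\mathfrak{p}\mathfrak{p}^{\rho}\mathfrak{p}^{\rho^2}\mathfrak{p}^{\rho^3}$, and each of these four primes is inert in $K/F$. Write $\mathfrak{P}^{\rho^i}=\mathfrak{p}^{\rho^i}\mathcal{O}_K$.

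The first step imports from the proof of Theorem 2 of \cite{Feng_Liu}, in the same way the earlier proofs import from \cite{FKQ_earliest}: for $k$ odd and $\alpha\in\mathcal{O}_K$ with $\alpha\overline{\alpha}=2^k$, there is $b$ such that $\beta=\alpha\zeta_{2N}^b\in\mathcal{O}_F$. This uses that $(\alpha)$ is a product of the $\mathfrak{P}^{\rho^i}$, hence Galois invariant under $\mathrm{Gal}(K/F)$, followed by a Hilbert 90/unit argument. Writing $\beta$ in an integral basis of $F$ adapted to $\sqrt{p_2}=A'+B'i$-type data, the relation $\beta\overline{\beta}=2^k$ becomes exactly the system (\ref{Diophantine_equa}) with $m$ replaced by $k$. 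This identifies $m$ as the smallest odd $\lambda$ for which some $\alpha\in\mathcal{O}_K$ satisfies $\alpha\overline{\alpha}=2^{\lambda}$; both inequalities are proved as in Theorem \ref{thm_two_primes_3_and_5}. The converse direction builds $\alpha\in\mathcal{O}_F$ from a solution $(X,Y,Z,W)$, and the second equation of (\ref{Diophantine_equa}) is precisely the condition that $\beta\overline{\beta}$ is rational. Parity of $m$ follows from reduction mod $p_2$, as noted before the statement. Condition \ref{condition_2} then follows immediately: if $2\mid\alpha$, then $\alpha/2$ would have norm $2^{m-2}$, contradicting minimality.

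For condition \ref{condition_1}, suppose $(\alpha)=\prod_i(\mathfrak{P}^{\rho^i})^{a_i}$. From $\alpha\overline{\alpha}=2^m$ and the fact that conjugation acts as $\rho^2$, we get $a_0+a_2=a_1+a_3=m$. Since $2\nmid\alpha$, neither pair can have both exponents positive simultaneously, so $\min(a_0,a_2)=0$ and $\min(a_1,a_3)=0$. Hence $(\alpha)$ is one of $(\mathfrak{P}^{\rho^{i}})^m(\mathfrak{P}^{\rho^{j}})^m$ with $i\in\{0,2\}$ and $j\in\{1,3\}$. Distinct choices have distinct supports, so equal support forces equal ideals, and condition \ref{condition_1} holds. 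With both conditions verified, the element partition method yields the nonexistence of a GBF of type $[m,2N]$.

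The main obstacle is the first step. We need that \cite{Feng_Liu} really shows $\beta\in\mathcal{O}_F$ itself, rather than merely $\beta^2\in\mathcal{O}_F$, and that the norm equation in $\mathcal{O}_F$ is equivalent to (\ref{Diophantine_equa}) with exactly the normalization $2^{m+4}$. If only $\beta^2\in\mathcal{O}_F$ is available, I would argue as in \cite{Feng_Liu}: $\beta^2$ generates $(\mathfrak{P}^{\rho^i}\mathfrak{P}^{\rho^j})^{2k}$, and one adjusts by a root of unity, using that $F(\beta)\subseteq K$ is at most quadratic over $F$ and unramified above $2$.
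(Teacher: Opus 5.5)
Your reduction to conditions \ref{condition_1} and \ref{condition_2} matches the paper, as do the descent to $F$ imported from \cite{Feng_Liu} and the proof of condition \ref{condition_2} from the minimality of $m$. (The paper also simply imports $\beta\in\mathcal{O}_F$ from \cite{Feng_Liu}. The step you flag as the main obstacle is therefore not where the difficulty lies.)

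\textbf{The gap is in condition \ref{condition_1}.} Write $\mathfrak{P}_i=\mathfrak{P}^{\rho^i}$. From $\alpha\overline{\alpha}=2^m$ you correctly get $a_0+a_2=a_1+a_3=m$. But $2\nmid\alpha$ only gives $\min(a_0,a_1,a_2,a_3)=0$, i.e.\ \emph{one} of the four primes is absent. It does not give $\min(a_0,a_2)=\min(a_1,a_3)=0$. For instance, $\mathfrak{P}_0^{a_0}\mathfrak{P}_1^{m}\mathfrak{P}_2^{m-a_0}$ with $0<a_0<m$ is not divisible by $2$.

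Such ideals genuinely occur. In the generic case the paper shows $(\alpha)$ is $\mathfrak{P}_0^{a}\mathfrak{P}_1^{m}\mathfrak{P}_2^{b}$ with $a,b>0$ and $a+b=m$, or one of its three $\rho$-rotates, each with support of size three. Your list of four ideals is right only in the degenerate case $ab=0$. Once these configurations are admitted, two different principal ideals $\mathfrak{P}_0^{a_0}\mathfrak{P}_1^{m}\mathfrak{P}_2^{m-a_0}$ and $\mathfrak{P}_0^{a_0'}\mathfrak{P}_1^{m}\mathfrak{P}_2^{m-a_0'}$ would share a support. Nothing in your argument excludes this, so condition \ref{condition_1} is not established.

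\textbf{The missing ingredient is the paper's class-group argument in $F$.}
\begin{itemize}
\item When $a_0+a_2=m$, we have $\mathfrak{p}_0^{a_0}\mathfrak{p}_1^{m}\mathfrak{p}_2^{a_2}=(\mathfrak{p}_0\mathfrak{p}_1)^{a_0+a_2\rho}$. So principality is equivalent to $a_0+a_2\rho$ lying in the annihilator of $[\mathfrak{p}_0\mathfrak{p}_1]$ in $R=\mathbb{Z}[G]/(1+\rho^2)\cong\mathbb{Z}[\mathrm{i}]$. This annihilator is a principal ideal $(a+b\rho)R$ with $a,b\ge 0$.
\item You need $a+b$ odd, which comes from the order of $[\mathfrak{p}_0\mathfrak{p}_1]$ being odd.
\item You need $a+b\ge m$, obtained by producing $\beta\in\mathcal{O}_F$ with $\beta\overline{\beta}=2^{a+b}$.
\item Both of the last two steps require removing a unit $u$ from $\alpha\overline{\alpha}=2^{a+b}u$. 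This works because $u$ is a totally positive unit of $\mathbb{Q}(\sqrt{p_2})$, hence a square, since the fundamental unit has norm $-1$.
\item Finally, write $a_0+a_2\mathrm{i}=(a+b\mathrm{i})\gamma$. The chain $a_0^2+a_2^2\le(a_0+a_2)^2\le(a+b)^2\le 2(a^2+b^2)$, together with $m$ odd, forces $\gamma$ to be a unit.
\item Nonnegativity of $a_0,a_2$ then pins down $(a_0,a_2)$ uniquely for each position of the absent prime.
\end{itemize}
Only with this uniqueness does ``same support'' imply ``same ideal''.
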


\begin{remark}
Similar as above, assuming GRH, the set of primes $p$ such that $p \equiv 3 \pmod{8}$ and $2$ is a primitive root $\mathrm{mod} \ p$ has natural density $\frac{A}{2}$ with $A$ denoting the Artin's constant, and the set of primes $p$ such that $p \equiv 1 \pmod{8}$ and $\mathrm{ord}_{p}(2)=\frac{p-1}{2}$ has natural density $\frac{A}{4}$, so we may expect that the above condition 2 is satisfied not infrequently. See table \ref{tab:theorem3-nonquartic-numerical-results} for numerical results.
\end{remark}

\begin{remark}
Similar as above, compared to theorem 2 of \cite{Feng_Liu}, we only assumed one more condition regarding the order of $2$ $\mathrm{mod} \ N$. Under this assumption theorem 2 of \cite{Feng_Liu} shows the nonexistence results of type $[n,2N]$ for $2 \nmid n$, $n<m$, while our result shows the nonexistence result of $[m,2N]$. For some scattered results generalizing theorem 2 of \cite{Feng_Liu} without constraints on the order of $2$, see section \ref{section_scattered_results}.
\end{remark}

\begin{proof}
Similar as above, we check the conditions \ref{condition_1} and \ref{condition_2} in section \ref{section_review_EPM}.

Let $K=\mathbb{Q}(\zeta_{N})$. Let $M$ be the unique quartic subfield of $\mathbb{Q}(\zeta_{p_2})$, then $M=\mathbb{Q}(\gamma_0)$, where $\gamma_{0}=\frac{\sqrt{p_2}-1+\sqrt{2 p_2+2 A_{0} \sqrt{p_2}}}{4}$, here $A_{0} \equiv 3 \pmod{4}$ is determined by $p_2=A_{0}^{2}+B^2$, see section 4.2 of \cite{Gauss_and_Jacobi_sums} for the detailed computation on the Gauss sum $g(4)$ included in the process of determining the primitive element $\gamma_0$ of $M$. Let $\tau$ be a generator of $\mathrm{Gal}(M / \mathbb{Q})$, then $\{ \tau^{i}(\gamma_0) \ | \ 0 \leq i \leq 3 \}$ is an integral basis of $\mathcal{O}_M$, see lemma 2.19 of \cite{Feng_Liu}.

Let $H=\mathbb{Q}(\sqrt{-p_1},\gamma_0)$, then an integral basis of $\mathcal{O}_H$ is $\{ \tau^{i}(\gamma_0), \frac{1+\sqrt{-p_1}}{2} \tau^{i}(\gamma_0) \ | \ 0 \leq i \leq 3 \}$. Let $T$ be the decomposition field of $2$ in $K$, then $T/ \mathbb{Q}$ is a cyclic quartic extension, $T \subseteq H$ and $T$ is fixed by the element in $\mathrm{Gal}(H / \mathbb{Q})$ that maps $\sqrt{-p_1}$ to $-\sqrt{-p_1}$ and maps $\gamma_{0}$ to $\tau^{2}(\gamma_0)$. Thus, let $x \in \mathcal{O}_T$, then we may express $x$ as the $\mathbb{Z}$-linear combination of the integral basis of $\mathcal{O}_H$ and use the condition that $x$ is stable under $\mathrm{Gal}(H/T)$ to obtain more restrictions. It turns out that an integral basis of $\mathcal{O}_T$ is:
$$\Big\{ 1, \frac{\sqrt{p_2}-1}{2}, \frac{\sqrt{p_2}-1+\sqrt{-2 p_1 (p_2 +A_0 \sqrt{p_2})}}{4}, \frac{-\sqrt{p_2}-1+\sqrt{-2 p_1 (p_2 - A_0 \sqrt{p_2})}}{4} \Big\}.$$

Let $\beta_{0}=\frac{\sqrt{p_2}-1+\sqrt{-2 p_1 (p_2+A_0 \sqrt{p_2})}}{4}$. Then $T=\mathbb{Q}(\beta_{0})$, and it is straightforward (although a little bit tedious) to compute that $\mathrm{disc}(\beta_{0})=\mathrm{disc}(T)=p_{1}^{2}p_{2}^{3}$. Thus let $\sigma$ be a generator of $\mathrm{Gal}(T / \mathbb{Q})$, then $\{ \sigma^{i}(\beta_{0}) \ | \ 0 \leq i \leq 3 \}$ is an integral basis of $\mathcal{O}_T$. Let $\eta_{0}=\frac{1-\sqrt{p_2}+\sqrt{-2 p_1 (p_2 +A_0 \sqrt{p_2})}}{4}$. Then it is straightforward to verify that $T=\mathbb{Q}(\eta_{0})$ and $\{ \sigma^{i}(\eta_{0}) \ | \ 0 \leq i \leq 3 \}$ is also an integral basis of $\mathcal{O}_T$.

Note that in page 3070 of \cite{Feng_Liu}, $\eta_{0}$ is set to be $\frac{1-\sqrt{p_2}+\sqrt{-2 p_1 (p_2 -A_0 \sqrt{p_2})}}{4}$, where $A_0 \equiv 3 \pmod{4}$ is as above, but with this choice, $\eta_{0}$ would not be an algebraic integer. To make the setting correct and coincide with all the computations in \cite{Feng_Liu}, we set $A$ to be the integer determined by $p_2=A^{2}+B^{2}$ and $A \equiv 1 \pmod{4}$, set $\eta_{0}=\frac{1-\sqrt{p_2}+\sqrt{-2 p_1 (p_2 -A \sqrt{p_2})}}{4}$, i.e. the setting $A \equiv -1 \pmod{4}$ on page 3070 of \cite{Feng_Liu} should be changed into $A \equiv 1 \pmod{4}$, with all other setting preserved, and now everything would be correct. We follow this notation throughout the following proof. Without loss of generality we may assume that $\sigma$ maps $\eta_{0}$ to $\eta_{1}=\frac{1+\sqrt{p_2}+\sqrt{-2 p_1 (p_2+A \sqrt{p_2})}}{4}$, and by our above analysis $\{ \eta_{i} \ | \ 0 \leq i \leq 3 \}$ is an integral basis of $\mathcal{O}_{T}$, where $\eta_{i}=\sigma^{i}(\eta_{0})$.

Let $k$ be a positive odd integer. It is proved in the proof of theorem 2 of \cite{Feng_Liu} that if $\alpha \in \mathcal{O}_K$ and $\alpha \overline{\alpha}=2^{k}$, then there exists $b \in \mathbb{Z}$ such that $\beta=\alpha \cdot \zeta_{2N}^{b} \in \mathcal{O}_T$. Thus, as in \cite{Feng_Liu}, by representing $\beta$ as the $\mathbb{Z}$-linear combination of $\eta_{0}, \eta_{1}, \eta_{2} \ \text{and} \ \eta_{3}$, we see that the equations
\begin{equation} 
\begin{cases}
2^{k+4}=p_{1}^{2} X^2+p_2 Y^2+2 p_1 p_2 (Z^2+W^2)
\\[0.5 em]
XY+2BZW+A(W^2-Z^2)=0
\end{cases}
\end{equation}
has a $\mathbb{Z}$-solution. Now assume that $m \geq 3$, $\alpha \in \mathcal{O}_K$, $\alpha \overline{\alpha}=2^{m}$ and $2 \ | \ \alpha$. Then let $\alpha^{\prime}=\frac{\alpha}{2} \in \mathcal{O}_K$, by our above analysis we see that the equations
\begin{equation} 
\begin{cases}
2^{m+2}=p_{1}^{2} X^2+p_2 Y^2+2 p_1 p_2 (Z^2+W^2)
\\[0.5 em]
XY+2BZW+A(W^2-Z^2)=0
\end{cases}
\end{equation}
would have a $\mathbb{Z}$-solution, a contradiction to the definition of $m$, so it follows that $2 \nmid \alpha$. The case of $m=1$ is straightforward, so we see that condition \ref{condition_2} holds.

Now we deal with condition \ref{condition_1}. This has essentially been done in the remark after the proof of theorem 2 in \cite{Feng_Liu}, for the sake of completeness we restate some vital steps and add some detailed analysis connected to our context here.

Let $\mathfrak{p}_{0}$ be a prime ideal of $\mathcal{O}_T$ lying above $2$, then $2 \mathcal{O}_{T}=\mathfrak{p}_{0} \mathfrak{p}_{1} \mathfrak{p}_{2} \mathfrak{p}_{3}$, where $\mathfrak{p}_{i}=\mathfrak{p}_{0}^{\sigma^{i}}$. Note also that $\sigma^{2}(x)=\overline{x}$ for all $x \in T$. Moreover, since $\mathfrak{p}_i$ is inert in $K$, we have that $2 \mathcal{O}_{K}=\mathfrak{P}_{0} \mathfrak{P}_{1} \mathfrak{P}_{2} \mathfrak{P}_{3}$ with $\mathfrak{P}_{i}=\mathfrak{p}_{i} \mathcal{O}_K$.

Let $l$ be the order of $[\mathfrak{p}_{0} \mathfrak{p}_{1}]$ in $\mathrm{Cl}(T)$, then it is shown in \cite{Feng_Liu} that $l$ is odd. Here we note that, in \cite{Feng_Liu}, it is deduced from ${(\mathfrak{p}_{0} \mathfrak{p}_{1})}^{l}=\alpha \mathcal{O}_{T}$ that $2^{l}=\alpha \overline{\alpha}$, which is in general incorrect, since the direct consequence of ${(\mathfrak{p}_{0} \mathfrak{p}_{1})}^{l}=\alpha \mathcal{O}_{T}$ is $2^{l} \mathcal{O}_{T}=\alpha \overline{\alpha} \mathcal{O}_{T}$, and thus there exists $u \in U_T$ such that $\alpha \overline{\alpha}=2^{l} \cdot u$. However, here we see that $u \in \mathbb{R}_{>0}$, hence $u \in U_{\mathbb{Q}(\sqrt{p_2})}$. Let $v>1$ be the fundamental unit of $\mathbb{Q}(\sqrt{p_2})$, then $u=v^s$ for some $s \in \mathbb{Z}$. Moreover, we have that
$$\mathrm{N}_{\mathbb{Q}(\sqrt{p_2}) / \mathbb{Q}}(\alpha \overline{\alpha})=(\alpha \sigma(\alpha)) \cdot \overline{(\alpha \sigma(\alpha))}=2^{2l} \cdot \mathrm{N}_{\mathbb{Q}(\sqrt{p_2}) / \mathbb{Q}}(u),$$
so $\mathrm{N}_{\mathbb{Q}(\sqrt{p_2}) / \mathbb{Q}}(u) > 0$. It is well-known that (see prop 2.1 of \cite{lemmermeyer2003higherdescentpellconics} for example) if $p$ is a prime such that $p \equiv 1 \pmod{4}$, then the equation $x^2-p y^2=-1$ has $\mathbb{Z}$-solution $(x,y)$, so there exists a unit of $\mathbb{Q}(\sqrt{p_2})$ with norm $-1$, so the fundamental unit $v$ must also have norm $-1$. Hence $s=2s_{0}$ and $u={(v^{s_0})}^{2}$, and let $\beta=\alpha \cdot v^{-s_0} \in \mathcal{O}_{T}$, then we have that $\beta \overline{\beta}=2^{l}$, which is enough to prove that $l$ is odd.

Let $G=\mathrm{Gal}(T / \mathbb{Q})$. Consider the action of $R=\mathbb{Z}[G] \big/ (1+\sigma^{2}) \simeq \mathbb{Z}[\mathrm{i}]$ on the class $[\mathfrak{p}_{0} \mathfrak{p}_{1}]$ and set $I=\{ \pi \in R \ | \ {[\mathfrak{p}_{0} \mathfrak{p}_{1}]}^{\pi}=1 \}$. Then since $R \simeq \mathbb{Z}[\mathrm{i}]$ is a PID, $I=\gamma R$ for some $\gamma=a+b \sigma$ with $a,b \geq 0$. (Here $\sigma$ is actually $\overline{\sigma}$, the image of $\sigma$ under the quotient map, for simplicity we still use the notation $\sigma$.) Then we have that $a+b \mathrm{i} \ | \ l$, so $a^2+b^2 \ | \  l$ and thus $a+b$ is odd since $l$ is. Moreover, in particular, we have that ${(\mathfrak{p}_{0} \mathfrak{p}_{1})}^{a+b \sigma}=\alpha \mathcal{O}_{T}$ for some $\alpha \in \mathcal{O}_{T}$ , so
$$\mathfrak{p}_{0}^{a+(a+b) \sigma+b \sigma^{2}}=\alpha \mathcal{O}_{T}.$$
Taking conjugates shows that
$$\alpha \overline{\alpha} \mathcal{O}_{T}=\mathfrak{p}_{0}^{(a+b)(1+\sigma+\sigma^{2}+\sigma^{3})} \mathcal{O}_{T}=2^{a+b} \mathcal{O}_{T}.$$
Proceed as above, we see that there exists $\beta \in \mathcal{O}_{T}$ such that $\beta \overline{\beta}=2^{a+b}$, and by expressing $\beta$ as the $\mathbb{Z}$-linear combination of $\eta_{i}$, we obtain the corresponding Diophantine equations, so by the definition of $m$ we have that $$a+b \geq m.$$

Let $x \in \mathcal{O}_{T}$ be such that $x \overline{x}=2^{m}$ and let $I=x \mathcal{O}_{T}$. Let $I=\mathfrak{p}_{0}^{\pi}$ where $\pi=a_0 + a_{1} \sigma + a_{2} \sigma^{2} + a_{3} \sigma^{3}$, $a_i \in \mathbb{Z}_{\geq 0}$. Since $2 \nmid x$, at least one of $a_0, a_1, a_2, a_3$ is $0$, and we may first assume that $a_3=0$, then it is shown in \cite{Feng_Liu} that $a_0+a_2=a_1=m$, and $a_0+a_2 \sigma \in I$. Thus we have that $a_{0}+a_{2} \sigma=(a+b \sigma) \cdot r$ for some $r \in R$. Put these into $\mathbb{Z}[\mathrm{i}]$, we have that
$$a_{0}+a_{2} \mathrm{i}=(a+b \mathrm{i}) \cdot \gamma,$$
where $\gamma \in \mathbb{Z}[\mathrm{i}]$. So we have that
$$a_{0}^{2}+a_{2}^{2}=(a^{2}+b^{2}) \cdot {|\gamma|}^{2}.$$
Note also that $a+b \geq m=a_{0}+a_{2}$. Therefore $(a^{2}+b^{2}) \cdot {|\gamma|}^{2}=a_{0}^{2}+a_{2}^{2} \leq {(a_0+a_2)}^{2} \leq {(a+b)}^{2} \leq 2(a^2+b^2)$, so $|\gamma| \leq \sqrt{2}$. However, if $|\gamma|=\sqrt{2}$, then $2 \ | \ a_{0}^{2}+a_{2}^{2}$, so $2 \ | \ a_{0}+a_{2}=m$, a contradiction to $m$ being odd. Thus, $|\gamma| < \sqrt{2}$, so $\gamma$ must be a unit in $\mathbb{Z}[\mathrm{i}]$.

Thus, if $a>0$ and $b>0$, then since $a_0 \geq 0$ and $a_2 \geq 0$, the only possibility is $\gamma=1$, and hence $a_0=a$, $a_2=b$. Now consider the $\pi$ defined above, we see that $\pi$ must be one of the following four:
$$a+m \sigma+ b \sigma^{2}, a \sigma+m \sigma^{2}+ b \sigma^{3}, a \sigma^{2}+m \sigma^{3}+b \ \text{or} \ a \sigma^{3}+m+b \sigma.$$
In terms of $I$, this means that $I$ must be one of the four:
$$\mathfrak{p}_{0}^{a} \mathfrak{p}_{1}^{m} \mathfrak{p}_{2}^{b}, \ \mathfrak{p}_{1}^{a} \mathfrak{p}_{2}^{m} \mathfrak{p}_{3}^{b}, \ \mathfrak{p}_{2}^{a} \mathfrak{p}_{3}^{m} \mathfrak{p}_{0}^{b} \ \ \text{or} \ \ \mathfrak{p}_{3}^{a} \mathfrak{p}_{0}^{m} \mathfrak{p}_{1}^{b}.$$

Thus for $\alpha \in \mathcal{O}_K$ such that $\alpha \overline{\alpha}=2^{m}$, consider the corresponding $\beta=\alpha \cdot \zeta_{2N}^{b}$ with $\beta \in \mathcal{O}_T$, $\beta \overline{\beta}=2^m$, we see that $\alpha \mathcal{O}_K=\beta \mathcal{O}_{K}$ must be one of the following four:
$$\mathfrak{P}_{0}^{a} \mathfrak{P}_{1}^{m} \mathfrak{P}_{2}^{b}, \ \mathfrak{P}_{1}^{a} \mathfrak{P}_{2}^{m} \mathfrak{P}_{3}^{b}, \ \mathfrak{P}_{2}^{a} \mathfrak{P}_{3}^{m} \mathfrak{P}_{0}^{b} \ \ \text{or} \ \ \mathfrak{P}_{3}^{a} \mathfrak{P}_{0}^{m} \mathfrak{P}_{1}^{b}.$$
Hence we see that if $\alpha_1, \alpha_2 \in \mathcal{O}_K$ and $\alpha_i \overline{\alpha_i}=2^m$, then $\alpha_i \mathcal{O}_K$ must be one of the above four ideals, and if $(\alpha_1) \neq (\alpha_2)$ then $(\alpha_1)$ and $(\alpha_2)$ must have different support. This shows that condition \ref{condition_1} holds in this case.

It remains to consider the case of one of $a$ and $b$ is $0$. Assume that $\alpha \in \mathcal{O}_K$ and $\alpha \overline{\alpha}=2^m$. In this case, there are two choices of $\gamma$ (for example when $b=0$, $\gamma$ can be chosen as $1$ or $\mathrm{i}$) and the result concerning $(\alpha)$ is that $(\alpha)$ must be one of the following four:
$$\mathfrak{P}_{0}^{m} \mathfrak{P}_{1}^{m}, \ \mathfrak{P}_{0}^{m} \mathfrak{P}_{3}^{m}, \ \mathfrak{P}_{1}^{m} \mathfrak{P}_{2}^{m} \ \ \text{or} \ \ \mathfrak{P}_{2}^{m} \mathfrak{P}_{3}^{m}.$$

Thus for $\alpha_1, \alpha_2 \in \mathcal{O}_K$ with $\alpha_{i} \overline{\alpha_i}=2^{m}$, $\alpha_i \mathcal{O}_K$ must be one of the above four ideals, so $(\alpha_1) \neq (\alpha_2)$ implies that $(\alpha_1)$ and $(\alpha_2)$ have different support, which again shows that condition \ref{condition_1} holds, and this completes the proof.
\end{proof}

\begin{example}
Pairs $(3,17), (3,41), (11,17), (19,41), (107,17)$ satisfy the assumptions of theorem \ref{thm_twoprimes_3and1}. The values of $m$ are $3,5,5,9,19$ respectively. So by theorem \ref{thm_twoprimes_3and1} we have the nonexistence results for $[n,2N]$ where $N=p_{1}^{e_1} p_{2}^{e_2}$ and $(n,p_1,p_2)$ are the following pairs:
$$(3,3,17), (5,3,41), (5,11,17), (9,19,41), (19,107,17).$$
\end{example}

\section{\texorpdfstring{Nonexistence of type $[1,2 \cdot 3^a \cdot 7^b]$ GBF}{Nonexistence of type [1,2N] GBF with 3 and 7 dividing N}} \label{section_extending_EPM}

In this section, we prove that generalized bent function of type $[1,2 \cdot 3^a \cdot 7^b]$ does not exist, where $a,b \in \mathbb{Z}_{> 0}$. Let $N=3^{a} \cdot 7^{b}$. Note that here $7$ is not self-conjugate $\mathrm{mod} \ N$. We will first consider the case of $a=b=1$, i.e. prove the nonexistence of type $[1,42]$ GBF, then generalize the result to cases of arbitrary $a,b$. Here we give a brief description of our idea.

Let $q \equiv 2 \ (\mathrm{mod} \ 4)$ and let $f$ be a GBF of type $[1,q]$. When $n=1$, let $e=N$, the element partition method considers the size of $A_{1}:=\{ \lambda \in \mathbb{Z} / q \mathbb{Z} \ | \ \alpha_{f}(\lambda)=\alpha_{f}(\lambda+e) \}$ and $A_{2}:=\{ \lambda \in \mathbb{Z} / q \mathbb{Z} \ | \ \alpha_{f}(\lambda)=-\alpha_{f}(\lambda+e) \}$, denoted by $a_1$ and $a_2$ respectively. By lemma \ref{lemma_dot_product_of_Fouriercoefficients}, we have that $\sum_{\lambda \in \mathbb{Z}_{q}} \alpha_{f}(\lambda) \cdot \overline{\alpha_{f}(\lambda+e)}=0$, so $a_1=a_2$. Argument concerning the symmetry of $A_1$ and $A_2$ shows that $2 \mid a_1$ and $2 \mid a_2$, so $q=a_1+a_2=2a_1$ is a multiple of $4$, which leads to a contradiction since $q=2N$ with $N$ odd. This is essentially due to Ikeda \cite{Ikeda}.

When not all prime divisors of $N$ are self-conjugate $\mathrm{mod} \ N$, but $\frac{\alpha_{f}(\lambda+e)}{\alpha_{f}(\lambda)}$ do not have too many possible values, we may still partition elements of $\mathbb{Z} / q \mathbb{Z}$ into different sets according to the value of $\frac{\alpha_{f}(\lambda+e)}{\alpha_{f}(\lambda)}$, consider the two arguments as above and derive a contradiction, therefore prove nonexistence results of certain types.

PARI/GP codes that confirm relevant assertions in this section are available at the file ``codes for section 5'' at the website mentioned at the end of section \ref{section_introduction}.

\subsection{\texorpdfstring{Nonexistence of type $[1,42]$ GBF}{Nonexistence of type [1,42] GBF}} \label{subsection_51}

In this subsection, let $K=\mathbb{Q}(\zeta_{21})$, where we fix $\zeta_{21}=e^{2 \pi \mathrm{i} / 21}$ and $\zeta_{42}=-\zeta_{21}$. Let $x=(1-\zeta_{21})(1-\zeta_{21}^{-1}) \in K^{+}$. By the table in the appendix of \cite{IntroductionToCyclotomicFields}, we know that $h(K^{+})=1$. By the same tables or \cite[Thm 11.1]{IntroductionToCyclotomicFields} we know that $h(K)=1$. Let $\tau \in \mathrm{Gal}(K / \mathbb{Q})$ denote the map of complex conjugation.

By fact \ref{factorization_rules_of_primes_in_cyclotomic_fields}, we have that 
$$7 \mathcal{O}_{K}={(\mathfrak{p}_1)}^{6}  {(\mathfrak{p}_2)}^{6}; \ 2 \mathcal{O}_K= \mathfrak{p}_3 \mathfrak{p}_4 \ \text{and} \ 3\mathcal{O}_K=\mathfrak{p}_{5}^{2}$$
where $\mathfrak{p}_1, \cdots, \mathfrak{p}_{5}$ are distinct prime ideals in $\mathcal{O}_K$. By fact \ref{QuadraticContained}, $\mathbb{Q}(\sqrt{-3})$ and $\mathbb{Q}(\sqrt{-7})$ are contained in $\mathbb{Q}(\zeta_3)$ and $\mathbb{Q}(\zeta_7)$ respectively, so they are contained in K. Hence $\frac{1 \pm \sqrt{-7}}{2} \in K$ and $\sqrt{-3} \in K$. Then it follows directly by the uniqueness of ideal factorization that $2 \mathcal{O}_K=(\frac{1+\sqrt{-7}}{2})(\frac{1-\sqrt{-7}}{2})$ and $3\mathcal{O}_K={(\sqrt{-3})}^{2}$, with $(\frac{1+\sqrt{-7}}{2})$, $(\frac{1-\sqrt{-7}}{2})$ and $(\sqrt{-3})$ all being prime ideals.

Since there does not exist any integer $s$ such that $7^{s} \equiv -1 \ (\mathrm{mod} \ 3)$, by lemma \ref{DecompositionGroup}, $\tau$ is not in the decomposition group of $7$, which means that $\tau(\mathfrak{p}_1)=\overline{\mathfrak{p}_1}=\mathfrak{p}_2$ and $7 \mathcal{O}_{K}={(\mathfrak{p}_1 \overline{\mathfrak{p}_1})}^{6}$. 
 
Since $h(K)=1$, we have that $\mathfrak{p}_1=(\beta)$ and $7 \mathcal{O}_{K}={(\beta)}^{6}  {(\overline{\beta})}^{6}$ for some $\beta \in \mathcal{O}_K$. A computation in PARI/GP via the command \texttt{bnfisprincipal} shows that one may take \footnote{Note that while the correctness of the command \texttt{bnfisprincipal} relies on the GRH, one may on the other hand, check that the computed $\beta$ has norm $7$, hence must generate a prime ideal lying above $7$. Therefore the correctness of the relevant assertions does not rely on the GRH.}
$$\beta=\zeta_{21}^{9}-\zeta_{21}^7+\zeta_{21}^{5}+\zeta_{21}^{2}.$$ 
Let $u=\frac{7}{{(\beta \overline{\beta})}^{6}}$, then $u$ is a unit in $K^{+}$. Moreover, computation in PARI/GP via the command \texttt{nfeltissquare} shows that $\sqrt{ux} \in K^{+}$, so by lemma \ref{criteria_for_A2} we have that $u=v \overline{v}$ for some $v \in U_K$.\footnote{By remark \ref{remark_construction_of_v}, here we may take $v=\sqrt{u (-\zeta_{21})}=-\zeta_{21}^{11} + 6 \cdot \zeta_{21}^{10} - 12 \cdot \zeta_{21}^{9} + 13 \cdot \zeta_{21}^{8} - 3 \cdot \zeta_{21}^{7} - 12 \cdot \zeta_{21}^{6} + 22 \cdot \zeta_{21}^{5} - 18 \cdot \zeta_{21}^{4} + 6 \cdot \zeta_{21}^{3} + 4 \cdot \zeta_{21}^{2} - 6 \cdot \zeta_{21} + 3$ by a computation in PARI/GP.} Moreover we have that $$\sqrt{-3}=2 \cdot \zeta_{21}^{7}+1; \ \sqrt{-7}=\zeta_{21}^{3}+\zeta_{21}^{6}-\zeta_{21}^{9}+\zeta_{21}^{12}-\zeta_{21}^{15}-\zeta_{21}^{18}$$ by fact \ref{fact_quadratic_represented_by_root_of_unity}.

Now we assume that $f: \mathbb{Z}_{42} \rightarrow \mathbb{Z}_{42}$ is a GBF of type $[1,42]$. So we know that for each $\lambda \in \mathbb{Z}_{42}$, $\alpha_{f}(\lambda):=\sum_{x=0}^{41} \zeta_{42}^{f(x)-\lambda x}$ has absolute value $\sqrt{42}$, i.e. $\alpha_{f}(\lambda) \overline{\alpha_{f}(\lambda)}=42$. The following lemma characterizes solutions to $x \overline{x}=42$ in $\mathcal{O}_K$.

 \begin{lemma} \label{solutions_to_eq_42}
If $x \in \mathcal{O}_K$ and $x \overline{x}=42$, then
$$x=\frac{1 \pm \sqrt{-7}}{2} \cdot \sqrt{-3} \cdot v \cdot \beta^{i} \cdot {(\overline{\beta})}^{6-i} \cdot \zeta_{42}^{j},$$
where $i \in \mathbb{Z}, \ 0 \leq i \leq 6$; $j \in \mathbb{Z}, \ 0 \leq j \leq 41$.
 \end{lemma}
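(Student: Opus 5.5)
The plan is to determine the ideal $(x)$ exactly by unique factorization, then control the remaining unit using lemma \ref{lemma_solution_is_unique_up_to_roots_of_unity} together with the element $v$ already constructed.

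\textbf{Step 1: the factorization of $(x)$.} Since $x\overline{x}=42$, the ideal $(x)$ is supported on primes above $2$, $3$ and $7$, and we have
$$(x)(\overline{x})=\mathfrak{p}_3\mathfrak{p}_4\,\mathfrak{p}_5^{2}\,(\mathfrak{p}_1\overline{\mathfrak{p}_1})^{6}.$$
The three rational primes are handled separately.

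\begin{itemize}
\item At $2$: the primes are $(\frac{1\pm\sqrt{-7}}{2})$, and they are complex conjugates of each other. Writing $(x)=\mathfrak{p}_3^{a}\mathfrak{p}_4^{b}\cdots$, the relation forces $a+b=1$. So exactly one of $\frac{1+\sqrt{-7}}{2}$ and $\frac{1-\sqrt{-7}}{2}$ divides $x$, each to the first power.
\item At $3$: $\mathfrak{p}_5=(\sqrt{-3})$ is fixed by $\tau$. Comparing exponents gives $2c=2$, so $\mathfrak{p}_5$ appears exactly once.
\item At $7$: put $(x)=\mathfrak{p}_1^{i}\mathfrak{p}_2^{k}$ at $7$. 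Since $\overline{\mathfrak{p}_1}=\mathfrak{p}_2$, the relation gives $i+k=6$ with $0\le i\le 6$.
\end{itemize}

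Because $h(K)=1$ and $\mathfrak{p}_1=(\beta)$, these combine to
$$(x)=\Big(\tfrac{1\pm\sqrt{-7}}{2}\cdot\sqrt{-3}\cdot\beta^{i}\overline{\beta}^{\,6-i}\Big).$$

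\textbf{Step 2: reduce to a unit equation.} Write $x=\gamma\cdot\varepsilon$ with $\varepsilon\in U_K$ and $\gamma=\frac{1\pm\sqrt{-7}}{2}\sqrt{-3}\,\beta^{i}\overline{\beta}^{6-i}$. Then
$$\gamma\overline{\gamma}=2\cdot3\cdot(\beta\overline{\beta})^{6}=42/u,$$
using $u=7/(\beta\overline\beta)^6$. Combined with $x\overline{x}=42$, this gives $\varepsilon\overline{\varepsilon}=u$.

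\textbf{Step 3: identify $\varepsilon$.} We already know from the PARI/GP computation (via lemma \ref{criteria_for_A2}) that $u=v\overline{v}$. Lemma \ref{lemma_solution_is_unique_up_to_roots_of_unity} then gives $\varepsilon=v\cdot w$ with $w\in W_K$. By fact \ref{roots_of_unity_in_cyclotomic_fields}, every such $w$ is of the form $\zeta_{42}^{j}$ with $0\le j\le 41$, which yields the stated form of $x$.

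\textbf{Main obstacle.} The only delicate point is Step 1's bookkeeping, namely correctly using how conjugation acts on each prime. Conjugation swaps the two primes above $2$, fixes $\mathfrak{p}_5$, and swaps $\mathfrak{p}_1$ with $\mathfrak{p}_2$. It is this swap, coming from the non-self-conjugacy of $7$, that produces the seven choices of $i$ instead of a single forced exponent. Everything else is a direct citation of the earlier lemmas and the stated computations.
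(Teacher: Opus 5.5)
Your proposal is correct and follows essentially the same route as the paper. In both, you pin down $(x)$ by unique factorization using how complex conjugation acts on the primes above $2$, $3$ and $7$, then reduce to the unit equation $\varepsilon\overline{\varepsilon}=u=v\overline{v}$ and conclude with Lemma \ref{lemma_solution_is_unique_up_to_roots_of_unity} and Fact \ref{roots_of_unity_in_cyclotomic_fields}. The only cosmetic difference is that the paper removes $\sqrt{-3}$ first via Lemma \ref{lemma_self_conj_reduce_one_prime_factor} and then the $2$-part, whereas you read off all the exponents at once.
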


\begin{proof}
Since $3$ satisfies the self-conjugation condition, by lemma \ref{lemma_self_conj_reduce_one_prime_factor} we have that $y:=\frac{x}{\sqrt{-3}} \in \mathcal{O}_K$ and $y \overline{y}=14$. By the uniqueness of prime ideal factorization, we see that exactly one of $\frac{1+\sqrt{-7}}{2}$ and $\frac{1-\sqrt{-7}}{2}$ divides $y$. Let $z=y \Big/ \frac{1 \pm \sqrt{-7}}{2}$, then $z \overline{z}=7$, and it follows that $(z)=\big(\beta^{i} \cdot {(\overline{\beta})}^{6-i} \big)$ for some $0 \leq i \leq 6$. Let $t \in U_K$ be the unit such that $z=t \cdot \beta^{i} \cdot {(\overline{\beta})}^{6-i}$. Then $7=z \overline{z}=t \overline{t} {(\beta \overline{\beta})}^{6}=\frac{7 t \overline{t}}{v \overline{v}}$, so $t \overline{t}=v \overline{v}$. By lemma \ref{lemma_solution_is_unique_up_to_roots_of_unity} and fact \ref{roots_of_unity_in_cyclotomic_fields}, we see that $t=v \cdot \zeta_{42}^{j}$ for some $0 \leq j \leq 41$. Combining all these together completes the proof of the lemma. 
\end{proof}

Let $e=21$, then $e$ is an element of order $2$ in $\mathbb{Z}_{42}$. As mentioned above, we investigate what values may $\frac{\alpha_{f}(\lambda+e)}{\alpha_{f}(\lambda)}$ take.

\begin{lemma} \label{lemma_step_1_of_sets_determination}
Let $g$ be a type $[1,42]$ GBF such that $\alpha_{g}(0)=\frac{1 \pm \sqrt{-7}}{2} \cdot \sqrt{-3} \cdot v \cdot \beta^{i} \cdot {(\overline{\beta})}^{6-i}$ for some $0 \leq i \leq 6$. Then $\alpha_{g}(e)$ must be one of the following:
$$ \alpha_{g}(0) \cdot \{ 1,-1,x_1,x_1^{-1},-x_1,-x_1^{-1},x_2,x_2^{-1},-x_2,-x_2^{-1} \},$$
where $x_1=\beta^{-3} \cdot (\overline{\beta})^{3} \cdot \zeta_{42}^{5}$, $x_2=x_{1}^{2}$. 
\end{lemma}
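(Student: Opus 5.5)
The plan is to constrain $\alpha_g(e)$ in two independent ways and then intersect the constraints.

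First, by Lemma \ref{solutions_to_eq_42}, $\alpha_g(e)$ can be written as
$$\alpha_g(e)=\tfrac{1\pm\sqrt{-7}}{2}\cdot\sqrt{-3}\cdot v\cdot\beta^{i'}(\overline\beta)^{6-i'}\zeta_{42}^{j'}.$$
Second, since $e$ has order $2$, Section \ref{section_review_EPM} gives
$$\alpha_g(0)+\alpha_g(e)=2\sum_{x\ \mathrm{even}}\zeta_{42}^{g(x)}.$$
This sum is divisible by $2$.

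The first step is to show that the factor $\frac{1\pm\sqrt{-7}}{2}$ is the same for $\alpha_g(0)$ and $\alpha_g(e)$. The prime ideals $(\frac{1+\sqrt{-7}}{2})$ and $(\frac{1-\sqrt{-7}}{2})$ are the two primes above $2$. Each of $\alpha_g(0)$ and $\alpha_g(e)$ is divisible by exactly one of them. The same-$2$-support argument of Section \ref{section_review_EPM} then forces them to be divisible by the same one.

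Hence the ratio has the form
$$r:=\alpha_g(e)/\alpha_g(0)=\beta^{k}(\overline\beta)^{-k}\zeta_{42}^{j},\qquad k=i'-i\in\{-6,\dots,6\}.$$
Note that $\beta^k\overline\beta^{-k}=(\beta/\overline\beta)^k$ has absolute value $1$. So we have
$$\alpha_g(0)(1+r)\in 2\mathcal O_K.$$
Write $\alpha_g(0)=\pi\cdot\gamma$, where $\pi=\frac{1\pm\sqrt{-7}}{2}$ and $\gamma$ is coprime to $2$. The condition becomes that $1+r$ is divisible by the other prime $\overline\pi$ above $2$, and $1+r$ is divisible by $\pi^0$ trivially. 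Since $\pi\overline\pi=2$ and $\pi\parallel\alpha_g(0)$, the precise condition is
$$\overline{\pi}\mid (1+r)\quad\text{in the localization at }\overline\pi.$$
Here $r$ is only a $7$-unit, not an algebraic integer. So I will work modulo the prime $\overline\pi$, which is coprime to $\beta$ and $\overline\beta$. Divisibility of $1+r$ by $\overline\pi$ is then the congruence
$$\beta^k\zeta_{42}^j\equiv-\overline\beta^{k}\pmod{\overline\pi}.$$

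The second step is to make this finite check. The residue field $\mathcal O_K/\overline\pi$ has $2^{f}$ elements, where $f=\mathrm{ord}_{21}(2)=6$, so it is $\mathbb F_{64}$. For each $k\in\{-6,\dots,6\}$ and each $j\in\{0,\dots,41\}$, I test whether
$$\beta^k\zeta_{42}^j+\overline\beta^k\equiv 0 \pmod{\overline\pi}.$$
Both sign choices for $\pi$ must be checked. Complex conjugation swaps the two primes above $2$, which should give symmetric answers.

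The main obstacle is that this condition alone presumably allows more ratios than the ten listed. Each $k$ will typically admit several $j$, because $\zeta_{42}$ has order $21$ in $\mathbb F_{64}^*$ and $\mathbb F_{64}^*$ has order $63$.

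To cut the list down, I will add the constraint that $\alpha_g(e)$ is itself a value $\alpha_g(\lambda)$, so it is an actual character sum. Concretely, I will use the congruence
$$\alpha_g(0)-\alpha_g(e)=2\sum_{x\ \mathrm{odd}}\zeta_{42}^{g(x)},$$
which is equally divisible by $2$. I will also use the congruences modulo $\sqrt{-3}$ and modulo $\mathfrak p_1,\mathfrak p_2$ coming from
$$\alpha_g(\lambda)\equiv\sum_x\zeta_{42}^{g(x)}\cdot(\text{stuff}).$$
If these still do not suffice, I will use the fact that $1+r$ must be divisible by $\overline\pi$ \emph{as an element}: multiply $1+r$ by $\overline\beta^{|k|}$ or $\beta^{|k|}$ to get an algebraic integer, and impose divisibility by $2$, not merely by $\overline\pi$, where applicable. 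In PARI/GP this amounts to enumerating the $13\cdot42$ candidates $(k,j)$, applying these divisibility tests with \texttt{nfeltdivrem}/\texttt{idealval}, and verifying that the survivors are exactly
$$r\in\{\pm1,\ \pm x_1^{\pm1},\ \pm x_2^{\pm1}\},\qquad x_1=\beta^{-3}\overline\beta^{3}\zeta_{42}^5,\quad x_2=x_1^2.$$
These correspond to $k\in\{0,\pm3,\pm6\}$.

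I expect the hardest part to be justifying rigorously why odd values of $k$, and $k=\pm1,\pm2,\pm4,\pm5$, are excluded. My first attempt will be the residue computation in $\mathbb F_{64}$ combined with the requirement that $\pi\overline\pi=2$ divide $\alpha_g(0)(1+r)$ as an integral element.
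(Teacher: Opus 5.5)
Your first step and the reduction to the single condition $v_{\overline\pi}(1+r)\ge 1$ are correct, and they are the paper's reduction. Clearing denominators by $\beta^{|k|}$ or $\overline\beta^{\,|k|}$ turns it into $\overline\pi\mid\gamma$ with $\gamma=\beta^{l}+\overline\beta^{\,l}\zeta_{42}^{j}$, $l>0$, exactly as in the paper. The proposal goes wrong at the ``main obstacle'': this condition alone already leaves exactly the ten ratios.

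The idea you are missing is the symmetry $\zeta_{42}^{j}\,\overline\gamma=\gamma$. It shows that the ideal $(\gamma)$ is fixed by complex conjugation, which swaps $\pi$ and $\overline\pi$. Hence $\overline\pi\mid\gamma$ forces $\pi\mid\gamma$, i.e.\ $2\mid\gamma$, and the two sign choices for $\pi$ give identical, not merely symmetric, answers. The paper uses this to pass to the test $2\mid\beta^{l}+\overline\beta^{\,l}\zeta_{42}^{j}$ for $0<l\le 6$, $0\le j\le 20$; only $(3,5)$ and $(6,10)$ survive. It recovers the rest from $j\mapsto j+21$ (i.e.\ $r\mapsto -r$) and from the reciprocal symmetry $r\mapsto r^{-1}$ (the case $i<k$).

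Your residue-field test is an equivalent way to run the same computation, and it shows why your heuristic fails. Modulo $\overline\pi$ the condition reads $\rho^{k}\omega^{j}=1$ in $\mathbb{F}_{64}^{*}$. Here $\rho$ is the image of $\beta/\overline\beta$, and $\omega$, the image of $\zeta_{42}$, has order $21$. So for each $k$ there is either no $j$ or exactly the pair $j_0,\,j_0+21$ (the ratios $\pm r$). A solution exists iff $\rho^{k}$ lies in the index-$3$ subgroup of cubes. Since $\rho$ is not a cube (this is what the failure of $l=1$ in the computation says), the survivors are exactly $k\in\{0,\pm3,\pm6\}$.

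Consequently the extra constraints you propose are unnecessary, and as written some are not sound steps.
The congruence from $\alpha_g(0)-\alpha_g(e)$ is redundant: $v_{\overline\pi}(2r)=1$, so $1-r\equiv 1+r \pmod{\overline\pi}$.
The congruences modulo $\sqrt{-3}$ and $\mathfrak p_1,\mathfrak p_2$ are conditions on $g$, not on the pair $(k,j)$, and you do not say how they would become a finite test on $r$.
Most importantly, ``impose divisibility by $2$, not merely by $\overline\pi$'' is an unproved criterion in your write-up. The only necessary condition you derived is divisibility by $\overline\pi$, and discarding candidates with the stronger test is legitimate only after the conjugation argument above.

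Either add that argument, or simply run your $\overline\pi$-test in $\mathbb{F}_{64}$ and drop the fallbacks. In both cases the proof is complete and agrees with the paper's.
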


\begin{proof}
Note that $\alpha_{g}(0)+\alpha_{g}(e)=2 \cdot \displaystyle \sum_{x \ \text{even}} \zeta_{42}^{g(x)}$, so $2 \ \Big| \ \Big(\alpha_{g}(0)+\alpha_{g}(e) \Big)$ in $\mathcal{O}_K$, and it follows that $\frac{1 + \sqrt{-7}}{2} \ \Big| \ \alpha_{g}(e)$ iff $\frac{1 + \sqrt{-7}}{2} \ \Big| \ \alpha_{g}(0)$. Then by lemma \ref{solutions_to_eq_42}, $\alpha_{g}(e)=\frac{1 \pm \sqrt{-7}}{2} \cdot \sqrt{-3} \cdot v \cdot \beta^{k} \cdot {(\overline{\beta})}^{6-k} \cdot \zeta_{42}^{j}$ for some $0 \leq k \leq 6$, $0 \leq j \leq 41$, and $\frac{\alpha_{g}(e)}{\alpha_{g}(0)}=\beta^{k-i} \cdot {(\overline{\beta})}^{i-k} \cdot \zeta_{42}^{j}$ since they have the same $\frac{1 \pm \sqrt{-7}}{2}$ term.

Now $2 \ \big| \ \big( \alpha_{g}(0)+\alpha_{g}(e) \big)$ becomes $\frac{1 \pm \sqrt{-7}}{2} \ \big| \ \Big(\beta^{i} \cdot {(\overline{\beta})}^{6-i}+\beta^{k} \cdot {(\overline{\beta})}^{6-k} \cdot \zeta_{42}^{j} \Big)$, with the plus or minus being opposite to that of $\alpha_{g}(0)$. If $i=k$, then $j=0$ or $j=21$ by lemma \ref{lemma_(1+rootofunity)_divides_prime_above_2}, and $\alpha_{g}(0)=\pm \alpha_{g}(e)$. If $i>k$, then let $l=i-k$, so $0 < l \leq 6$ and we have that $\frac{1 \pm \sqrt{-7}}{2} \ \big| \ \big({\beta}^{l}+{\overline{\beta}}^{l} \zeta_{42}^{j} \big)$. If here $\frac{1 - \sqrt{-7}}{2} \ \big| \ \big({\beta}^{l}+{\overline{\beta}}^{l} \zeta_{42}^{j} \big)$, then taking conjugate yields that $\frac{1+\sqrt{-7}}{2} \ \big| \ \big({\overline{\beta}}^{l}+\beta^{l} \zeta_{42}^{-j} \big)$, and multiplying $\zeta_{42}^{j}$ on the right hand side shows that $\frac{1+\sqrt{-7}}{2} \ \big| \ \big({\beta}^{l}+{\overline{\beta}}^{l} \zeta_{42}^{j} \big)$, so we have that 

$$2 \ \big| \ \big({\beta}^{l}+{\overline{\beta}}^{l} \zeta_{42}^{j} \big) \ \text{and} \ \frac{\alpha_{g}(e)}{\alpha_{g}(0)}=\beta^{-l} \cdot {\overline{\beta}}^{l} \cdot \zeta_{42}^{j}.$$

If $\frac{1 + \sqrt{-7}}{2} \ \big| \ \big({\beta}^{l}+{\overline{\beta}}^{l} \zeta_{42}^{j} \big)$, then an exactly same argument shows that the above result still holds, so the above result holds regardless of the term dividing $\alpha_{g}(0)$.

Similarly, when $i<k$, set $r=k-i$, then $0 < r \leq 6$ and we have that
$$2 \ \big| \ \big({\overline{\beta}}^{r}+\beta^{r} \zeta_{42}^{j} \big) \ \text{and} \ \frac{\alpha_{g}(e)}{\alpha_{g}(0)}=\beta^{r} \cdot {(\overline{\beta})}^{-r} \cdot \zeta_{42}^{j}.$$

Note that for $l>0$,
$$2 \ \big| \ \big({\beta}^{l}+{\overline{\beta}}^{l} \zeta_{42}^{j} \big) \ \text{iff} \ 2 \ \big| \ \big({\beta}^{l} \zeta_{42}^{-j} +{\overline{\beta}}^{l} \big),$$
so let $T_1=\Big\{ (l,j) \ \big| \ 2 \mid \big({\beta}^{l}+{\overline{\beta}}^{l} \zeta_{42}^{j} \big) \Big\}$, $T_2=\Big\{ (r,j) \ \big| \ 2 \mid \big({\overline{\beta}}^{r}+\beta^{r} \zeta_{42}^{j} \big) \Big\}$, we have that $T_2=\big\{ (l,-j) \ \big| \ (l,j) \in T_1 \big\}$. And note that for $(l,j) \in T_1$ and $(l,-j) \in T_2$, the corresponding value of $\frac{\alpha_{g}(e)}{\alpha_{g}(0)}$ are $\beta^{-l} \cdot {\overline{\beta}}^{l} \cdot \zeta_{42}^{j}$ and $\beta^{l} \cdot {(\overline{\beta})}^{-l} \cdot \zeta_{42}^{-j}$, which are reciprocals of each other. Therefore, let $S$ be the set of all possible values of $\frac{\alpha_{g}(e)}{\alpha_{g}(0)}$, then it suffices to find all $(l,j)$ such that $2 \ \big| \ \big({\beta}^{l}+\overline{\beta}^{l} \zeta_{42}^{j} \big)$ to determine $S$. Namely, let $S_{1}=\big\{ \ \beta^{-l} \cdot \overline{\beta}^{l} \cdot \zeta_{42}^{j} \ \big| \ (l,j) \in T_1 \big\}$, then
$$S=(\cup_{x \in S_1} \{ x,x^{-1} \}) \cup \{ 1,-1 \}.$$
Also note that $(l,j) \in T_1$ iff $(l,21+j) \in T_1$, and the corresponding value of $\frac{\alpha_{g}(e)}{\alpha_{g}(0)}$ are opposite numbers of each other, so it only suffices to check $0 \leq j \leq 20$, i.e. let
$$S_{0}=\Big\{ \beta^{-l} \cdot \overline{\beta}^{l} \cdot \zeta_{42}^{j} \ \Big| \ 0 \leq j \leq 20, 0<l \leq 6, 2 \ \big| \ \big({\beta}^{l}+\overline{\beta}^{l} \zeta_{42}^{j} \big) \Big\},$$
then we have that
$$S=(\cup_{x \in S_{0}} \{ x,x^{-1},-x,-x^{-1} \}) \cup \{ 1,-1 \}.$$

By using the \texttt{nfalgtobasis} command in PARI/GP, we obtain the coefficient vector of ${\beta}^{l}+\overline{\beta}^{l} \zeta_{42}^{j}$ represented as the linear combination of an integral basis of $\mathcal{O}_K$. Checking whether $2 \ \big| \ \big({\beta}^{l}+\overline{\beta}^{l} \zeta_{42}^{j} \big)$ is equivalent to checking whether the coefficient vector $\mathrm{mod} \ 2$ is equal to the zero vector of same dimension, which can be done easily in PARI/GP. The computation in PARI/GP shows that for $0<l \leq 6$ and $0 \leq j \leq 20$,
$$2 \ \Big| \ \Big({\beta}^{l}+\overline{\beta}^{l} \zeta_{42}^{j} \Big) \ \text{iff} \ (l,j) \in \big\{ (3,5), (6,10) \big\}.$$
 
Thus $S_{0}=\{ x_1, x_{1}^{2} \}$, where $x_1=\beta^{-3} \cdot (\overline{\beta})^{3} \cdot \zeta_{42}^{5}$, and the lemma follows from $S=(\cup_{x \in S_{0}} \{ x,x^{-1},-x,-x^{-1} \}) \cup \{ 1,-1 \}$.
\end{proof}

Let $S=\{ 1,-1,x_1,x_1^{-1},-x_1,-x_1^{-1},x_2,x_2^{-1},-x_2,-x_2^{-1} \}$ be the set in lemma \ref{lemma_step_1_of_sets_determination}. The following lemma transfers the result in lemma \ref{lemma_step_1_of_sets_determination} into $\frac{\alpha_{f}(\lambda+e)}{\alpha_{f}(\lambda)}$ and determines all possible values of $\frac{\alpha_{f}(\lambda+e)}{\alpha_{f}(\lambda)}$.

\begin{lemma} \label{lemma_step_2_of_sets_determination}
Let $f: \mathbb{Z}_{42} \rightarrow \mathbb{Z}_{42}$ be a GBF. Then $\alpha_{f}(\lambda+e)=\alpha_{f}(\lambda) \cdot s$ for some $s \in S$.
\end{lemma}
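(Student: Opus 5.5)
The plan is to reduce the general pair $(\lambda,\lambda+e)$ to the normalized situation of Lemma \ref{lemma_step_1_of_sets_determination} by modifying $f$. The two operations are multiplying by a root of unity and shifting the frequency. Both change the ratio $\alpha(\lambda+e)/\alpha(\lambda)$ in a controlled way.

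Fix $\lambda \in \mathbb{Z}_{42}$. Define $g(x)=f(x)-\lambda x+c$ for a constant $c\in\mathbb{Z}_{42}$ to be chosen. Then $g$ is again a GBF of type $[1,42]$, because $\alpha_g(\mu)=\zeta_{42}^{c}\,\alpha_f(\mu+\lambda)$ for all $\mu$, so $|\alpha_g(\mu)|^2=42$. In particular
$$\alpha_g(0)=\zeta_{42}^{c}\alpha_f(\lambda), \qquad \alpha_g(e)=\zeta_{42}^{c}\alpha_f(\lambda+e),$$
and therefore $\alpha_g(e)/\alpha_g(0)=\alpha_f(\lambda+e)/\alpha_f(\lambda)$.

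Next I would choose $c$ so that the hypothesis of Lemma \ref{lemma_step_1_of_sets_determination} holds. Lemma \ref{solutions_to_eq_42} applies to $\alpha_f(\lambda)$, which lies in $\mathcal{O}_K$ since $\zeta_{42}=-\zeta_{21}\in K$ and satisfies $\alpha_f(\lambda)\overline{\alpha_f(\lambda)}=42$. It gives
$$\alpha_f(\lambda)=\frac{1\pm\sqrt{-7}}{2}\sqrt{-3}\,v\,\beta^{i}\overline{\beta}^{6-i}\zeta_{42}^{j}$$
for some $0\le i\le 6$ and $0\le j\le 41$. Taking $c=-j$ makes $\alpha_g(0)$ exactly of the form required in Lemma \ref{lemma_step_1_of_sets_determination}. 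That lemma then gives $\alpha_g(e)\in\alpha_g(0)\cdot S$. Transporting back through the ratio identity above yields $\alpha_f(\lambda+e)=s\,\alpha_f(\lambda)$ for some $s\in S$, as claimed.

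The only point needing care is that the Fourier coefficients are unchanged up to the shift and the root-of-unity factor, in particular that the shift works modulo $42$ with $\lambda\cdot x$ read in $\mathbb{Z}_{42}$. This is routine bookkeeping, so I do not expect a real obstacle. The substantive content is already contained in Lemmas \ref{solutions_to_eq_42} and \ref{lemma_step_1_of_sets_determination}.
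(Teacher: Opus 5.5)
Your proposal is correct and is essentially the paper's argument. The paper does it in two steps: it first subtracts the constant $j$ to show that Lemma \ref{lemma_step_1_of_sets_determination} holds for every type $[1,42]$ GBF, and then applies that to $f(x)-\lambda x$. You merge both modifications into the single function $f(x)-\lambda x-j$.
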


\begin{proof}
For a type $[1,42]$ GBF $h$, assume that
$$\alpha_{h}(0)=\frac{1 \pm \sqrt{-7}}{2} \cdot \sqrt{-3} \cdot v \cdot \beta^{i} \cdot {(\overline{\beta})}^{6-i} \cdot \zeta_{42}^{j},$$
then $g(x)=h(x)-j$ is a GBF satisfying the condition of lemma \ref{lemma_step_1_of_sets_determination}, and $\alpha_{g}(\lambda)=\zeta_{42}^{-j} \cdot \alpha_{h}(\lambda)$ for all $\lambda \in \mathbb{Z}_{42}$. So $\alpha_{h}(e)=\zeta_{42}^{j} \cdot \alpha_{g}(e)=\zeta_{42}^{j} \cdot \alpha_{g}(0) \cdot s=\alpha_{h}(0) \cdot s$ for some $s \in S$, which means that the conclusion of lemma \ref{lemma_step_1_of_sets_determination} holds for all type $[1,42]$ GBF.

For a GBF $f$, consider $g(x)=f(x)-\lambda x$, which is also a GBF, so $\alpha_{g}(e)=\alpha_{g}(0) \cdot s$ for some $s \in S$. But $\alpha_{g}(e)=\alpha_{f}(\lambda+e)$, $\alpha_{g}(0)=\alpha_{f}(\lambda)$, so we have that 
$$\alpha_{f}(\lambda+e)=\alpha_{f}(\lambda) \cdot s$$
for some $s \in S$.
\end{proof}

Now we are in a position to prove the nonexistence of type $[1,42]$ GBF. As mentioned above, we partition $\mathbb{Z}_{42}$ into different sets according to the value of $\frac{\alpha_{f}(\lambda+e)}{\alpha_{f}(\lambda)}$. Hence we define the following sets:

\begin{equation*}
\begin{aligned}
    & A_1=\{ \lambda \in \mathbb{Z}_{42} \ | \ \alpha_{f}(\lambda+e)=\alpha_{f}(\lambda) \}, \\
    & A_2=\{ \lambda \in \mathbb{Z}_{42} \ | \ \alpha_{f}(\lambda+e)=-\alpha_{f}(\lambda) \}, \\
    & A_3=\{ \lambda \in \mathbb{Z}_{42} \ | \ \alpha_{f}(\lambda+e)=\alpha_{f}(\lambda) \cdot x_1 \}, \\
    & A_4=\{ \lambda \in \mathbb{Z}_{42} \ | \ \alpha_{f}(\lambda+e)=\alpha_{f}(\lambda) \cdot x_{1}^{-1} \}, \\
    & A_5=\{ \lambda \in \mathbb{Z}_{42} \ | \ \alpha_{f}(\lambda+e)=\alpha_{f}(\lambda) \cdot (-x_1) \}, \\
    & A_6=\{ \lambda \in \mathbb{Z}_{42} \ | \ \alpha_{f}(\lambda+e)=\alpha_{f}(\lambda) \cdot (-x_{1}^{-1}) \}, \\
    & A_7=\{ \lambda \in \mathbb{Z}_{42} \ | \ \alpha_{f}(\lambda+e)=\alpha_{f}(\lambda) \cdot x_2 \}, \\
    & A_8=\{ \lambda \in \mathbb{Z}_{42} \ | \ \alpha_{f}(\lambda+e)=\alpha_{f}(\lambda) \cdot x_{2}^{-1} \}, \\
    & A_9=\{ \lambda \in \mathbb{Z}_{42} \ | \ \alpha_{f}(\lambda+e)=\alpha_{f}(\lambda) \cdot (-x_2) \}, \\
    & A_{10}=\{ \lambda \in \mathbb{Z}_{42} \ | \ \alpha_{f}(\lambda+e)=\alpha_{f}(\lambda) \cdot (-x_{2}^{-1}) \}.
\end{aligned}
\end{equation*}

Let $a_{i}=|A_i|$. Since $\lambda \in A_{j}$ iff $\lambda+e \in A_{j}$ for $j=1,2$, we have that $2 \ | \ a_{j}$ for $j=1,2$. Similarly we have that $a_3=a_4$, $a_5=a_6$, $a_7=a_8$ and $a_9=a_{10}$, and we denote $a_1=2b_1$, $a_2=2b_2$, $a_3=a_4=b_3$, $a_5=a_6=b_4$, $a_7=a_8=b_5$, $a_{9}=a_{10}=b_6$. Since $\sum_{i=1}^{10} a_i=42$, we have that $\sum_{i=1}^{6} b_i = 21$.

By lemma \ref{lemma_dot_product_of_Fouriercoefficients}, we have that $\displaystyle\sum_{\lambda \in \mathbb{Z}_{42}} \alpha_{f}(\lambda) \cdot \overline{\alpha_{f}(\lambda+e)}=0$. By the definition of generalized bent function, $\alpha_{f}(\lambda) \cdot \overline{\alpha_{f}(\lambda)}=42$ for all $\lambda \in \mathbb{Z}_{42}$. Therefore we have that 

\begin{equation} \label{Fourier_coe_innerproduct}
\begin{aligned}
& 42 \cdot a_1-42 \cdot a_2+ 42 \cdot \overline{x_1} \cdot a_3+42 \cdot \overline{x_{1}^{-1}} \cdot a_4- 42 \cdot \overline{x_1} \cdot a_5  -42 \cdot \overline{x_{1}^{-1}} \cdot a_6 \\
&+ 42 \cdot \overline{x_2} \cdot a_7+42 \cdot \overline{x_{2}^{-1}} \cdot a_8 - 42 \cdot \overline{x_2} \cdot a_9-42 \cdot \overline{x_{2}^{-1}} \cdot a_{10}=0.
\end{aligned}
\end{equation}

Note that $\overline{x_{1}}=x_{1}^{-1}$, $\overline{x_{2}}=x_{2}^{-1}$ and substitute $a_i$ by $b_j$, so (\ref{Fourier_coe_innerproduct}) becomes the following equation:
$$84 \cdot b_1-84 \cdot b_2+ 42 \cdot (x_{1}+x_{1}^{-1}) \cdot b_3- 42 \cdot (x_1+x_{1}^{-1}) \cdot b_4+ 42 \cdot (x_2+x_{2}^{-1}) \cdot b_5- 42 \cdot (x_2+x_{2}^{-1}) \cdot b_6=0.$$

Computation in PARI/GP shows that $x_1+x_{1}^{-1}=\frac{4}{7} \cdot \zeta_{21}^{11} + \frac{4}{7} \cdot \zeta_{21}^9 + \frac{4}{7} \cdot \zeta_{21}^8 - \frac{4}{7} \cdot \zeta_{21}^7 - \frac{4}{7} \cdot \zeta_{21}^4 + \frac{8}{7} \cdot \zeta_{21}^2 - \frac{4}{7} \cdot \zeta_{21} - \frac{2}{7}$ and $x_{2}+x_{2}^{-1}=-\frac{2}{7}$. Hence, consider the constant term, we have that
 $$84 \cdot (b_1-b_2)+ 42 \cdot (b_3-b_4) \cdot (-\frac{2}{7}) + 42 \cdot (b_5-b_6) \cdot (-\frac{2}{7}) =0. $$

This shows that $7 \cdot (b_1-b_2)-(b_3-b_4)-(b_5-b_6)=0$, so 
\begin{equation} \label{mod_contradiction}
\sum_{i=1}^{6} b_i \equiv 0 \ (\mathrm{mod} \ 2).
\end{equation}

But $\sum_{i=1}^{6} b_i=21$, a contradiction to (\ref{mod_contradiction}). This proves the following theorem:

\begin{theorem}
Generalized bent function of type $[1,42]$ does not exist.
\end{theorem}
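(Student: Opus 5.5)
The statement is essentially the conclusion of the argument just carried out, so the proof assembles the preceding lemmas into one contradiction. Suppose $f:\mathbb{Z}_{42}\to\mathbb{Z}_{42}$ is a GBF and set $e=21$.

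First, lemma \ref{solutions_to_eq_42} describes every $\alpha_f(\lambda)$ explicitly. From this, lemma \ref{lemma_step_2_of_sets_determination} (which rests on lemma \ref{lemma_step_1_of_sets_determination} and the PARI/GP divisibility check) gives $\alpha_f(\lambda+e)=s\,\alpha_f(\lambda)$ for some $s\in S$. Hence $\mathbb{Z}_{42}$ is the disjoint union of the ten sets $A_1,\dots,A_{10}$. The union is disjoint because the ten elements of $S$ are pairwise distinct. I will check this briefly: $x_1$ and $x_2$ are not roots of unity, so none of them equals $\pm1$ or any other element of $S$.

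Second, I use the involution $\lambda\mapsto\lambda+e$ to constrain the sizes. If $\alpha_f(\lambda+e)=s\,\alpha_f(\lambda)$, then $\alpha_f(\lambda)=s^{-1}\alpha_f(\lambda+e)$. So the involution preserves $A_1$ and $A_2$, and it has no fixed points since $e\neq0$; therefore $a_1$ and $a_2$ are even. It swaps the sets labelled by $s$ and $s^{-1}$, which gives $a_3=a_4$, $a_5=a_6$, $a_7=a_8$ and $a_9=a_{10}$. Writing these in terms of $b_1,\dots,b_6$ as above, the count $\sum a_i=42$ becomes $\sum_{i=1}^6 b_i=21$.

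Third, lemma \ref{lemma_dot_product_of_Fouriercoefficients} with $v=e$, together with $|\alpha_f(\lambda)|^2=42$ and $\overline{s}=s^{-1}$ for $s\in S$, gives a linear relation with coefficients $x_1+x_1^{-1}$ and $x_2+x_2^{-1}$. I then extract the constant term with respect to a fixed power basis of $K$. Any consistent $\mathbb{Q}$-linear functional works, for instance the coefficient of $1$ in the basis $\{1,\zeta_{21},\dots,\zeta_{21}^{11}\}$ used by PARI/GP. Using the computed expressions, in which both constant terms are $-\tfrac{2}{7}$ (and $x_2+x_2^{-1}=-\tfrac{2}{7}$ exactly), this yields $7(b_1-b_2)=(b_3-b_4)+(b_5-b_6)$. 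Reducing mod $2$ gives $\sum b_i\equiv0\pmod 2$, which contradicts $\sum b_i=21$.

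The main obstacle is the reliability of the computational inputs. These are the exhaustive check that $(l,j)\in\{(3,5),(6,10)\}$ are the only solutions of $2\mid \beta^l+\overline\beta^{\,l}\zeta_{42}^j$, and the explicit value of $x_1+x_1^{-1}$. I would re-verify the second by hand through the minimal polynomial or trace: the trace of $x_1+x_1^{-1}$ is determined by the constant coefficient in the power basis, so comparing the trace gives an independent check. The arithmetic inputs themselves are already justified without GRH, since the norm of $\beta$ was checked to be $7$ and the unit $v$ was constructed explicitly.
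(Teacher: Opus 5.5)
Your proposal is correct and follows the paper's proof step for step. The steps are the partition of $\mathbb{Z}_{42}$ by the ten ratios in $S$, the involution $\lambda\mapsto\lambda+e$ giving $\sum_{i=1}^{6}b_i=21$, and the constant coefficient of the orthogonality relation giving $7(b_1-b_2)=(b_3-b_4)+(b_5-b_6)$; your check that the elements of $S$ are pairwise distinct (because $x_1$ is not a root of unity) is a small point the paper leaves implicit.

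Your asides are wrong in two places. It is not true that any $\mathbb{Q}$-linear functional works, and the trace is not determined by the constant coefficient. The paper's power-basis expression for $x_1+x_1^{-1}$ is exactly $\tfrac{2}{7}\sqrt{-3}\sqrt{-7}$, whose trace is $0$ even though its constant coefficient is $-\tfrac{2}{7}$. So the trace would annihilate the $b_3-b_4$ term and lose the parity contradiction. The constant coefficient works precisely because $84$ and $42\cdot(-\tfrac{2}{7})=-12$ both have $2$-adic valuation exactly $2$.
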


\begin{remark}
We explain why the above theorem is a new result. To the best of our knowledge, the case of $N$ having two prime factors $p_1 \equiv 3 \ (\mathrm{mod} \ 8)$ and $p_2 \equiv 7 \ (\mathrm{mod} \ 8)$ such that at least one of $p_1$ and $p_2$ is not self-conjugate $\mathrm{mod} \ N$ has been considered only in \cite{Feng_Liu_Ma} and \cite{Jiang_Deng}. But both of these two papers show the nonexistence of GBF by showing that there does not exist $\alpha \in \mathcal{O}_K$ such that $\alpha \overline{\alpha}=2N$, while in our case such $\alpha$ exists. For the same reason, all of our results in this section are new. Actually, taking $p_1=3$ and $p_2=7$ in the bound of theorem 3.1 of \cite{Feng_Liu_Ma} yields something strictly less than $1$, indicating that theorem 3.1 of \cite{Feng_Liu_Ma} does not contain our result.
\end{remark}

\subsection{\texorpdfstring{Nonexistence of type $[1,2 \cdot 3^{a} \cdot 7^{b}]$ GBF}{Nonexistence of type [1,2N] GBF with higher power of 3 and 7 dividing N}} \label{subsection_52}

In this subsection, we generalize the nonexistence result in subsection \ref{subsection_51} to the case of arbitrary $a$ and $b$, i.e. let $N=3^{a} \cdot 7^{b}$ where $a,b \in \mathbb{Z}_{>0}$, we will eventually show that GBF of type $[1,2N]$ does not exist.

Let $L=\mathbb{Q}(\zeta_{N})$, where $N=3^{a} \cdot 7^{b}$, $a,b \in \mathbb{Z}_{>0}$. Similar to subsection \ref{subsection_51}, we need to investigate solutions to $x \in \mathcal{O}_L$, $x \overline{x}=2N$ first. Since $3$ is self-conjugate $\mathrm{mod} \ N$, by lemma \ref{lemma_self_conj_reduce_one_prime_factor} we have that $y:=\frac{x}{{(\sqrt{-3})}^{a}} \in \mathcal{O}_L$ and $y \overline{y}=2 \cdot 7^{b}$. We use Schmidt's field descent method to show that such $y$ may be found in a much smaller field.

\begin{lemma} \cite[Thm 3.5]{def_self_conj_Schmidt}
Let $m,n$ be positive integers, and let $m=\prod_{i=1}^{t} p_i^{c_i}$ be the prime power decomposition of $m$. For each prime divisor $q$ of $n$ let
\begin{equation}
m_q=
\begin{cases}
\prod_{p_i \neq q} p_i, 
& \text{if } m \text{ is odd or } q=2, \\[0.5 em]
4 \prod_{p_i \neq 2,q} p_i, 
& \text{if } m \text{ is even.}
\end{cases}
\end{equation}
Let $\mathcal{D}(n)$ be the set of prime divisors of $n$. We define $F(m,n)=\prod_{i=1}^{t} p_i^{b_i}$ to be the minimum multiple of $\prod_{i=1}^{t} p_i$ such that, for every pair
$(i,q)$, $i\in\{1,\ldots,t\}$, $q\in \mathcal{D}(n)$, at least one of the
following conditions is satisfied:
\begin{enumerate}
    \item[(a)] $q=p_i$ and $(p_i,b_i)\neq (2,1)$,
    \item[(b)] $b_i=c_i$,
    \item[(c)] $q\neq p_i$ and $q^{\mathrm{ord}_{m_q}(q)} \not\equiv 1 \pmod{p_i^{b_i+1}}$, where $\mathrm{ord}_{m_q}(q)$ is the multiplicative order of $q$ $\mathrm{mod} \ m_q$.
\end{enumerate}
Then we have that: Let $m,n \in \mathbb{Z}_{>0}$. Assume that $x \overline{x}=n$ for some $x \in \mathbb{Z}[\zeta_{m}]$, then 
$$x \cdot \zeta_{m}^{j} \in \mathbb{Z}[\zeta_{F(m,n)}]$$
for some $j$.
\end{lemma}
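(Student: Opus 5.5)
This is Schmidt's field descent theorem, which the paper quotes from \cite[Thm 3.5]{def_self_conj_Schmidt}. To prove it from scratch, I would argue by induction: remove one prime-power layer at a time, descending from $\mathbb{Q}(\zeta_m)$ to $\mathbb{Q}(\zeta_{F(m,n)})$, and show at each step that $x$ times a suitable root of unity lies in the smaller field.

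First I reduce to a single step. Fix $i$ with $b_i<c_i$, and set $m'=m/p_i$, so that $p_i^2\mid m$ in the relevant range. The extension $\mathbb{Q}(\zeta_m)/\mathbb{Q}(\zeta_{m'})$ is then cyclic of degree $p_i$. Its Galois group is generated by $\sigma:\zeta_m\mapsto\zeta_m^{1+m'}$, and $\sigma$ fixes $\zeta_{m'}$ while multiplying $\zeta_m$ by a $p_i$-th root of unity. It suffices to show that $x\zeta_m^j\in\mathbb{Z}[\zeta_{m'}]$ for some $j$, whenever $x\overline{x}=n$ and conditions (a)--(c) hold at level $c_i-1\ge b_i$. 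The conditions are monotone in the exponent, so iterating the step reaches $F(m,n)$.

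For the single step, I consider $\sigma(x)/x$. Since $\sigma$ commutes with complex conjugation, $\sigma(x)\overline{\sigma(x)}=n$. The strategy is to show that the ideals $(x)$ and $(\sigma(x))$ coincide, prime by prime above each $q\mid n$.
\begin{itemize}
\item If $q=p_i$ (condition (a)), then $q$ is totally ramified in this layer, so every prime above $q$ is $\sigma$-fixed.
\item If $q\ne p_i$ (condition (c)), the condition $q^{\mathrm{ord}_{m_q}(q)}\not\equiv1 \pmod{p_i^{b_i+1}}$ says that the decomposition group of $q$ contains a Frobenius-type element whose image generates the top layer. Combined with the relation $x\overline{x}=n$, this should force the $q$-part of $(x)$ to be $\sigma$-invariant. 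Concretely, primes above $q$ do not split in $\mathbb{Q}(\zeta_m)/\mathbb{Q}(\zeta_{m'})$, so every ideal supported above $q$ is $\sigma$-stable.
\end{itemize}
Thus $\sigma(x)=ux$ for a unit $u$ with $|u|=1$ under every embedding, and by fact \ref{conjugate_length1} (with remark \ref{cyclo_absolute_value_1}) $u$ is a root of unity.

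Next I analyse the cocycle. The map $\sigma\mapsto u$ is a $1$-cocycle with values in $W_{\mathbb{Q}(\zeta_m)}$. Applying $\sigma$ $p_i$ times gives $N(u)=1$, where $N=1+\sigma+\dots+\sigma^{p_i-1}$. I would then show, by a direct computation on roots of unity, that $u=\sigma(\zeta)/\zeta$ for some root of unity $\zeta$. This is Hilbert 90 restricted to roots of unity, and it holds because $\sigma$ acts on $\zeta_m$ by multiplication by $\zeta_{p_i}$. Setting $x'=x\zeta^{-1}$ gives $\sigma(x')=x'$, so $x'\in\mathbb{Q}(\zeta_{m'})\cap\mathcal{O}=\mathbb{Z}[\zeta_{m'}]$, as required.

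The main obstacle is the case $q\neq p_i$. There one must translate condition (c), which involves the auxiliary modulus $m_q$ and the special treatment of $p_i=2$ together with the factor $4$ when $m$ is even, into the statement that $q$ is inert or ramified in the top layer. The difficulty is that the decomposition field of $q$ is controlled by $\mathrm{ord}_{m/q^{v_q(m)}}(q)$, not by $\mathrm{ord}_{m_q}(q)$, so a lifting-the-exponent argument like lemma \ref{lemma_order_mod_p^s} is needed to pass from $p_i$ to $p_i^{c_i}$. The residual $H^1$ computation for $p_i=2$ is also delicate; this is exactly why the exception $(p_i,b_i)\neq(2,1)$ appears in (a). For the full details I would rely on \cite{def_self_conj_Schmidt}.
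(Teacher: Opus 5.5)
The paper gives no proof of this lemma; it is quoted from Schmidt. Your outline is the same field-descent strategy Schmidt uses: show that $(x)$ is fixed by the relevant automorphisms via inertia and decomposition groups, deduce that $\sigma(x)/x$ is a root of unity, then strip that root of unity off. As written, though, the two steps that carry the content are only asserted, and one of them is misstated.

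\textbf{Translating (c).} This is shorter than you fear and needs no separate lifting argument. Let $m''$ be the $q$-free part of $m$ and $f=\mathrm{ord}_{m_q}(q)$. The decomposition group of $q$ is $(\mathbb{Z}/q^{v_q(m)}\mathbb{Z})^*\times\langle q\rangle$ inside $(\mathbb{Z}/q^{v_q(m)}\mathbb{Z})^*\times(\mathbb{Z}/m''\mathbb{Z})^*$.
\begin{itemize}
\item For each $p_j\neq q$, the $p_j$-component of $q^f$ lies in the cyclic $p_j$-group $U_j=\{t\equiv 1 \bmod p_j\}$. When $p_j=2$ it lies in $\{t\equiv 1\bmod 4\}$, which is what the factor $4$ in $m_q$ ensures.
\item Raise $q^f$ to $\prod_{p_j\neq p_i,q}|U_j|$, an integer prime to $p_i$. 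This kills every component except the $p_i$-one, without changing the subgroup of $U_i$ it generates.
\item The subgroups of $U_i$ are exactly the $\{t\equiv 1\bmod p_i^k\}$. So that subgroup is $\{t\equiv1\bmod p_i^{s}\}$ with $s=v_{p_i}(q^f-1)\le b_i$ by (c).
\end{itemize}
Hence all of $\mathrm{Gal}(\mathbb{Q}(\zeta_m)/\mathbb{Q}(\zeta_{m/p_i^{c_i-b_i}}))$ lies in the decomposition group of $q$, which is what your second bullet needs.

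\textbf{The cohomology step is misstated.} Write $W=\mu_M$ with $M=\mathrm{lcm}(2,m)$. Then $\sigma(\zeta)=\zeta^{1+sM/p_i}$ with $p_i\nmid s$, and the coboundaries $\sigma(\zeta)/\zeta$ form $\mu_{p_i}$.
\begin{itemize}
\item For odd $p_i$, using $p_i^2\mid M$, one gets $N(\zeta)=\zeta^{p_i}$. Its kernel is $\mu_{p_i}$, so $H^1=1$ there.
\item For $p_i=2$ one gets $N(\zeta)=\zeta^{2+M/2}$. Its kernel is $\mu_2$ if $8\mid M$ but $\mu_4$ if $4\,\|\,M$.
\end{itemize}
So the layer $\mathbb{Q}(\zeta_{4k})/\mathbb{Q}(\zeta_{2k})$, $k$ odd, has nontrivial $H^1$, even though $\sigma$ still multiplies $\zeta_m$ by $\zeta_2$. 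For example, $x=1+\mathrm{i}$ has $x\overline{x}=2$ and $\sigma(x)=-\mathrm{i}x$, and no $x\zeta$ with $\zeta\in W$ lies in $\mathbb{Z}$.

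You therefore have to show this layer is never crossed when $n>1$. For $q=2$ this is the exclusion $(p_i,b_i)\neq(2,1)$ in (a), as you say. For odd $q$ it is instead the factor $4$ in $m_q$: it forces $q^{\mathrm{ord}_{m_q}(q)}\equiv 1\pmod 4$, so (c) fails at $b_i=1$, and hence $b_i\ge 2$ at $p_i=2$. For $n=1$, $x$ is a root of unity and the claim is trivial.

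With these two points supplied, your layer-by-layer induction is a complete proof.
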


Apply the above lemma to our context, where $m=N=3^{a} \cdot 7^{b}$ and $n=2 \cdot 7^{b}$. Then it is easy to verify that if $a=1$, then $F(m,n)=21$ and if $a \geq 2$, then $F(m,n)=63$. For $N=3^{a} \cdot 7^{b}$ and $L=\mathbb{Q}(\zeta_{N})$, by fact \ref{roots_of_unity_in_cyclotomic_fields} we have that $W_L=\langle \zeta_{2N} \rangle$, so we have the following lemma:

\begin{lemma} \label{lemma_find_all_solutions}
Let $N=3^{a} \cdot 7^{b}$ and $L=\mathbb{Q}(\zeta_{N})$; $N_{0}=21$ if $a=1$ and $N_0=63$ if $a \geq 2$. Let $E=\mathbb{Q}(\zeta_{N_0})$. Let $A=\{ y \in \mathcal{O}_L \ | \ y \overline{y}=2 \cdot 7^{b}  \}$ and $A^{\prime}=\{ y \in \mathcal{O}_E \ | \ y \overline{y}=2 \cdot 7^{b}  \}$. Then $A=\{ y \cdot \zeta_{2N}^{j} \ | \ y \in A^{\prime}, 0 \leq j \leq 2N-1 \}$.
\end{lemma}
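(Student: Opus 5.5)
The plan is to prove the two inclusions separately, with Schmidt's descent lemma (Thm 3.5 of \cite{def_self_conj_Schmidt}) doing the real work for the nontrivial one.

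For the easy inclusion $\supseteq$, take $y\in A'$ and $0\le j\le 2N-1$.
\begin{itemize}
\item Since $N_0\mid N$ (we have $21\mid N$ always, and $63\mid N$ when $a\ge 2$), we get $E\subseteq L$ and $\mathcal{O}_E\subseteq\mathcal{O}_L$.
\item Because $N$ is odd, $\zeta_{2N}=-\zeta_N^{(N+1)/2}\in\mathcal{O}_L$, so $y\zeta_{2N}^j\in\mathcal{O}_L$.
\item Since $\zeta_{2N}^j\overline{\zeta_{2N}^j}=1$, we get $(y\zeta_{2N}^j)\overline{(y\zeta_{2N}^j)}=y\overline{y}=2\cdot 7^b$.
\end{itemize}
Hence $y\zeta_{2N}^j\in A$.

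For the inclusion $\subseteq$, let $y\in A$. Then $y\in\mathcal{O}_L=\mathbb{Z}[\zeta_N]$ and $y\overline{y}=2\cdot 7^b$. Apply the descent lemma with $m=N$ and $n=2\cdot 7^b$. It gives some $j$ with $y\zeta_N^{j}\in\mathbb{Z}[\zeta_{F(m,n)}]$, and $F(m,n)=N_0$.

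Set $y'=y\zeta_N^{j}$. Then $y'\in\mathcal{O}_E$ and $y'\overline{y'}=y\overline{y}=2\cdot7^b$, so $y'\in A'$. Also $y=y'\zeta_N^{-j}$. Since $\zeta_N=\zeta_{2N}^2$, we have $\zeta_N^{-j}=\zeta_{2N}^{j'}$ for some $0\le j'\le 2N-1$, which is the required form. (Fact \ref{roots_of_unity_in_cyclotomic_fields}, $W_L=\langle\zeta_{2N}\rangle$, shows that $\pm$ signs cause no trouble, since $-1=\zeta_{2N}^N$.)

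The main obstacle is verifying $F(N,2\cdot7^b)=N_0$, which the paper states only as "easy to verify". I would check it from the definition, with $\mathcal{D}(n)=\{2,7\}$ and $m$ odd, so $m_q=\prod_{p_i\ne q}p_i$. This gives $m_2=21$ and $m_7=3$.

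\emph{The prime $p_i=7$.} For $q=7$, condition (a) holds. For $q=2$, we have $\mathrm{ord}_{21}(2)=6$ and $2^6=64\not\equiv 1\pmod{49}$. So condition (c) already holds with $b_i=1$, and the exponent of $7$ is $1$.

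\emph{The prime $p_i=3$.}
\begin{itemize}
\item For $q=2$: $\mathrm{ord}_{21}(2)=6$ and $2^6=64\equiv 1\pmod 9$, so (c) fails at $b_i=1$. But $64\not\equiv1\pmod{27}$, so (c) holds at $b_i=2$.
\item For $q=7$: $\mathrm{ord}_3(7)=1$ and $7\not\equiv1\pmod 9$, so (c) holds already at $b_i=1$.
\end{itemize}
Hence the exponent of $3$ is $b_i=\min(a,2)$, using (b) when $a=1$. This gives $F=21$ if $a=1$ and $F=63$ if $a\ge2$, i.e.\ $F(m,n)=N_0$ as claimed.
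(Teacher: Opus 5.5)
Your proposal is correct and follows the paper's argument: apply Schmidt's descent theorem with $m=N$ and $n=2\cdot 7^{b}$, use $F(m,n)=N_0$, and absorb the resulting root of unity into a power of $\zeta_{2N}$ via $W_L=\langle \zeta_{2N}\rangle$. Your explicit computation of $F(N,2\cdot 7^{b})$ is also correct, and it fills in the step the paper only calls ``easy to verify''. The ingredients are $m_2=21$ and $m_7=3$, together with $2^6\equiv 1 \pmod 9$, $2^6\not\equiv 1 \pmod{27}$, $2^6\not\equiv 1 \pmod{49}$ and $7\not\equiv 1\pmod 9$.
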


Now we determine the set $A^{\prime}$ (hence $A$) in the above lemma. If $a=1$, then $A^{\prime}$ is the set of all $y \in \mathcal{O}_K$ such that $y \overline{y}=2 \cdot 7^{b}$, where $K=\mathbb{Q}(\zeta_{21})$. Recall that $2 \mathcal{O}_K=(\frac{1+\sqrt{-7}}{2})(\frac{1-\sqrt{-7}}{2})$ and $7 \mathcal{O}_{K}={(\beta)}^{6}  {(\overline{\beta})}^{6}$, where $(\frac{1 \pm \sqrt{-7}}{2})$, $(\beta)$ and $(\overline{\beta})$ are prime ideals of $\mathcal{O}_K$. Hence
$$y=\frac{1 \pm \sqrt{-7}}{2} \cdot {\beta}^{l} \cdot {\overline{\beta}}^{6b-l} \cdot u$$
for some $u \in U_K$, and since $y \overline{y}=2 \cdot 7^{b}$, we have that $u \overline{u}=v^{b} \cdot {\overline{v}}^{b}$, and by lemma \ref{lemma_solution_is_unique_up_to_roots_of_unity} we have that $u=v^{b} \cdot \zeta_{42}^{j}$ for some $0 \leq j \leq 41$. Therefore when $a=1$, we have that
$$A=\Big\{ \frac{1 \pm \sqrt{-7}}{2} \cdot {\beta}^{l} \cdot {\overline{\beta}}^{6b-l} \cdot v^{b} \cdot \zeta_{2N}^{j} \ \Big| \ 0 \leq l \leq 6b, 0 \leq j \leq 2N-1 \Big\}.$$

Next we consider the case of $a \geq 2$. Let $E=\mathbb{Q}(\zeta_{63})$. Let $K=\mathbb{Q}(\zeta_{21})$; $\beta$ and $v$ be as in lemma \ref{solutions_to_eq_42}. Let $\tau$ denote the map of complex conjugation. As proved above, $(\beta)$ and $(\overline{\beta})$ are the two prime ideals lying above $7$ in $\mathcal{O}_K$, and $7 \mathcal{O}_{K}={(\beta)}^{6}  {(\overline{\beta})}^{6}$. By lemma \ref{lemma_order_mod_p^s}, $2$ is a primitive root $\mathrm{mod} \ 9$, so the order of $2 \ \mathrm{mod} \ 63$ is $\mathrm{lcm}(2 \cdot 3,3)=6$, and $2$ splits into $\frac{\varphi(63)}{6}=6$ prime ideals in $\mathcal{O}_{E}$. By lemma \ref{DecompositionGroup} and the characterization of integers $N$ satisfying $2^{k} \equiv -1 \ (\mathrm{mod} \ N)$ for some $k$ (see page 563 of \cite{FKQ_earliest} for instance), we see that $\tau$ is not in the decomposition group of $2$ in $E$, so the six ideals are $\mathfrak{p}_1, \mathfrak{p}_2, \mathfrak{p}_3$ and their conjugates, i.e. we have that
$$2 \mathcal{O}_{E}=\mathfrak{p}_{1} \mathfrak{p}_2 \mathfrak{p}_3 \overline{\mathfrak{p}_1} \overline{\mathfrak{p}_2} \overline{\mathfrak{p}_3}.$$
 
Without loss of generality we assume that
$$\frac{1+\sqrt{-7}}{2} \mathcal{O}_{E}=\mathfrak{p}_1 \mathfrak{p}_2 \mathfrak{p}_3 \ \text{and} \ \frac{1-\sqrt{-7}}{2} \mathcal{O}_{E}=\overline{\mathfrak{p}_1} \overline{\mathfrak{p}_2} \overline{\mathfrak{p}_3}.$$  
 
By lemma \ref{lemma_order_mod_p^s}, we have that the order of $7 \ \mathrm{mod} \ 9$ is $3$. Since $\mathrm{f}\big((\beta) \ | \ 7 \big)=\mathrm{f}\big((\overline{\beta}) \ | \ 7 \big)=1$ and $[E:K]=3$, we see that $\beta \mathcal{O}_{E}$ and $\overline{\beta} \mathcal{O}_{E}$ are prime ideals in $\mathcal{O}_{E}$, and $7 \mathcal{O}_{E}={(\beta)}^{6} {(\overline{\beta})}^{6}$.

We shall investigate solutions to the equation $x \in \mathcal{O}_{E}, x \overline{x}=2$, which will turn out to be useful in the following context. Note that solution of the equation exists, for example $x=\frac{1+\sqrt{-7}}{2}$. Although there are $2$ prime ideals lying above $2$ in $K$ while there are $6$ in $E$, the following lemma shows that this does not substantially affect the equation $x \overline{x}=2$.

\begin{lemma} \label{lemma_solutions_to_xxbar=2_in_O_L}
Let $x \in \mathcal{O}_{E}$ such that $x \overline{x}=2$. Then $x=\frac{1 \pm \sqrt{-7}}{2} \cdot \zeta_{126}^{j}$. 
\end{lemma}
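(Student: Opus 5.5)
Let $x \in \mathcal{O}_E$ with $x\overline{x}=2$. The plan is to pin down the ideal $(x)$ by unique factorization, using the splitting $2\mathcal{O}_E=\mathfrak{p}_1\mathfrak{p}_2\mathfrak{p}_3\overline{\mathfrak{p}_1}\,\overline{\mathfrak{p}_2}\,\overline{\mathfrak{p}_3}$ described above. From $(x)(\overline{x})=2\mathcal{O}_E$, for each $i$ exactly one of $\mathfrak{p}_i,\overline{\mathfrak{p}_i}$ divides $(x)$, and each occurs to exponent at most $1$. So $(x)=\mathfrak{q}_1\mathfrak{q}_2\mathfrak{q}_3$ with $\mathfrak{q}_i\in\{\mathfrak{p}_i,\overline{\mathfrak{p}_i}\}$, giving $8$ a priori possibilities. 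The two "pure" choices are $\big(\frac{1+\sqrt{-7}}{2}\big)$ and $\big(\frac{1-\sqrt{-7}}{2}\big)$.

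The key step is to show the six "mixed" ideals (such as $\mathfrak{p}_1\mathfrak{p}_2\overline{\mathfrak{p}_3}$) are not generated by any $x$ with $x\overline{x}=2$. My approach is to use the norm down to $K=\mathbb{Q}(\zeta_{21})$, since $\mathrm{Gal}(E/K)$ has order $3$ and, by the splitting counts (two primes above $2$ in $K$, six in $E$), permutes $\mathfrak{p}_1,\mathfrak{p}_2,\mathfrak{p}_3$ cyclically. Taking $\mathrm{N}_{E/K}$ of $(x)=\mathfrak{p}_1\mathfrak{p}_2\overline{\mathfrak{p}_3}$ gives, in $K$, the ideal $\mathfrak{P}^2\overline{\mathfrak{P}}$ with $\mathfrak{P}=\big(\frac{1+\sqrt{-7}}{2}\big)$. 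Moreover $y=\mathrm{N}_{E/K}(x)$ satisfies $y\overline{y}=8$.

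This alone is not contradictory, since $y=2\cdot\frac{1+\sqrt{-7}}{2}\cdot$(unit) is consistent. So the norm argument only restricts things, and the real input must be the class group or an explicit computation in $E$. The plan is therefore the following.

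First, decide whether the mixed ideals $\mathfrak{p}_1\mathfrak{p}_2\overline{\mathfrak{p}_3}$ are principal in $E$. I would do this with PARI/GP's \texttt{bnfisprincipal}, as the paper does elsewhere, certifying any generator found by its norm. If they are non-principal, only the two pure ideals remain.

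If some mixed ideal is principal with generator $\gamma$, then $\gamma\overline{\gamma}=2u$ with $u\in U_{E^+}$ totally positive. I would then use Lemma \ref{criteria_for_A2} (and \texttt{nfeltissquare}) to test whether $u=w\overline{w}$ for a unit $w$; the expectation is that it fails, ruling out that ideal. By Galois symmetry under $\mathrm{Gal}(E/K)$ and complex conjugation, it suffices to check one mixed ideal, since these groups act transitively on the six.

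Finally, suppose $(x)=\big(\frac{1\pm\sqrt{-7}}{2}\big)$. Then $x=\frac{1\pm\sqrt{-7}}{2}\cdot t$ with $t\in U_E$ and $t\overline{t}=1$. By Fact \ref{conjugate_length1}, Remark \ref{cyclo_absolute_value_1} and Fact \ref{roots_of_unity_in_cyclotomic_fields}, $t$ is a $126$-th root of unity, which gives the stated form.

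The main obstacle is excluding the mixed ideals. The class group of $\mathbb{Q}(\zeta_{63})$ is nontrivial, so I expect to rely on either a certified principal-ideal computation or the unit-norm criterion of Lemma \ref{criteria_for_A2}, rather than on a purely structural argument.
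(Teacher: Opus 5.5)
Your skeleton matches the paper's. The ideal $(x)$ must be one of the eight products $\mathfrak{q}_1\mathfrak{q}_2\mathfrak{q}_3$ with $\mathfrak{q}_i\in\{\mathfrak{p}_i,\overline{\mathfrak{p}_i}\}$. The group generated by $\mathrm{Gal}(E/K)$ and complex conjugation acts transitively on the six mixed ones. The two pure ones give $x=\frac{1\pm\sqrt{-7}}{2}\zeta_{126}^{j}$ via Fact~\ref{conjugate_length1} and Fact~\ref{roots_of_unity_in_cyclotomic_fields}.

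\textbf{The gap.} You never actually exclude the mixed ideals; you only propose to decide the question with \texttt{bnfisprincipal} in the degree-$36$ field $E=\mathbb{Q}(\zeta_{63})$. In fact the mixed ideals are \emph{not} principal, and that is exactly the outcome your device (checking the norm of a returned generator) cannot certify. A ``not principal'' verdict from \texttt{bnfisprincipal} rests on GRH, and running \texttt{bnfcertify} on a field of degree $36$ is impractical. So as written your argument is at best GRH-conditional, while this lemma feeds the unconditional Theorem~\ref{main_thm_2}. Your fallback branch via Lemma~\ref{criteria_for_A2} never arises. You were right that norming down to $K=\mathbb{Q}(\zeta_{21})$ gives nothing, since $h(K)=1$; the problem is that $K$ is the wrong subfield.

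\textbf{The missing idea.} The paper first reduces structurally, then computes in a small field where certification is feasible.
\begin{enumerate}
\item Since $h(E^+)=1$ and $\mathfrak{p}_i\cap E^+$ splits in $E$ as $\mathfrak{p}_i\overline{\mathfrak{p}_i}$, each $\mathfrak{p}_i\overline{\mathfrak{p}_i}$ is principal, so $[\overline{\mathfrak{p}_i}]=[\mathfrak{p}_i]^{-1}$.
\item Suppose a mixed ideal, say $\mathfrak{p}_1\mathfrak{p}_2\overline{\mathfrak{p}_3}$, were principal. Comparing it with the principal ideal $\mathfrak{p}_1\mathfrak{p}_2\mathfrak{p}_3=\bigl(\frac{1+\sqrt{-7}}{2}\bigr)$ gives $[\overline{\mathfrak{p}_3}]=[\mathfrak{p}_3]$. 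Hence $[\mathfrak{p}_3]^2=1$, and since $h(E)=7$ is odd, $\mathfrak{p}_3$ is principal.
\item Pass to the decomposition field $D$ of $2$, which has degree $6$ and in which $\mathfrak{p}_3\cap\mathcal{O}_D$ is inert in $E$. Then $\mathrm{N}_{E/D}(\mathfrak{p}_3)=(\mathfrak{p}_3\cap\mathcal{O}_D)^6$ would be principal in $D$.
\item A \texttt{bnfcertify}-certified computation gives $\mathrm{Cl}(D)\cong C_{21}$, with every prime above $2$ of order $21$. This contradicts the sixth power being principal.
\end{enumerate}
The field $D$ works where $K$ fails because the primes above $2$ are inert from $D$ to $E$, and $\mathrm{Cl}(D)$ is small enough to certify yet nontrivial enough to detect the obstruction.
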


\begin{proof}
By the table in the appendix of \cite{IntroductionToCyclotomicFields}, we know that $E$ has class number $7$ and $E^{+}$ has class number $1$. Assume that $I$ is a principal ideal such that $I \overline{I}=2 \mathcal{O}_{E}$. If $I$ is not $(\frac{1+\sqrt{-7}}{2})=\mathfrak{p}_{1} \mathfrak{p}_2 \mathfrak{p}_3$ or $(\frac{1-\sqrt{-7}}{2})=\overline{\mathfrak{p}_1} \overline{\mathfrak{p}_2} \overline{\mathfrak{p}_3}$, then $\mathfrak{p}_i$ and $\overline{\mathfrak{p}_i}$ must be in the same class for some $i$. Let $p_i=\mathfrak{p}_i \cap E^{+}$, then since $\mathfrak{p}_i$ is not invariant under the complex conjugate map, which generates $\mathrm{Gal}(E / E^{+})$, we see that the decomposition group $\mathrm{D}(\mathfrak{p}_i / p_i)$ is trivial, hence $p_i \mathcal{O}_E=\mathfrak{p}_i \overline{\mathfrak{p}_i}$. Since $E^{+}$ has class number $1$, $p_i$ is a principal ideal, so $\mathfrak{p}_i \overline{\mathfrak{p}_i}$ is a principal ideal. Then ${\mathfrak{p}_i}^{2}$ is principal, and hence so is $\mathfrak{p}_i$ since $E$ has class number $7$. Let $D$ be the decomposition field of $2$ in $E$, then we know that $[D:\mathbb{Q}]=6$, and $\mathfrak{p}_i \cap \mathcal{O}_D$ is a prime ideal of $\mathcal{O}_D$ lying above $2$. Note that $\mathfrak{p}_i \cap \mathcal{O}_D$ is inert in $E$, so $\mathrm{N}_{E / D}(\mathfrak{p}_i)={(\mathfrak{p}_i \cap \mathcal{O}_D)}^{6}$, so ${(\mathfrak{p}_i \cap \mathcal{O}_D)}^{6}$ is a principal ideal in $\mathrm{Cl}(D)$.

However, with the help of PARI/GP, we will see that this is a contradiction. We use the command \texttt{galoissubcyclo} to get a defining polynomial of $D$, and then use the command \texttt{bnfinit} to compute the information about the class group of $D$, and use the command \texttt{bnfcertify} to make sure that the computation is true without assuming GRH. Note that while the command \texttt{bnfcertify} is quite time-consuming in general, for our field $D$ it runs quite fast. The computation shows that the class group of $D$ is cyclic of order $21$. Then we use the command \texttt{bnfisprincipal} to check the order of the prime ideals of $D$ lying above $2$, and it turns out that any prime lying above $2$ has order $21$ in $\mathrm{Cl}(D)$, a contradiction to ${(\mathfrak{p}_i \cap \mathcal{O}_D)}^{6}$ being principal.

Hence $I=(\frac{1+\sqrt{-7}}{2})$ and $I=(\frac{1-\sqrt{-7}}{2})$ are the only two principal ideals in $\mathcal{O}_{E}$ such that $I \overline{I}=2 \mathcal{O}_{E}$. Hence if $x \in \mathcal{O}_{E}$ is such that $x \overline{x}=2$, then we have that $(x)=(\frac{1 \pm \sqrt{-7}}{2})$, and by fact \ref{conjugate_length1} and fact \ref{roots_of_unity_in_cyclotomic_fields} we see that $x=\frac{1 \pm \sqrt{-7}}{2} \cdot \zeta_{126}^{j}$.
\end{proof}

Now we investigate solutions to $y \in \mathcal{O}_{E}, y \overline{y}=2 \cdot 7^{b}$ as mentioned above. Recall that $7 \mathcal{O}_{E}={(\beta)}^{6} {(\overline{\beta})}^{6}$, with $\beta \mathcal{O}_{E}$ and $\overline{\beta} \mathcal{O}_{E}$ being prime ideals in $\mathcal{O}_{E}$. So it follows that there exists some $0 \leq l \leq 6b$ such that ${\beta}^{l} \cdot {(\overline{\beta})}^{6b-l} \ \big| \ y$, let $z=\frac{y}{{\beta}^{l} \cdot {(\overline{\beta})}^{6b-l}} \in \mathcal{O}_{E}$, we have that $z \overline{z}=2 v^{b} {\overline{v}}^{b}$ and $(z \cdot v^{-b}) \cdot \overline{(z \cdot v^{-b})}=2$. Hence by lemma \ref{lemma_solutions_to_xxbar=2_in_O_L} we have that $z \cdot v^{-b}=\frac{1 \pm \sqrt{-7}}{2} \cdot \zeta_{126}^{j}$, so $y=\frac{1 \pm \sqrt{-7}}{2} \cdot  {\beta}^{l} \cdot {(\overline{\beta})}^{6b-l} \cdot v^{b} \cdot \zeta_{126}^{j}$, i.e. we have the following lemma:

\begin{lemma}
Assume $y \in \mathcal{O}_{E}$ and $y \overline{y}=2 \cdot 7^{b}$. Then
$$y=\frac{1 \pm \sqrt{-7}}{2} \cdot {\beta}^{l} \cdot {(\overline{\beta})}^{6b-l} \cdot v^{b} \cdot \zeta_{126}^{j}$$
for some $l,j$ such that $0 \leq l \leq 6b$, $0 \leq j \leq 125$; $v$ and $\beta$ are as in lemma \ref{solutions_to_eq_42}.
\end{lemma}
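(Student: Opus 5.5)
The plan is to formalize the argument sketched just before the statement. It has three steps: strip off the $7$-part of $(y)$, normalize the remaining unit using $v$, and then apply lemma \ref{lemma_solutions_to_xxbar=2_in_O_L}.

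\emph{Step 1: the $7$-part.} Let $y \in \mathcal{O}_E$ with $y\overline{y}=2\cdot 7^{b}$. Since $7\mathcal{O}_E=(\beta)^6(\overline{\beta})^6$ with $(\beta)$ and $(\overline{\beta})$ distinct primes of $\mathcal{O}_E$, the $7$-part of $(y)$ is $(\beta)^{l}(\overline{\beta})^{k}$ for some $l,k \ge 0$. Applying complex conjugation, the $7$-part of $(\overline{y})$ is $(\overline{\beta})^{l}(\beta)^{k}$. Comparing exponents in $(y)(\overline{y})=2\cdot 7^b\mathcal{O}_E=2\mathcal{O}_E\cdot(\beta)^{6b}(\overline{\beta})^{6b}$ gives $l+k=6b$. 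Here I use that the primes above $2$ and above $7$ are disjoint, so the $2$-part contributes nothing.

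\emph{Step 2: normalize by $v$.} Set $z=y/(\beta^{l}\overline{\beta}^{6b-l}) \in \mathcal{O}_E$. Then
$$z\overline{z}=\frac{2\cdot 7^b}{(\beta\overline{\beta})^{6b}}=2u^b,$$
where $u=7/(\beta\overline{\beta})^6$ is the unit from subsection \ref{subsection_51}. Since $u=v\overline{v}$, we get $z\overline{z}=2\,v^b\overline{v}^b$. The element $w=z v^{-b}$ lies in $\mathcal{O}_E$ because $v$ is a unit of $\mathcal{O}_K\subseteq\mathcal{O}_E$, and it satisfies $w\overline{w}=2$.

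\emph{Step 3: conclude.} By lemma \ref{lemma_solutions_to_xxbar=2_in_O_L}, $w=\frac{1\pm\sqrt{-7}}{2}\zeta_{126}^{j}$. Unwinding the definitions gives
$$y=\frac{1\pm\sqrt{-7}}{2}\,\beta^{l}\,\overline{\beta}^{6b-l}\,v^b\,\zeta_{126}^{j}.$$
Reducing $j$ modulo $126$ puts it in the stated range, using fact \ref{roots_of_unity_in_cyclotomic_fields} for $W_E=\langle\zeta_{126}\rangle$.

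Almost all the content is in the two inputs already established: the splitting $7\mathcal{O}_E=(\beta)^6(\overline\beta)^6$, and lemma \ref{lemma_solutions_to_xxbar=2_in_O_L}, which rests on the certified class group computation for the decomposition field $D$. The only place needing care is the exponent bookkeeping in Step 1. There I must use that conjugation swaps $(\beta)$ and $(\overline\beta)$, which is fine because $\beta\mathcal{O}_E$ is prime and $\overline{(\beta)}=(\overline{\beta})\neq(\beta)$. This guarantees the $7$-adic exponents pair up to exactly $6b$, so $z$ has no $7$-part left.
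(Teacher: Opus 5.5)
Your proposal is correct and follows the paper's argument essentially verbatim. You divide out $\beta^{l}\overline{\beta}^{6b-l}$ using $7\mathcal{O}_E=(\beta)^6(\overline{\beta})^6$, rewrite $z\overline{z}=2v^{b}\overline{v}^{b}$, and apply lemma \ref{lemma_solutions_to_xxbar=2_in_O_L} to $zv^{-b}$; your Step 1 just makes explicit the exponent count $l+k=6b$ that the paper leaves implicit.
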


Therefore, for $N=3^{a} \cdot 7^{b}$ with $a \geq 2$ and $L=\mathbb{Q}(\zeta_N)$, the set $A$ introduced in lemma \ref{lemma_find_all_solutions} can be characterized as follows:
$$A=\Big\{ \frac{1 \pm \sqrt{-7}}{2} \cdot {\beta}^{l} \cdot {\overline{\beta}}^{6b-l} \cdot v^{b} \cdot \zeta_{2N}^{j} \ \Big| \ 0 \leq l \leq 6b, 0 \leq j \leq 2N-1 \Big\}.$$
Note that this coincides with the result of $a=1$, so we summarize in the following lemma that:

\begin{lemma}
Let $N=3^{a} \cdot 7^{b}$, where $a,b \in \mathbb{Z}_{>0}$ and $L=\mathbb{Q}(\zeta_{N})$. Suppose $x \in \mathcal{O}_L$ and $x \overline{x}=2N$. Then
$$x=\frac{1 \pm \sqrt{-7}}{2} \cdot {(\sqrt{-3})}^{a} \cdot \beta^{l} \cdot {(\overline{\beta})}^{6b-l} \cdot v^{b} \cdot \zeta_{2N}^{j},$$
where $\beta$ and $v$ are as in subsection \ref{subsection_51}; $0 \leq l \leq 6b$; $0 \leq j \leq 2N-1$.
\end{lemma}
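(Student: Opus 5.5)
The plan is to reduce to the element $y$ and assemble the two descriptions of the set $A$ already obtained. Let $x \in \mathcal{O}_L$ with $x\overline{x}=2N$. Since $3$ is self-conjugate $\mathrm{mod}\ N$ (indeed $3^{s}\equiv -1 \pmod{7^b}$ for a suitable $s$, as $3$ is a primitive root mod $7$ and, by lemma \ref{lemma_order_mod_p^s}, mod $7^b$), lemma \ref{lemma_self_conj_reduce_one_prime_factor} applies to the prime $3$ with exponent $a$ and $n=1$. It gives $y:=x/(\sqrt{-3})^{a}\in\mathcal{O}_L$ with $y\overline{y}=2\cdot 7^{b}$, so $y\in A$ in the notation of lemma \ref{lemma_find_all_solutions}.

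Next I apply lemma \ref{lemma_find_all_solutions}, which is Schmidt's field descent with $F(N,2\cdot 7^b)=N_0\in\{21,63\}$. It writes $y=y'\zeta_{2N}^{j'}$ with $y'\in\mathcal{O}_E$, $E=\mathbb{Q}(\zeta_{N_0})$, and $y'\overline{y'}=2\cdot 7^b$.

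I then split into cases according to $a$.
\begin{enumerate}
\item[(i)] If $a=1$, then $E=K=\mathbb{Q}(\zeta_{21})$. Here I use the computation just before the summary: $h(K)=1$, the factorizations $2\mathcal{O}_K=(\tfrac{1+\sqrt{-7}}{2})(\tfrac{1-\sqrt{-7}}{2})$ and $7\mathcal{O}_K=(\beta)^6(\overline{\beta})^6$, and lemma \ref{lemma_solution_is_unique_up_to_roots_of_unity} applied to the unit. Together these give $y'=\frac{1\pm\sqrt{-7}}{2}\beta^{l}\overline{\beta}^{6b-l}v^{b}\zeta_{42}^{j''}$.
\item[(ii)] If $a\ge 2$, then $E=\mathbb{Q}(\zeta_{63})$. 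I use the preceding lemma, which relies on lemma \ref{lemma_solutions_to_xxbar=2_in_O_L} and on $(\beta),(\overline{\beta})$ remaining prime in $\mathcal{O}_E$. It gives the same shape with $\zeta_{126}^{j''}$.
\end{enumerate}

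In both cases the roots of unity $\zeta_{42}^{j''}$ and $\zeta_{126}^{j''}$ are $2N$-th roots of unity, since $42\mid 2N$ always and $126\mid 2N$ when $a\ge2$. Hence they combine with $\zeta_{2N}^{j'}$ into a single $\zeta_{2N}^{j}$, with $j$ reduced into $[0,2N-1]$. Multiplying back by $(\sqrt{-3})^{a}$ yields the stated form of $x$.

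The only point needing care is the divisibility step in case (ii). There $\mathcal{O}_E$ need not be a PID, since $h(E)=7$. I will argue with the ideal factorization of $(y')$ directly: $(y')(\overline{y'})=2\cdot 7^b\mathcal{O}_E$, and the primes above $7$ are the principal ideals $(\beta),(\overline{\beta})$. Thus the $7$-part of $(y')$ is $(\beta)^{l}(\overline{\beta})^{6b-l}$ for some $l$, the exponents summing to $6b$ by comparing with the conjugate. Dividing by this element leaves $z$ with $z\overline{z}=2(v\overline{v})^{b}$, and then lemma \ref{lemma_solutions_to_xxbar=2_in_O_L} applies to $zv^{-b}$. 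Everything else is bookkeeping with the earlier lemmas.
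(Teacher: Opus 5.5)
Your proposal is correct and follows the paper's own route: strip off $(\sqrt{-3})^{a}$ via lemma \ref{lemma_self_conj_reduce_one_prime_factor}, and descend to $\mathbb{Q}(\zeta_{21})$ or $\mathbb{Q}(\zeta_{63})$ by Schmidt's field descent (lemma \ref{lemma_find_all_solutions}). Then use the $h(K)=1$ computation when $a=1$, or lemma \ref{lemma_solutions_to_xxbar=2_in_O_L} when $a\ge 2$, and absorb the $42$nd or $126$th roots of unity into $\zeta_{2N}^{j}$. Your explicit argument for dividing by $\beta^{l}\overline{\beta}^{6b-l}$ in the non-principal ring $\mathcal{O}_E$ (the primes above $7$ are principal) only spells out a step the paper states without comment.
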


Now we have obtained all solutions to $x \in \mathcal{O}_L$, $x \overline{x}=2N$ for $L=\mathbb{Q}(\zeta_{N})$. Assume that $f$ is a GBF of type $[1,2N]$, where $N=3^{a} \cdot 7^{b}$ and let $e=N$, $q=2N$. As in last subsection, we know that for each $\lambda \in \mathbb{Z}_{q}$, $\alpha_{f}(\lambda):=\sum_{x \in \mathbb{Z}_{q}} \zeta_{q}^{f(x)-\lambda x}$ must be such a solution, and we need to investigate the possible values of $\frac{\alpha_{f}(\lambda+e)}{\alpha_{f}(\lambda)}$. Note that $\alpha_{f}(\lambda+e)+\alpha_{f}(\lambda)$ is a multiple of $2$, so by some arguments analogous to lemma \ref{lemma_step_1_of_sets_determination} and \ref{lemma_step_2_of_sets_determination}, we may reduce the problem of deciding the possible values of $\frac{\alpha_{f}(\lambda+e)}{\alpha_{f}(\lambda)}$ to deciding the pairs $(l,j)$ such that $2 \ \big| \ \big({\beta}^{l}+\overline{\beta}^{l} \zeta_{2N}^{j} \big)$. The following lemma shows that, in essential, there are no new possible values of $\frac{\alpha_{f}(\lambda+e)}{\alpha_{f}(\lambda)}$ (or pairs $(l,j)$) compared to the case of $K=\mathbb{Q}(\zeta_{21})$ in last subsection.

\begin{lemma} \label{lemma_setdetermination_for_7b}
All possible values of $\frac{\alpha_{f}(\lambda+e)}{\alpha_{f}(\lambda)}$ are given in the following set:
$$S=(\cup_{i=1}^{2b} \{ x_{1}^{i},x_{1}^{-i},-x_{1}^{i},-x_{1}^{-i} \}) \cup \{ 1,-1 \},$$
where $x_1=\beta^{-3} \cdot (\overline{\beta})^{3} \cdot \zeta_{42}^{5}$ is as in last subsection.
\end{lemma}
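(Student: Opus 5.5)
Using the description of all solutions of $x\overline{x}=2N$ in $\mathcal{O}_L$ from the previous lemma, I will follow the proofs of lemmas \ref{lemma_step_1_of_sets_determination} and \ref{lemma_step_2_of_sets_determination}. By translating $f$ (replacing $f$ by $f(x)-\lambda x - j$), it suffices to treat $\lambda=0$ and $\alpha_f(0)=\frac{1\pm\sqrt{-7}}{2}(\sqrt{-3})^a\beta^i\overline{\beta}^{6b-i}v^b$.

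The step $2\mid(\alpha_f(0)+\alpha_f(e))$ first shows that $\alpha_f(0)$ and $\alpha_f(e)$ carry the same factor $\frac{1\pm\sqrt{-7}}{2}$. Indeed, that factor generates $\mathfrak{p}_1\mathfrak{p}_2\mathfrak{p}_3$ (resp. its conjugate) in $E$, and its extension to $L$ is still the product of primes above $2$ on one "side". Hence the ratio is $\beta^{k-i}\overline{\beta}^{i-k}\zeta_{2N}^j$. If $k=i$, lemma \ref{lemma_(1+rootofunity)_divides_prime_above_2} gives ratio $\pm1$. Otherwise set $l=|i-k|\in\{1,\dots,6b\}$. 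The conjugation argument of lemma \ref{lemma_step_1_of_sets_determination} then reduces the question to determining all pairs $(l,j)$ with $2\mid(\beta^l+\overline{\beta}^l\zeta_{2N}^j)$ in $\mathcal{O}_L$, closing the resulting set under inversion and negation.

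Next, the root of unity must be pushed down to $K=\mathbb{Q}(\zeta_{21})$. Write $w=\zeta_{2N}^j$. Since $2\mid\beta^l+\overline{\beta}^l w$ and $\beta,\overline{\beta}$ are prime to $2$, we get $w\equiv-\beta^l\overline{\beta}^{-l}\pmod{2\mathcal{O}_L}$. Apply any $\sigma\in\mathrm{Gal}(L/K)$ and subtract: $w-\sigma(w)\equiv0\pmod 2$, so $1-\sigma(w)/w$ is divisible by $2$. But $\sigma(w)/w$ is a root of unity of order dividing $2N$, and $1-\zeta$ is divisible by $2$ only if $\zeta=1$. This holds because for a root of unity $\zeta\ne1$ of odd order times $2$, the norm of $1-\zeta$ is a power of an odd prime or equals $1$, or, when $\zeta=-\zeta'$, the element is $1+\zeta'$, which lemma \ref{lemma_(1+rootofunity)_divides_prime_above_2} handles. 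Hence $\sigma(w)=w$ for all $\sigma$, so $w\in K$, i.e.\ $w=\zeta_{42}^{j'}$. Divisibility by $2$ of an element of $\mathcal{O}_K$ is the same in $\mathcal{O}_L$ as in $\mathcal{O}_K$, since $\mathcal{O}_L\cap K=\mathcal{O}_K$. So we are reduced to the problem in $K$, now with $l$ ranging over $1,\dots,6b$.

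The main obstacle is that $l$ can now exceed $6$. To handle this, I use multiplicativity modulo $2$: the condition says $\rho^l\equiv -\zeta_{42}^{-j}\pmod{2\mathcal{O}_K}$ with $\rho=\beta/\overline{\beta}\in(\mathcal{O}_K/2)^*$. Let $H=\langle\zeta_{42}\rangle$ mod $2$, and let $d$ be the order of $\rho$ in $(\mathcal{O}_K/2)^*/H$. The computation in lemma \ref{lemma_step_1_of_sets_determination} shows that among $1\le l\le6$ only $l=3,6$ occur, so $d=3$ (using $6\le$ a full check). Moreover, $\rho^3\equiv-\zeta_{42}^{-5}$, and that congruence determines $j$ modulo $21$ uniquely for each $l$; uniqueness follows from injectivity of $\zeta_{21}\mapsto$ mod $\mathfrak{p}$, which is a consequence of lemma \ref{lemma_(1+rootofunity)_divides_prime_above_2}. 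Thus for $l=3t$ the valid pairs are exactly $j\equiv5t\pmod{21}$ up to the sign choice $j\mapsto j+21$. This gives the ratios $\pm x_1^{t}$ for $1\le t\le 2b$, and with inverses we obtain $S$.
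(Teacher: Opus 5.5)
Your proof is correct, but its key step differs from the paper's. The paper stays in $\mathbb{Q}(\zeta_N)$ and manipulates divisibilities directly:
\begin{itemize}
\item it shows $2\mid\beta^{3i}+\overline{\beta}^{3i}\zeta_{42}^{5i}$ by induction;
\item it reduces $3\nmid l$ to $l=1$, handling $l=2$ by squaring and using that $2$ is unramified;
\item it then cubes to force $3j\equiv 5\cdot 3^{a-1}7^{b-1}\pmod{N}$.
\end{itemize}
For $a=1$ this is immediately contradictory. For $a\ge 2$ it leaves the candidates $\pm\zeta_{126}^{t}$, $t\in\{5,47,89\}$, which are excluded only by an explicit PARI/GP check in $\mathbb{Q}(\zeta_{63})$ (Table \ref{tab:lemma17-pari-check}).

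You instead do two things. First, you apply $\sigma\in\mathrm{Gal}(L/K)$ to the congruence, which shows that any admissible root of unity already lies in $K=\mathbb{Q}(\zeta_{21})$. Second, you encode the dependence on $l$ as the order $d$ of $\rho=\beta\overline{\beta}^{-1}$ in $(\mathcal{O}_K/2)^{*}/\bar H$. Then the two facts ``no $j$ works for $l=1$'' and ``$(3,5)$ works'', both from the computation in lemma \ref{lemma_step_1_of_sets_determination}, settle every $l\le 6b$ at once.

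Your route is uniform in $a$ and replaces the induction and the $l=1,2$ reductions by a one-line subgroup argument. It also makes the $\mathbb{Q}(\zeta_{63})$ verification superfluous, and explains its outcome: $\beta+\overline{\beta}\zeta_{126}^{t}$ cannot be divisible by $2$ because $\zeta_{126}^{t}\notin K$. The paper's argument avoids Galois descent and stays with explicit divisibility manipulations, at the cost of a case split on $a$ and an extra machine check.

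Two small repairs are needed.
\begin{itemize}
\item Your claim that $2\mid 1-\zeta$ forces $\zeta=1$ is false for $\zeta=-1$, where $1-\zeta=2$. The lemma \ref{lemma_(1+rootofunity)_divides_prime_above_2} you invoke for $\zeta=-\zeta'$ returns exactly this exception. The step survives because $\sigma(w)/w$ always has odd order. Writing $w=\pm\zeta_N^{k}$ and $\sigma(\zeta_N)=\zeta_N^{c}$ gives $\sigma(w)/w=\zeta_N^{k(c-1)}$. For $\zeta\neq 1$ of odd order, $N_{\mathbb{Q}(\zeta)/\mathbb{Q}}(1-\zeta)$ is odd, so $2\nmid 1-\zeta$.
\item The condition $2\mid\beta^{l}+\overline{\beta}^{l}\zeta_{42}^{j}$ means $\rho^{l}\equiv-\zeta_{42}^{j}$, not $-\zeta_{42}^{-j}$, so $\rho^{3}\equiv-\zeta_{42}^{5}$. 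Since you inverted the exponent consistently throughout, the conclusion $j\equiv 5t\pmod{21}$ is unaffected.
\end{itemize}
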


\begin{proof}
Let $(l,j)$ be such that $2 \ \big| \ \big({\beta}^{l}+\overline{\beta}^{l} \zeta_{2N}^{j} \big)$ holds, where $1 \leq l \leq 6b$ and $0 \leq j \leq 2N-1$. First we note that similar to lemma \ref{lemma_step_1_of_sets_determination}, for each $l$, there are at most two $j$ such that $(l,j)$ satisfies $2 \ \big| \ \big({\beta}^{l}+\overline{\beta}^{l} \zeta_{2N}^{j} \big)$, and the two corresponding $j_1$ and $j_2$ must satisfy
$$j_1 \equiv j_2+N \pmod{2N}.$$

First we consider the case of $3 \ | \ l$. Let $l=3i$. We prove that $2 \ \big| \ {\beta}^{3i}+{\overline{\beta}}^{3i} \zeta_{42}^{5i}$ by induction. Since $2 \ \big| \ {\beta}^{3}+{\overline{\beta}}^{3} \zeta_{42}^{5}$ and $2 \ \big| \ {\beta}^{6}+{\overline{\beta}}^{6} \zeta_{42}^{10}$, we may assume that $i \geq 3$. If $i=2t$, then by induction hypothesis we have that $2 \ \big| \ {\beta}^{3t}+{\overline{\beta}}^{3t} \zeta_{42}^{5t}$, and squaring yields that $2 \ \big| \ {\beta}^{6t}+{\overline{\beta}}^{6t} \zeta_{42}^{10t}$, so the conclusion holds for $i$. If $i=2t+1$, then by induction hypothesis we have that $2 \ \big| \ {\beta}^{3t}+{\overline{\beta}}^{3t} \zeta_{42}^{5t}$ and $2 \ \big| \ {\beta}^{3(t+1)}+{\overline{\beta}}^{3(t+1)} \zeta_{42}^{5(t+1)}$, multiplying these together yields that
$$2 \ \big| \ {\beta}^{6t+3}+{\overline{\beta}}^{6t+3} \zeta_{42}^{10t+5}+{\beta}^{3t} \cdot {\overline{\beta}}^{3t} \cdot \zeta_{42}^{5t} \cdot ({\beta}^{3}+{\overline{\beta}}^{3} \zeta_{42}^{5}),$$
so $2 \ \big| \ {\beta}^{6t+3}+{\overline{\beta}}^{6t+3} \zeta_{42}^{10t+5}$, the conclusion holds for $i$ again and this finishes the proof.

Now we consider the case of $3 \nmid l$. Assume that $2 \ \big| \ \big({\beta}^{l}+\overline{\beta}^{l} \zeta_{2N}^{j} \big)$, let $l=3m+\delta$ where $\delta=1 \ \text{or} \ 2$. Since there exists $j^{\prime}$ such that $2 \ \big| \ \big({\beta}^{3m}+\overline{\beta}^{3m} \zeta_{2N}^{j^{\prime}} \big)$, we have that $2 \ \big| \ \big({\overline{\beta}}^{l} \cdot \zeta_{2N}^{j} + \beta^{\delta} \cdot \overline{\beta}^{3m} \zeta_{2N}^{j^{\prime}} \big)$, so there exists $j_{0}$ such that $2 \ \big| \ \big({\beta}^{\delta}+{\overline{\beta}}^{\delta} \cdot \zeta_{2N}^{j_0} \big)$, indicating that such cases may be reduced to $l=1 \ \text{or} \ 2$.

If $l=2$, then up to changing $j$ to $j+N$ one may assume that $j$ is even. Let $j=2 j^{\prime}$. Then $2 \ \big| \ \big({\beta}^{2}+2 \cdot \beta \cdot \overline{\beta} \cdot \zeta_{2N}^{j^{\prime}}+{\overline{\beta}}^{2} \zeta_{2N}^{2 j^{\prime}}\big)={\big(\beta+\overline{\beta} \cdot \zeta_{2N}^{j^{\prime}}  \big)}^{2}$, and since $2$ is unramified it follows that $2 \ \big| \ \beta+{\overline{\beta}} \cdot \zeta_{2N}^{j^{\prime}}$. Thus we only need to consider $l=1$.

If $l=1$ and $(l,j)$ is such that $2 \ \big| \ \big({\beta}^{l}+\overline{\beta}^{l} \zeta_{2N}^{j} \big)$, then $2 \ \big| \ \beta+{\overline{\beta}} \cdot \zeta_{2N}^{j}$ and $2 \ \big| \ {(\beta+{\overline{\beta}} \cdot \zeta_{2N}^{j})}^3$, so $2 \ \big| \ {\beta}^{3}+{\overline{\beta}}^{3} \cdot \zeta_{2N}^{3j}+ \beta \cdot \overline{\beta} \cdot \zeta_{2N}^{j} \cdot \big( \beta+\overline{\beta} \cdot \zeta_{2N}^{j} \big)$, hence $2 \ \big| \ {\beta}^{3}+{\overline{\beta}}^{3} \cdot \zeta_{2N}^{3j}$. But $2 \ \big| \ {\beta}^{3}+{\overline{\beta}}^{3} \zeta_{42}^{5}$, so by the uniqueness of $j$, we have that
$$3j \equiv 5 \cdot 3^{a-1} \cdot 7^{b-1} \ \text{or} \ 5 \cdot 3^{a-1} \cdot 7^{b-1}+N \pmod{2N}.$$
If $a=1$, then $3 \ | \ 5 \cdot  7^{b-1}$, a contradiction! If $a \geq 2$, then
$$j \equiv 5 \cdot 3^{a-2} \cdot 7^{b-1} \ \text{or} \ 26 \cdot 3^{a-2} \cdot 7^{b-1} \pmod{2 \cdot 3^{a-1} \cdot 7^{b}},$$
so $j \equiv t \cdot 3^{a-2} \cdot 7^{b-1} \pmod{2N}$, where $t=5,47,89,26,68 \ \text{or} \ 110$. However, none of $\beta+\overline{\beta} \cdot \zeta_{126}^{t}$ is a multiple of $2$ by a direct check in PARI/GP (which can even be done by hand, and note that it suffices to check for $5,47 \ \text{and} \ 89$). See the appendix for the explicit $\mathrm{mod} \ 2$ coordinates of $\beta+\overline{\beta} \cdot \zeta_{126}^{t}$ expressed as $\mathbb{Z}$-linear combination of an integral basis of $\mathbb{Q}(\zeta_{63})$. Combine the two cases together, we have that 
$$2 \ \big| \ \big({\beta}^{l}+\overline{\beta}^{l} \zeta_{2N}^{j} \big) \ \text{iff} \ l=3i \ \text{and} \ \zeta_{2N}^{j}=\pm \zeta_{42}^{5i}.$$
This, combined with an analysis similar to lemma \ref{lemma_step_1_of_sets_determination} (in a word, all possible values of $\frac{\alpha_{f}(\lambda+e)}{\alpha_{f}(\lambda)}$ must come from some $(l,j)$), proves the assertion of the lemma.
\end{proof}

We need the following lemma, whose proof is quite similar to some part of lemma \ref{lemma_setdetermination_for_7b} to derive a contradiction:

\begin{lemma}
Let $K=\mathbb{Q}(\zeta_{21})$ and $x_1=\beta^{-3} \cdot (\overline{\beta})^{3} \cdot \zeta_{42}^{5}$ be as above. Then for all $i \in \mathbb{Z}_{>0}$, there exists $c \in \mathbb{Z}_{>0}$ (possibly depending on $i$) such that $7^{c} \cdot (x_{1}^{i}+x_{1}^{-i}) \in \mathcal{O}_K$ and $7^{c} \cdot (x_{1}^{i}+x_{1}^{-i}) \equiv 2 \pmod 4$ in $\mathcal{O}_K$. Moreover, it is straightforward to verify that for each $n$ such that $n \in \mathbb{Z}$ and $n>c$, $7^{n} \cdot (x_{1}^{i}+x_{1}^{-i})$ is also congruent to $2$ modulo $4$ in $\mathcal{O}_K$.
\end{lemma}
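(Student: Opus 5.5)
Write $y_i=x_1^i+x_1^{-i}$. Since $x_1=\beta^{-3}\overline{\beta}^{3}\zeta_{42}^{5}$, we have $x_1^{-1}=\overline{x_1}$ and
$$y_i=\frac{\overline{\beta}^{6i}\zeta_{42}^{5i}+\beta^{6i}\zeta_{42}^{-5i}}{(\beta\overline{\beta})^{3i}}.$$
Recall $7\mathcal{O}_K=(\beta\overline{\beta})^{6}$ and $(\beta\overline{\beta})^{6}=7/u$ with $u=v\overline{v}$ a unit. Hence $(\beta\overline{\beta})^{-3i}$ times a suitable power of $7$ is integral. Concretely, $7^{c}(\beta\overline{\beta})^{-3i}=7^{c}(u/7)^{i/2}$, which I will handle by squaring when $i$ is odd: $(\beta\overline{\beta})^{-3i}=(\beta\overline{\beta})^{3i}\cdot(\beta\overline{\beta})^{-6i}=(\beta\overline{\beta})^{3i}u^{i}7^{-i}$. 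So taking $c=i$,
$$7^{i}y_i=u^{i}(\beta\overline{\beta})^{3i}\cdot\frac{\overline{\beta}^{6i}\zeta_{42}^{5i}+\beta^{6i}\zeta_{42}^{-5i}}{(\beta\overline{\beta})^{3i}}\cdot\frac{(\beta\overline{\beta})^{3i}}{(\beta\overline{\beta})^{3i}}\cdots$$
More cleanly, $7^{i}y_i=u^{i}\bigl(\beta^{3i}\overline{\beta}^{9i}\zeta_{42}^{5i}+\beta^{9i}\overline{\beta}^{3i}\zeta_{42}^{-5i}\bigr)\in\mathcal{O}_K$. This gives integrality, and any larger exponent $n>c$ just multiplies by $7^{n-c}\equiv 7^{n-c}\pmod 4$. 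Since $7\equiv -1\pmod 4$, the ``moreover'' part needs the residue $2$ to satisfy $-2\equiv 2\pmod 4$, which is immediate. So the whole task is the congruence $7^{c}y_i\equiv 2\pmod 4$ for one suitable $c$.

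For the congruence, the natural route is the $2$-adic information already established. In the proof of lemma \ref{lemma_setdetermination_for_7b} we showed $2\mid \beta^{3i}+\overline{\beta}^{3i}\zeta_{42}^{5i}$. Write $\beta^{3i}+\overline{\beta}^{3i}\zeta_{42}^{5i}=2\gamma_i$ with $\gamma_i\in\mathcal{O}_K$. Set $P=\beta^{3i}$ and $Q=\overline{\beta}^{3i}\zeta_{42}^{5i}$, so $P+Q=2\gamma_i$ and $x_1^{i}=Q/P$. Then
$$P\overline{P}\,y_i=P\overline{Q}+\overline{P}Q.$$
From $(P+Q)\overline{(P+Q)}=4\gamma_i\overline{\gamma_i}$ we get $P\overline{Q}+\overline{P}Q=4\gamma_i\overline{\gamma_i}-P\overline{P}-Q\overline{Q}=4\gamma_i\overline{\gamma_i}-2(\beta\overline{\beta})^{3i}$, since $P\overline{P}=Q\overline{Q}=(\beta\overline{\beta})^{3i}$. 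Hence
$$y_i=\frac{4\gamma_i\overline{\gamma_i}}{(\beta\overline{\beta})^{3i}}-2.$$
Multiplying by $7^{c}$ for $c$ large enough (e.g. $c=i$, using $(\beta\overline{\beta})^{-3i}\cdot 7^{i}=(\beta\overline{\beta})^{3i}u^{i}\in\mathcal{O}_K$) gives
$$7^{c}y_i=4\gamma_i\overline{\gamma_i}(\beta\overline{\beta})^{3i}u^{i}7^{c-i}-2\cdot 7^{c}\equiv -2\cdot 7^{c}\pmod 4.$$
Since $7^{c}$ is odd, $-2\cdot 7^{c}\equiv 2\pmod 4$ in $\mathcal{O}_K$. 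This gives the claim, with $c=i$ or any larger integer.

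I expect the main obstacle to be only bookkeeping: confirming that $(\beta\overline{\beta})^{6}=7u^{-1}$ with $u$ a unit, so the power of $7$ actually clears the denominator inside $\mathcal{O}_K$, and using the divisibility $2\mid\beta^{3i}+\overline{\beta}^{3i}\zeta_{42}^{5i}$ for every $i$. That divisibility is exactly the induction already carried out in lemma \ref{lemma_setdetermination_for_7b}, so I will cite it rather than redo it.
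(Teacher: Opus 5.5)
Your proof is correct, and it takes a different route from the paper.

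\textbf{The paper's route.} It argues by induction on $i$. The base cases $i=1,2$ are read off from the PARI/GP expressions $x_1+x_1^{-1}=\frac{1}{7}\bigl(4(\cdots)-2\bigr)$ and $x_2+x_2^{-1}=-\frac{2}{7}$. The inductive step uses $(x_1^t+x_1^{-t})(x_1^{t+1}+x_1^{-t-1})=x_1^{2t+1}+x_1^{-2t-1}+x_1+x_1^{-1}$ for odd indices and squaring for even ones, with the power of $7$ tracked along the way ($c_1+c_2+1$, respectively $2c$).

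\textbf{Your route.} With $y_i=x_1^i+x_1^{-i}$, you prove the closed identity $y_i+2=(1+x_1^i)\overline{(1+x_1^i)}=4\gamma_i\overline{\gamma_i}/(\beta\overline{\beta})^{3i}$. This turns the divisibility $2\mid\beta^{3i}+\overline{\beta}^{3i}\zeta_{42}^{5i}$, already established in the proof of Lemma~\ref{lemma_setdetermination_for_7b}, directly into $7^c y_i\equiv -2\cdot 7^c\equiv 2\pmod 4$.

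\textbf{What each approach buys.} Your argument explains the congruence rather than propagating it: the same $2$-divisibility that makes $x_1^i$ an admissible ratio forces $y_i\equiv -2$ up to the $7$-denominator. It gives an explicit admissible $c$ uniformly in $i$. Your argument in fact works for every positive integer $c\ge i/2$, since $(\beta\overline{\beta})^6$ and $7$ differ by a unit. The ``moreover'' clause then comes for free. It also needs no numerical input beyond $2\mid\beta^3+\overline{\beta}^3\zeta_{42}^5$, because $\beta^3\equiv\overline{\beta}^3\zeta_{42}^5\pmod 2$ already gives the $3i$-th power version. The paper's induction stays at the level of the numbers $x_1^i+x_1^{-i}$ themselves and runs parallel to the divisibility induction, but it depends on the explicit base-case computations.

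\textbf{Cleanup.} Two cosmetic fixes are needed. Delete the unfinished first display with the trailing dots. Replace the vacuous ``$7^{n-c}\equiv 7^{n-c}\pmod 4$'' either by $7^n y_i=7^{n-c}(2+4\delta)\equiv 2\cdot 7^{n-c}\equiv 2\pmod 4$, or simply by noting that your argument already covers every $c\ge i$.
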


\begin{proof}
By our computation in subsection \ref{subsection_51}, this is already true for $i=1$ and $2$. Now we prove the lemma by induction. Assume that the lemma holds for all $i<k$, we prove that the lemma holds for $i=k$. If $k=2t+1$, then by induction hypothesis we have that $(x_{1}^{t}+x_{1}^{-t}) \cdot 7^{c_1} \equiv 2 \pmod 4$ and $(x_{1}^{t+1}+x_{1}^{-t-1}) \cdot 7^{c_2} \equiv 2 \pmod 4$ for some $c_1, c_2 \in \mathbb{Z}_{>0}$, so $$(x_{1}^{t}+x_{1}^{-t}) \cdot (x_{1}^{t+1}+x_{1}^{-t-1}) \cdot 7^{c_1+c_2}=(x_{1}^{k}+x_{1}^{-k}+x_1+x_{1}^{-1}) \cdot 7^{c_1+c_2} \equiv 4 \pmod{4}.$$
Hence $(x_{1}^{k}+x_{1}^{-k}) \cdot 7^{c_1+c_2+1} \equiv 2 \pmod{4}$ since we have established that $7(x_1+x_{1}^{-1}) \equiv 2 \pmod{4}$ in subsection \ref{subsection_51}. If $k=2t$, then there exists $c$ such that $(x_{1}^{t}+x_{1}^{-t}) \cdot 7^{c} \equiv 2 \pmod 4$, and squaring this yields the desired result, and this completes the proof.
\end{proof}

Now we are in a position to prove the nonexistence result. Define the sets $A_i$ as in subsection \ref{subsection_51}, consider their symmetry and define the $b_i$ as before, where $b_1$ and $b_2$ still corresponds to the first two sets $A_1$ and $A_2$ consisting of $\lambda$ satisfying $\alpha_{f}(\lambda+e)= \pm \alpha_{f}(\lambda)$. Then similar to subsection \ref{subsection_51} we have $\sum b_i=N$, and by lemma \ref{lemma_dot_product_of_Fouriercoefficients} we have the following relation:

\begin{equation} \label{contradiction_equation}
4N \cdot (b_1-b_2)+\sum_{i=1}^{2b} 2N \cdot (x_{1}^{i}+x_{1}^{-i}) \cdot (b_{2i+1}-b_{2i+2})=0.
\end{equation}

We may express $x_{1}^{i}+x_{1}^{-i}$ as the $\mathbb{Q}$-linear combination of $\zeta_{21}^{j}$, where $0 \leq j \leq 11$. By the above lemma, we may choose a large enough positive integer $C$ such that $7^{C} \cdot (x_{1}^{i}+x_{1}^{-i}) \equiv 2 \pmod{4}$ for each $i$ and multiply $7^{C}$ to the above equation. Then by considering the constant term, we have that 
$$4L \cdot (b_1-b_2)+\sum_{i=1}^{2b} 2 \cdot l_i \cdot (4n_i+2) \cdot (b_{2i+1}-b_{2i+2})=0,$$
where $L$ and $l_i$ are odd, $n_i \in \mathbb{Z}$. Eliminate the factor $4$, we have that
$$L \cdot (b_1-b_2)+\sum_{i=1}^{2b} l_i \cdot (2n_i+1) \cdot (b_{2i+1}-b_{2i+2})=0.$$

Reducing $\mathrm{mod} \ 2$ yields $\sum b_i \equiv 0 \pmod 2$, a contradiction to $\sum b_i=N$, and this proves the following theorem:

\begin{theorem} \label{main_thm_2}
Let $N=3^{a} \cdot 7^{b}$, where $a,b \in \mathbb{Z}_{>0}$, then generalized bent function of type $[1,2N]$ does not exist.
\end{theorem}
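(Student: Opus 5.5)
Argue by contradiction: assume $f$ is a GBF of type $[1,2N]$ with $N=3^a\cdot 7^b$, set $e=N$, and partition $\mathbb{Z}_{2N}$ according to the ratio $\alpha_f(\lambda+e)/\alpha_f(\lambda)$. The steps are already in place above, so the proof mostly assembles them.

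First, by the lemma describing all solutions of $x\overline{x}=2N$ in $\mathcal{O}_L$, every $\alpha_f(\lambda)$ has the form $\frac{1\pm\sqrt{-7}}{2}(\sqrt{-3})^a\beta^l\overline{\beta}^{6b-l}v^b\zeta_{2N}^j$. Since $2\mid \alpha_f(\lambda)+\alpha_f(\lambda+e)$, lemma \ref{lemma_setdetermination_for_7b} applies. To extend it from $\lambda=0$ to arbitrary $\lambda$, replace $f$ by $f(x)-\lambda x$ as in lemma \ref{lemma_step_2_of_sets_determination}. This shows that the ratio lies in $S=\{\pm1\}\cup\bigcup_{i=1}^{2b}\{\pm x_1^{\pm i}\}$. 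Define $A_1,A_2$ as the sets where the ratio is $\pm1$, and, for each $i$ and each sign, one set for $x_1^i$ (respectively $-x_1^i$) and one for its inverse. The elements of $S$ are pairwise distinct, because distinct $i$ give distinct ideals $(\beta^{-3i}\overline{\beta}^{3i})$, so these sets partition $\mathbb{Z}_{2N}$.

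Next comes the symmetry argument. The map $\lambda\mapsto\lambda+e$ is an involution. If the ratio at $\lambda$ is $s$, the ratio at $\lambda+e$ is $s^{-1}$. So $A_1$ and $A_2$ are stable under $\lambda\mapsto\lambda+e$ and have even size $2b_1,2b_2$, because the involution is fixed-point free. The set for $s$ and the set for $s^{-1}$ are swapped, so they have equal size, which we call $b_{2i+1}$ for $x_1^i$ and $b_{2i+2}$ for $-x_1^i$. Counting elements gives $\sum b_k=N$, which is odd.

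Then apply lemma \ref{lemma_dot_product_of_Fouriercoefficients}, using $|\alpha_f|^2=2N$ and $\overline{x_1}=x_1^{-1}$. This gives equation (\ref{contradiction_equation}). Choose $C$ large enough that, by the preceding lemma, $7^C(x_1^i+x_1^{-i})\equiv 2\pmod 4$ in $\mathcal{O}_K$ for every $i\le 2b$, and multiply through by $7^C$. Divide by $2N\cdot 2$, noting that $N$ is odd. Reduce modulo $2$ in $\mathcal{O}_K$, or equivalently compare coordinates in the integral basis $1,\zeta_{21},\dots$ and take the coefficient of $1$. Each term $(b_{2i+1}-b_{2i+2})\cdot\frac{7^C(x_1^i+x_1^{-i})}{2}$ is then congruent to $b_{2i+1}-b_{2i+2}$ modulo $2$, and $7^C(b_1-b_2)$ is congruent to $b_1-b_2$. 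Hence $\sum_k b_k\equiv 0\pmod 2$, contradicting $\sum_k b_k=N$ odd.

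The main obstacle is making the mod-$2$ reduction rigorous. The quantities $x_1^i+x_1^{-i}$ are not integral, so I would avoid the informal phrase ``constant term''. Instead I would work directly in $\mathcal{O}_K/2\mathcal{O}_K$: every term of the multiplied equation is an element of $\mathcal{O}_K$, and $\frac{7^C(x_1^i+x_1^{-i})-2}{4}\in\mathcal{O}_K$. After division by $4N$, the equation reads
$$7^C(b_1-b_2)+\sum_{i=1}^{2b}(b_{2i+1}-b_{2i+2})(1+2\mu_i)=0$$
with $\mu_i\in\mathcal{O}_K$. Reducing modulo $2\mathcal{O}_K$ shows that the rational integer $\sum_k b_k$ lies in $2\mathcal{O}_K\cap\mathbb{Z}=2\mathbb{Z}$, which gives the contradiction.
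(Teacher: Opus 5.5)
Your proposal is correct and follows essentially the same route as the paper. You partition $\mathbb{Z}_{2N}$ by the ratio $\alpha_f(\lambda+e)/\alpha_f(\lambda)\in S$, use the involution $\lambda\mapsto\lambda+e$ to get $\sum_k b_k=N$, derive the orthogonality relation, multiply by $7^C$, and reduce modulo $2$.

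Your reduction in $\mathcal{O}_K/2\mathcal{O}_K$ is a basis-free version of the paper's ``constant term'' comparison. The two are equivalent because $1,\zeta_{21},\dots,\zeta_{21}^{11}$ is a $\mathbb{Z}$-basis of $\mathcal{O}_K$. Your remark that the elements of $S$ are pairwise distinct makes explicit a point the paper uses without stating it.
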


\section{Some nonexistence results via computational methods} \label{section_scattered_results}

In this section, we consider cases having a similar setting to theorems \ref{thm_two_primes_3_and_5} to \ref{thm_twoprimes_3and1} and case (2) of theorem 4.1 of \cite{FKQ_earliest}, but do not satisfy the condition of $2$ having its maximum possible order. Thus we fail to get a closed-form conclusion, but we may still apply the element partition method and generalize the results in \cite{FKQ_earliest} and \cite{Feng_Liu}. PARI/GP codes of programs used in this section are available at the file ``codes for section 6'' at the website mentioned at the end of section \ref{section_introduction}.

Throughout this section, it is assumed that $N=p_{1}^{e_1} p_{2}^{e_2}$, $(\frac{p_1}{p_2})=-1$, $K=\mathbb{Q}(\zeta_{N})$ and $D$ is the decomposition field of $2$ in $K$. Moreover, it is assumed that $p_2 \equiv 1 \pmod{4}$. Let $f_1$ be the order of $2 \ \mathrm{mod} \ p_1$ and $f_2$ be the order of $2 \ \mathrm{mod} \ p_2$, let $f$ be the order of $2 \ \mathrm{mod} \ p_1 p_2$, then $f=\mathrm{lcm}(f_1,f_2)$. Similar to the analysis in theorem \ref{thm_two_primes_3_and_5}, assume that $p_1$ and $p_2$ are not Wieferich primes, then since $(\frac{p_1}{p_2})=(\frac{p_2}{p_1})=-1$, we have that $p_1 \nequiv 1 \pmod{p_2}$ and $p_2 \nequiv 1 \pmod{p_1}$, so the order of $2 \ \mathrm{mod} \ N$ is $f \cdot p_{1}^{e_1-1} p_{2}^{e_2-1}$, i.e. $D \subseteq \mathbb{Q}(\zeta_{p_1 p_2})$, and we may without loss of generality assume that $N=p_1 p_2$, as implemented in our programs.

\subsection{\texorpdfstring{Case $p_1 \equiv 3 \pmod{8}$, $p_2 \equiv 5 \pmod{8}$}{Case p1 =3 mod 8, p2=5 mod 8}}

We illustrate our idea in detail under the setting of theorem \ref{thm_two_primes_3_and_5}, while the case of the other two theorems in section \ref{direct_consequence_of_EPM} and case (2) of theorem 4.1 of \cite{FKQ_earliest} (theorem 2 of \cite{Lv_Li}) share a similar manner.

Let $N=p_{1}^{e_1} p_{2}^{e_2}$, where $p_1 \equiv 3 \ (\mathrm{mod} \ 8)$ and $p_2 \equiv 5 \ (\mathrm{mod} \ 8)$ are two primes satisfying $(\frac{p_1}{p_2})=-1$. Then by Lemma 2.2 of \cite{FKQ_earliest} and by the analysis in the proof of theorem 4.1 of  \cite{FKQ_earliest}, it suffices to check conditions \ref{condition_1} and \ref{condition_2} in $\mathcal{O}_L$, where $L$ is the unique subfield of $K$ with $[L:D]=2$. We do this by programming in PARI/GP. Note that the correctness of the main command used in our program, \texttt{bnfinit}, is based on GRH. Thus, unless otherwise stated, all the results in this section are based on the assumption of GRH.

Assume that $f$ is a GBF of type $[n,2N]$. Following the notations of \cite{FKQ_earliest}, let $[D:\mathbb{Q}]=2s$ and let $E=\mathbb{Q}(\sqrt{-p_1 p_2})$, then $2 \mathcal{O}_L=\mathfrak{P}_1 \cdots \mathfrak{P}_s \overline{\mathfrak{P}_1} \cdots \overline{\mathfrak{P}_s}$, and to check conditions \ref{condition_1} and \ref{condition_2}, it suffices to check that for each principal ideal $I \subseteq \mathcal{O}_L$ such that $I \overline{I}=2^{n} \mathcal{O}_L$, $(2) \nmid I$ and for $I_1 \overline{I_1}=I_2 \overline{I_2}=2^{n} \mathcal{O}_L$, if $I_1 \neq I_2$, then $I_1$ and $I_2$ do not have the same support. So we need to find all principal ideals $I \subseteq \mathcal{O}_L$ such that $I \overline{I}=2^{n} \mathcal{O}_L$.

First we explain how to identify the ideals $\mathfrak{P}_1, \cdots, \mathfrak{P}_s, \overline{\mathfrak{P}_1}, \cdots, \overline{\mathfrak{P}_s}$ in PARI/GP. First we use the command \texttt{idealprimedec} to obtain all the prime ideals lying above $2$ in $\mathcal{O}_L$ as a list, and assume the first ideal in the list to be $\mathfrak{P}_1$. Then we use the command \texttt{rnfidealdown} to find all ideals $\mathfrak{P}_1, \cdots, \mathfrak{P}_s$ lying above the same prime ideal of $\mathcal{O}_E$ as $\mathfrak{P}_1$. Then we fix a root $\gamma$ of the defining polynomial of $L$ via the command \texttt{L.roots}, obtain all the automorphisms of $L$ via the command \texttt{galoisinit} and \texttt{galoispermtopol}, apply each automorphism to $\gamma$, find the automorphism $\tau$ that $\tau(\gamma)-\overline{\gamma}$ has a numerically very small absolute value and identify it as the complex conjugation. Finally we apply $\tau$ to $\mathfrak{P}_i$ to obtain $\overline{\mathfrak{P}_i}$.

Let $2 \mathcal{O}_E=\mathfrak{p} \overline{\mathfrak{p}}$, and without loss of generality assume that each $\mathfrak{P}_i$ lies over $\mathfrak{p}$. Suppose $I=(\alpha)= \mathfrak{P}_{1}^{n-i_1} \cdots \mathfrak{P}_{s}^{n-i_s} {\overline{\mathfrak{P}_1}}^{i_1} \cdots {\overline{\mathfrak{P}_s}}^{i_s}$, let $k=\sum_{m=1}^{s} (n-i_m)$ and $l=\sum_{m=1}^{s} i_m$. Consider $\mathrm{N}_{L/E}(I)$ we have that $\mathrm{N}_{L/E}(\alpha) \ \mathcal{O}_E={\mathfrak{p}}^{2k} {\overline{\mathfrak{p}}}^{2l}$, since $\mathrm{f}(\mathfrak{P}_i | \mathfrak{p})=\mathrm{f}(\overline{\mathfrak{P}_i} | \overline{\mathfrak{p}})=2$. Hence $2(k-l)$ must be a multiple of the order of $[\mathfrak{p}]$ in $\mathrm{Cl}(E)$, which is $2m$ by \cite{FKQ_earliest}, where $m$ has the same meaning as in theorem \ref{thm_two_primes_3_and_5}. Hence $k-l$ must be a multiple of $m$, and by taking conjugates we may assume that $l \leq k$. Note that $k+l=ns$, so we may solve all possible pairs $(k,l)$ using the above conditions. Then for a fixed pair $(k,l)$, we solve all possible solutions to $\sum_{m=1}^{s} x_m=l$ where $x_m \in \mathbb{Z}$ and $0 \leq x_m \leq n$, let $i_m=x_m$ and check if the corresponding $I=\mathfrak{P}_{1}^{n-i_1} \cdots \mathfrak{P}_{s}^{n-i_s} {\overline{\mathfrak{P}_1}}^{i_1} \cdots {\overline{\mathfrak{P}_s}}^{i_s}$ is principal in $\mathcal{O}_L$ using the command \texttt{bnfisprincipal}. Thus we may obtain all principal ideals $I \subseteq \mathcal{O}_L$ satisfying $I \overline{I}=2^{n} \mathcal{O}_L$, and then check conditions \ref{condition_1} and \ref{condition_2} straightforwardly. We note that, compared to checking all possible values of $(i_1,\cdots,i_s)$ directly, our method of combining some analysis of $E=\mathbb{Q}(\sqrt{-p_1 p_2})$ has better efficiency.

We illustrate some examples. For $p_1=19$, $p_2=13$ and $N=p_{1}^{e_1} p_{2}^{e_2}$, we have that $m=3$ and $s=3$, so theorem 4.1 of \cite{FKQ_earliest} does not give a meaningful bound in this case. Let $2 \mathcal{O}_L=\mathfrak{P}_1 \mathfrak{P}_2 \mathfrak{P}_3 \overline{\mathfrak{P}_1} \overline{\mathfrak{P}_2} \overline{\mathfrak{P}_3}$. Our program shows that solutions to $I \overline{I}=2^{n} \mathcal{O}_L$ are $I={(\mathfrak{P}_1 \mathfrak{P}_2 \mathfrak{P}_3)}^{3}$ and $I={(\overline{\mathfrak{P}_1} \overline{\mathfrak{P}_2} \overline{\mathfrak{P}_3})}^{3}$, so conditions \ref{condition_1} and \ref{condition_2} are satisfied, and there is no GBF of type $[3,2N]$ in this case.

We note that, there appear to be several errors in the tables in examples 2 and 3 of \cite{FKQ_earliest}. For example, setting $p_1=67$, $p_2=13$ and $N=p_{1}^{e_1} p_{2}^{e_2}$, we have that $m=11$ and $s=3$, instead of $s=1$ as claimed in \cite{FKQ_earliest}. So theorem 4.1 of \cite{FKQ_earliest} actually shows the nonexistence of type $[3,2N]$ GBF. On the other hand, our program shows that when $n=11$, the only solutions to $I \overline{I}=2^{n} \mathcal{O}_L$ are $I={(\mathfrak{P}_1 \mathfrak{P}_2 \mathfrak{P}_3)}^{11}$ and $I={(\overline{\mathfrak{P}_1} \overline{\mathfrak{P}_2} \overline{\mathfrak{P}_3})}^{11}$, so there is no GBF of type $[11,2N]$. (hence type $[n,2N]$ for all odd $n \leq 11$, since if there were, then a GBF of type $[11,2N]$ may be constructed. One may also justify this by applying remark \ref{remark_min_solution} to show that there are actually no solutions to $\alpha \in \mathcal{O}_K, \  \alpha \overline{\alpha}=2^{n}$ for $K=\mathbb{Q}(\zeta_{N})$ when $n<11$.)

There are also examples where there are solutions to $\alpha \in \mathcal{O}_K$, $\alpha \overline{\alpha}=2^{n}$, where $n<m$. (Note that there exists $\alpha \in \mathcal{O}_E$ such that $\alpha \overline{\alpha}=2^{m}$.) Let $p_1=11$, $p_2=61$ and $N=p_{1}^{e_1} p_{2}^{e_2}$, then $m=15$ and $s=5$, then our program shows that when $n=13$, there are $10$ principal ideals $I \subseteq \mathcal{O}_L$ such that $I \overline{I}=2^{n} \mathcal{O}_L$, which are conjugates (images under elements in $\mathrm{Gal}(L / \mathbb{Q})$) of one such ideal. Our program shows that conditions \ref{condition_1} and \ref{condition_2} are satisfied in this case, so there is no type $[13,2N]$ GBF. Choose one of these $10$ principal ideals, use \texttt{bnfisprincipal} to recover a generator $\alpha$, one checks directly that $\alpha \overline{\alpha}=2^{13}$.

We conclude all these in the following proposition:

\begin{prop}
Assume GRH, then there is no type $[n,2N]$ GBF where $N=p_{1}^{e_1} p_{2}^{e_2}$ and $(n,p_1,p_2)$ take the following values: $(3,19,13), (11,67,13)$ and $(13,11,61)$.
\end{prop}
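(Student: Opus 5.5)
The plan is to treat the three parameter triples one at a time, running the same computational procedure described in the preceding paragraphs, and then invoke the element partition method of section \ref{section_review_EPM}. For each triple $(n,p_1,p_2)\in\{(3,19,13),(11,67,13),(13,11,61)\}$ we have $p_1\equiv 3\pmod 8$, $p_2\equiv 5\pmod 8$ and $(\frac{p_1}{p_2})=-1$. So both primes are self-conjugate mod $N$, and lemma \ref{lemma_self_conj_reduce_to_2} reduces every Fourier coefficient $\alpha_f(\lambda)$ to an element $\beta(\lambda)\in\mathcal{O}_K$ with $\beta(\lambda)\overline{\beta(\lambda)}=2^n$. By the non-Wieferich remark at the start of this section, we may take $N=p_1p_2$. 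By Lemma 2.2 and the proof of theorem 4.1 of \cite{FKQ_earliest}, every such $\beta$ becomes, after multiplication by a root of unity, an element of $\mathcal{O}_L$, where $L$ is the subfield of $K$ with $[L:D]=2$. Since every prime above $2$ in $L$ is inert in $K$, supports and divisibility by $2$ are unchanged under extension to $K$. Hence conditions \ref{condition_1} and \ref{condition_2} for $K$ follow from the corresponding ideal-theoretic statements in $\mathcal{O}_L$: every principal $I$ with $I\overline{I}=2^n\mathcal{O}_L$ satisfies $(2)\nmid I$, and two distinct such ideals never have the same support.

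The key step is to enumerate all principal $I$ with $I\overline{I}=2^n\mathcal{O}_L$. Writing $2\mathcal{O}_L=\mathfrak{P}_1\cdots\mathfrak{P}_s\overline{\mathfrak{P}_1}\cdots\overline{\mathfrak{P}_s}$ (with $s=3,3,5$ respectively), any such $I$ has the form $\prod\mathfrak{P}_m^{n-i_m}\overline{\mathfrak{P}_m}^{i_m}$. Taking $\mathrm{N}_{L/E}$ with $E=\mathbb{Q}(\sqrt{-p_1p_2})$ shows that $k-l$ is a multiple of $m$, where $m=3,11,15$ is the integer from theorem \ref{thm_two_primes_3_and_5}. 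Using $k+l=ns$ and conjugation symmetry ($l\le k$), this cuts the candidate exponent vectors down to a short list. Each candidate is then tested with \texttt{bnfisprincipal}. The expected outcomes are as follows. For $(3,19,13)$ and $(11,67,13)$ only $(\mathfrak{P}_1\mathfrak{P}_2\mathfrak{P}_3)^n$ and its conjugate survive. For $(13,11,61)$ exactly ten ideals survive, forming one Galois orbit.

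With the list in hand, the conditions are checked directly. Condition \ref{condition_2} holds because no surviving exponent vector has all $n-i_m\ge1$ and $i_m\ge1$ simultaneously. Condition \ref{condition_1} holds because distinct surviving ideals have distinct supports. For the ten ideals in the last case I would check the supports explicitly; this is where a failure could occur. Once both conditions hold, the proposition of section \ref{section_review_EPM} gives $\alpha_f(\lambda)=\pm\alpha_f(\lambda+v)$, and the counting argument of \cite{Li_Deng}, \cite{Lv_Li} gives the contradiction.

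The main obstacle is trust in the class group computations, since \texttt{bnfinit} is correct only under GRH; that is why GRH appears in the statement. A secondary concern is recovering a generator in the $(13,11,61)$ case to confirm that solutions really exist, which is done by checking $\alpha\overline{\alpha}=2^{13}$ directly.
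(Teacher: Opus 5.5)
Your proposal is correct and follows the paper's own route. You reduce, via Lemma 2.2 and the proof of theorem 4.1 of \cite{FKQ_earliest}, to principal ideals $I\subseteq\mathcal{O}_L$ with $I\overline{I}=2^n\mathcal{O}_L$; prune the exponent vectors using $\mathrm{N}_{L/E}$ and the order $2m$ of $[\mathfrak{p}]$ in $\mathrm{Cl}(E)$; test principality with \texttt{bnfisprincipal} (which is where GRH enters); and check conditions \ref{condition_1} and \ref{condition_2} on the survivors. The outcomes you expect — only $(\mathfrak{P}_1\mathfrak{P}_2\mathfrak{P}_3)^n$ and its conjugate for the first two triples, and one Galois orbit of ten ideals for $(13,11,61)$ — are exactly what the paper's computation reports.
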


\begin{remark}
Essentially, our program solves a PIP problem in a number field of degree $4s$. Since this computational task is exponential in general, when $s \geq 7$, the computation can be quite time-consuming. However, if $s \leq 5$ and $p_1 p_2$ is not too large, the program runs quite rapidly, so we may get more results similar to those in the above proposition.

One may run the command \texttt{bnfcertify} in PARI/GP to remove the assumption of the GRH. However, this command is quite time-consuming in general. Another possible way to remove the GRH assumption is to follow the method of using Stickelberger relations in section 5 of \cite{Lv_Li}, but that method needs extra analysis, so is not very suitable for being programmed directly. On the other hand, the method in \cite{Lv_Li} relies on the fact that $2 \nmid h(\mathbb{Q}(\zeta_{151}))$, but much less is known about the class number of $K=\mathbb{Q}(\zeta_{N})$ in our cases, which increases the difficulty of removing the assumption of the GRH.
\end{remark}

\subsection{\texorpdfstring{Case $p_1 \equiv 7 \pmod{8}$, $p_2 \equiv 5 \pmod{8}$}{Case p1=7 mod 8, p2=5 mod 8}}

In this subsection we present some similar results concerning case (2) of theorem 4.1 of \cite{FKQ_earliest}. We note however that, in this case, the assumption of the GRH can sometimes be removed.

Consider $p_1 \equiv 7 \pmod{8}$, $p_2 \equiv 5 \pmod{8}$ and $N=p_{1}^{e_1} p_{2}^{e_2}$. By results in \cite{FKQ_earliest}, it suffices to check conditions \ref{condition_1} and \ref{condition_2} for solutions to PIP in $\mathcal{O}_D$. Let $E=\mathbb{Q}(\sqrt{-p_1})$ and in this case we use the norm map $\mathrm{N}_{D/E}$ to lessen the cases to be checked. Thus, similar as above, by programming in PARI/GP, we have the following proposition:

\begin{prop}
Assume GRH, then there is no type $[n,2N]$ GBF, where $N=p_{1}^{e_1} p_{2}^{e_2}$ and $(n,p_1,p_2)$ take the following values: $(7,71,61)$, $(5,79,37)$ and $(5,79,61)$. Note that these are cases in the table of example 3 in \cite{FKQ_earliest}, but it is claimed that $s=1$ for all these cases in \cite{FKQ_earliest}, while actually $s=5,3 \ \text{and} \ 3$ respectively.
\end{prop}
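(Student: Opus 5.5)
The statement is a PARI/GP-assisted assertion about case (2) of theorem 4.1 of \cite{FKQ_earliest}: $p_1 \equiv 7 \pmod 8$, $p_2 \equiv 5 \pmod 8$, $(\frac{p_1}{p_2})=-1$. For each of the three triples $(n,p_1,p_2)$ I would verify conditions \ref{condition_1} and \ref{condition_2} of section \ref{section_review_EPM}, and then invoke the element partition method to rule out a type $[n,2N]$ GBF.

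\textbf{Step 1: reduction.} Since neither prime is a Wieferich prime, we may take $N=p_1p_2$ as in the opening of section \ref{section_scattered_results}. By lemma \ref{lemma_self_conj_reduce_to_2}, every Fourier coefficient divided by $\omega_N^n$ is a solution of $\gamma\overline{\gamma}=2^n$ in $\mathcal{O}_K$. By the descent in the proof of theorem 4.1 of \cite{FKQ_earliest} (the analogue of lemma 2.2 there), every such $\gamma$ equals a root of unity times an element of $\mathcal{O}_D$, where $D$ is the decomposition field of $2$ in $K$. Prime ideals of $D$ above $2$ are inert in $K$, so supports and divisibility by $2$ can be read off in $D$. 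It therefore suffices to enumerate all principal ideals $I\subseteq\mathcal{O}_D$ with $I\overline{I}=2^n\mathcal{O}_D$.

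\textbf{Step 2: enumeration.} First compute $s$ and identify $D$ (degree $4s$) with \texttt{galoissubcyclo}. Then list the primes above $2$ via \texttt{idealprimedec}, and find complex conjugation numerically as in the previous subsection. This gives $2\mathcal{O}_D=\prod_i \mathfrak{P}_i\overline{\mathfrak{P}_i}$.

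To prune the search, use $E=\mathbb{Q}(\sqrt{-p_1})$, in which $2=\mathfrak{p}\overline{\mathfrak{p}}$ splits since $p_1\equiv 7\pmod 8$. Write $I=\prod\mathfrak{P}_i^{n-i_m}\overline{\mathfrak{P}_i}^{\,i_m}$ and apply $\mathrm{N}_{D/E}$. This yields $\mathfrak{p}^{c k}\overline{\mathfrak{p}}^{\,c l}$ principal, where $c$ is the residue degree. Hence $c(k-l)$ must be a multiple of the order of $[\mathfrak{p}]$ in $\mathrm{Cl}(E)$, and $k+l$ is fixed. For each surviving exponent vector, test principality with \texttt{bnfisprincipal}.

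\textbf{Step 3: checking the conditions.} Verify that no resulting ideal is divisible by $(2)$, which is condition \ref{condition_2}. Also verify that distinct resulting ideals have distinct supports, which is condition \ref{condition_1}. As a sanity check, recover a generator $\alpha$ of one such ideal and confirm $\alpha\overline{\alpha}=2^n$ exactly. The unit issue (a generator might only satisfy $\alpha\overline{\alpha}=2^n u$) is handled as in the proof of theorem \ref{thm_twoprimes_3and1}: adjust $\alpha$ by a unit of the real subfield, or check $\alpha\overline{\alpha}$ directly.

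\textbf{Step 4: the recomputed $s$.} Recompute $\mathrm{ord}_{p_1p_2}(2)$ for each pair and confirm that $s=5,3,3$ respectively. This uses only $s=\varphi(N)/(4\,\mathrm{ord}_N(2))$ in the normalization of \cite{FKQ_earliest}.

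The main obstacle is the principal ideal testing in degree $4s$ fields, up to degree $20$ for $(71,61)$. This relies on \texttt{bnfinit}, whose correctness needs GRH, which is why the proposition assumes it. The $\mathrm{Cl}(E)$-norm pruning is what I would rely on to keep the search feasible.
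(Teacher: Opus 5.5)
Your route is the paper's: reduce via \cite{FKQ_earliest} to principal ideals $I\subseteq\mathcal{O}_D$ with $I\overline{I}=2^n\mathcal{O}_D$. You then prune with $\mathrm{N}_{D/E}$ for $E=\mathbb{Q}(\sqrt{-p_1})\subseteq D$, test principality with \texttt{bnfisprincipal} under GRH, and read off conditions \ref{condition_1} and \ref{condition_2}. Checking all such principal $I$ already suffices, so the unit adjustment is only needed for your sanity check.

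There is, however, a concrete error in your normalization of $s$, and Step 4 fails as written. In theorem 4.1 of \cite{FKQ_earliest} one has $g=\varphi(N)/\mathrm{ord}_N(2)=[D:\mathbb{Q}]$ and $s=g/2$, as recalled just before Theorem \ref{thm_two_primes_3_and_5}. Your formula $s=\varphi(N)/(4\,\mathrm{ord}_N(2))$ is the Feng--Liu normalization for $p_2\equiv 1\pmod 8$, which does not apply here.

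Concretely, $\mathrm{ord}_{71}(2)=35$ and $\mathrm{ord}_{79}(2)=39$, while $2$ is a primitive root mod $61$ and mod $37$. Hence $\mathrm{ord}_N(2)=420, 468, 780$ against $\varphi(N)=4200, 2808, 4680$. This gives $g=10,6,6$ and $s=5,3,3$, whereas your formula returns $5/2, 3/2, 3/2$ and so does not confirm the note in the statement.

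It follows that $D$ has degree $2s$, namely $10,6,6$, not $4s$, and no degree-$20$ principal ideal computation arises. This is exactly the point the paper makes for this case. Here the reduction of \cite{FKQ_earliest} lands in $\mathcal{O}_D$ itself, as you correctly say in Step 1. In the previous subsection one had to work in the quadratic extension $L$ of $D$, of degree $4s$. Correspondingly, the residue degree $c$ in your norm pruning is simply $1$.
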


Similar to the above remark, we may get more nonexistence results quite rapidly using our program when $p_1 p_2$ is not too large and $s \leq 5$, since we are now working in a field of degree $2s$. But as $s$ gets larger, the program becomes more time-consuming.

In all cases in the above proposition, the conductor of $D$ is $p_1 p_2$. When the conductor of the decomposition field $D$ is $p_2$, we may get some unconditional results:

\begin{prop}
Let $p_1=31$, $p_2 \equiv 5 \pmod{8}$ be a prime, $2^{p_2-1} \nequiv 1 \pmod{p_{2}^{2}}$. Let $N=p_{1}^{e_1} p_{2}^{e_2}$. If $(\frac{p_1}{p_2})=-1$, $2$ is a primitive root $\mathrm{mod} \ p_2$ and $p_2 \nequiv 1 \pmod{5}$ (note that these conditions guarantee that $p_2 \nequiv 1 \pmod{31}$), then there is no GBF of type $[3,2N]$.
\end{prop}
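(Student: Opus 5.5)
We apply the element partition method with $n=3$, so it suffices to verify conditions \ref{condition_1} and \ref{condition_2}. This time we want an unconditional verification, which we get by working in a quadratic field where everything can be checked by hand.

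\textbf{Locating the decomposition field.} With $p_1=31$ we have $\mathrm{ord}_{31}(2)=5$. We assume $2$ is a primitive root mod $p_2$ and that $p_2$ is not Wieferich. The hypothesis $p_2\not\equiv 1 \pmod 5$ gives $\gcd(5,p_2-1)=1$. Hence, as in Lemma \ref{lemma_order_mod_p^s} and the discussion at the start of this section, the order of $2$ modulo $N$ is $5(p_2-1)\,31^{e_1-1}p_2^{e_2-1}$, which equals $\varphi(N)/6$.

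The decomposition field $D$ of $2$ in $K$ then has degree $6$. The element $2$ generates the index-$6$ subgroup of $(\mathbb{Z}/31)^*\times(\mathbb{Z}/p_2)^*$ (suppressing the $p$-power parts). This subgroup has the form $H_{31}\times(\mathbb{Z}/p_2)^*$, where $H_{31}$ is the order-$5$ subgroup. Therefore $D$ is the sextic subfield of $\mathbb{Q}(\zeta_{31})$, which has conductor $31$ and contains $E=\mathbb{Q}(\sqrt{-31})$.

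Following the reduction of \cite{FKQ_earliest} invoked in the previous proposition, any $\alpha\in\mathcal{O}_K$ with $\alpha\overline\alpha=2^3$ can be moved by a root of unity into $\mathcal{O}_D$. Conditions \ref{condition_1} and \ref{condition_2} therefore reduce to statements about principal ideals $I\subseteq\mathcal{O}_D$ with $I\overline I=8\mathcal{O}_D$. Since each prime of $D$ above $2$ is inert in $K$, distinct supports in $D$ remain distinct in $K$.

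\textbf{Analysis in $D$.} We have $2\mathcal{O}_D=\mathfrak{P}_1\mathfrak{P}_2\mathfrak{P}_3\overline{\mathfrak{P}_1}\,\overline{\mathfrak{P}_2}\,\overline{\mathfrak{P}_3}$, and $2\mathcal{O}_E=\mathfrak{p}\overline{\mathfrak{p}}$ with $\mathfrak{p}=\big(\tfrac{1+\sqrt{-31}}{2},2\big)$. Since $h(E)=3$ and $[\mathfrak{p}]$ has order $3$, we have $\mathfrak{p}^3=\big(\tfrac{1+\sqrt{-31}}{2}\big)$, of norm $8$.

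Write $I=\prod\mathfrak{P}_i^{3-i_m}\overline{\mathfrak{P}_i}^{\,i_m}$. The relative norm $\mathrm{N}_{D/E}$ is taken in the cubic extension $D/E$, in which each prime above $2$ is split. It gives $\mathfrak{p}^{k}\overline{\mathfrak{p}}^{\,l}$ with $k+l=9$, and this ideal must be principal. Hence $k-l\equiv 0\pmod 3$, which combined with $k+l=9$ leaves $(k,l)=(9,0),(6,3),(3,6),(0,9)$.

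We then need the class group of $D$, the sextic field of conductor $31$. Its class number is small (I expect $h(D)=1$; the relevant unconditional data is in the tables of \cite{IntroductionToCyclotomicFields}, or can be confirmed with \texttt{bnfcertify}, which is fast for a field of this size). We also need the unit group of $D$ relative to that of $D^+$, the cubic field of conductor $31$.

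From this data we decide which $I$ are principal and which generators can be adjusted by units to satisfy $\alpha\overline\alpha=8$ exactly. The unit adjustment should use a parity argument like the one with $\mathrm{N}(u)>0$ in the proof of Theorem \ref{thm_twoprimes_3and1}, or Lemma \ref{criteria_for_A2}.

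\textbf{Expected outcome and main obstacle.} I expect the only solutions to be $I=(\mathfrak{P}_1\mathfrak{P}_2\mathfrak{P}_3)^3=\mathfrak{p}^3\mathcal{O}_D$ and its conjugate, that is, $\alpha=\tfrac{1\pm\sqrt{-31}}{2}$ up to roots of unity. These are not divisible by $2$ and have disjoint supports, which gives both conditions and hence nonexistence of type $[3,2N]$.

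The main obstacle is the mixed case $(k,l)=(6,3)$, for example $I=\mathfrak{P}_1^3\mathfrak{P}_2^3\overline{\mathfrak{P}_3}^{\,3}$. Here we must show that no such ideal is generated by an $\alpha$ with $\alpha\overline\alpha=8$. If $h(D)=1$, every such ideal is principal, and the obstruction must come from units. The question becomes whether the totally positive unit $8/(\gamma\overline\gamma)\in D^+$ is a norm from $D$. I would settle this first by explicit computation in $D^+$ via signatures of the cubic units, and fall back on a Stickelberger-type argument if needed.
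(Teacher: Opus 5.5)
Your identification of $D$ with the sextic subfield of $\mathbb{Q}(\zeta_{31})$ is correct, and it is exactly what the paper's proof establishes ($D=D'$). The reduction to $\mathcal O_D$ and the restriction $(k,l)\in\{(9,0),(6,3),(3,6),(0,9)\}$ are also fine. The gap is the decisive step. You never determine the principal $I$ with $I\overline I=8\mathcal O_D$, and both predictions you make about them are false, so the route you outline for the ``main obstacle'' cannot succeed.

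First, $h(D)\neq 1$, and this already follows from your own data. $\mathrm{N}_{D/E}(\mathfrak P_i)$ is $\mathfrak p$ or $\overline{\mathfrak p}$, which is not principal, so no prime of $D$ above $2$ is principal. In fact $h(D^+)=1$, and the analytic class number formula gives $h^-(D)=2\prod_{\chi\ \mathrm{odd}}(-\tfrac12B_{1,\chi})=2\cdot\tfrac32\cdot\tfrac14|B_{1,\chi}|^2=9$, using $B_{1,\chi}=-3\mp\sqrt{-3}$ for the odd characters of order $6$. What cuts down the candidates is principality in $\mathrm{Cl}(D)$, not a unit or sign argument.

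Second, the mixed cases really do contain solutions. Let $\chi$ be a character of order $31$ on $\mathbb F_{32}^*$ and set $\alpha_0=\sum_{x\in\mathbb F_{32},\,\mathrm{Tr}(x)=1}\chi(x)$.
\begin{itemize}
\item The Gauss sum $g(\chi)=\sum_{x\neq 0}\chi(x)(-1)^{\mathrm{Tr}(x)}$ equals $-2\alpha_0$, so $\alpha_0\overline{\alpha_0}=8$.
\item $\alpha_0$ is fixed by $\sigma_2$, since $x\mapsto x^2$ preserves the trace, so $\alpha_0\in\mathcal O_D$.
\item By Stickelberger's theorem, the valuations of $g(\chi)$ at the six primes above $2$ are the binary digit sums $1,2,2,3,3,4$ of the coset representatives $1,3,5,7,11,15$ of $(\mathbb Z/31\mathbb Z)^*/\langle 2\rangle$.
\item Hence $(\alpha_0)$ has valuations $0,1,1,2,2,3$, and its relative norm to $E$ is $\mathfrak p^3\overline{\mathfrak p}^{\,6}$ or the conjugate.
\end{itemize}
So one of $(\alpha_0)$, $(\overline{\alpha_0})$ lies in your case $(6,3)$. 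Your claim that the only solutions are $\tfrac{1\pm\sqrt{-31}}{2}$ up to sign is false, and the mixed case cannot be ruled out.

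The proposition still holds, but for a different reason than the one you expect. Each of the six Galois conjugates of $(\alpha_0)$ has valuation $0$ at exactly one prime above $2$. So none is divisible by $2$, and their supports are pairwise distinct and differ from those of $\mathfrak p^3\mathcal O_D$ and $\overline{\mathfrak p}^{\,3}\mathcal O_D$.

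To finish along your lines, you would still need to show there are no further solutions. One way is to prove that $\mathrm{Cl}(D)\cong\mathbb Z/9\mathbb Z$, generated by $[\mathfrak P_1]$, with a suitable generator of $\mathrm{Gal}(D/\mathbb Q)$ acting as multiplication by $2$. This follows from three facts:
\begin{itemize}
\item the Stickelberger relation coming from $(\alpha_0)$;
\item complex conjugation acts as $-1$ on $\mathrm{Cl}(D)$, because $h(D^+)=1$;
\item $\mathrm{Cl}(E)\to\mathrm{Cl}(D)$ is injective.
\end{itemize}
Enumerating exponent vectors then gives exactly eight principal ideals, in Galois orbits of sizes $2$ and $6$.

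The paper avoids all of this. Once $D=D'$ is established, it cites \cite{Lv_Li}, where the solutions of $I\overline I=2^3\mathcal O_{D'}$ were determined unconditionally and conditions \ref{condition_1} and \ref{condition_2} were verified.
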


\begin{proof}
We have that $\mathrm{ord}_{31^{e_1}}(2)=5 \cdot 31^{e_1-1}$. Under the assumption of $2^{p_2-1} \nequiv 1 \pmod{p_{2}^{2}}$, we have that $\mathrm{ord}_{p_2}(2)=(p_2-1) \cdot p_{2}^{e_2-1}$, so $\mathrm{ord}_{N}(2)=\mathrm{lcm}(5 \cdot 31^{e_1-1},(p_2-1) \cdot p_{2}^{e_2-1})=5 \cdot 31^{e_1-1} \cdot (p_2-1) \cdot p_{2}^{e_2-1}=\frac{\varphi(N)}{6}$ since $p_2 \neq 5$ (note that $(\frac{31}{5})=1$). Thus we have that $[D:\mathbb{Q}]=6$. Let $D^{\prime}$ be the decomposition field of $2$ in $\mathbb{Q}(\zeta_{31})$. Then we have that $[D: \mathbb{Q}]=[D^{\prime}: \mathbb{Q}]=6$ and $D^{\prime} \subseteq D$, so $D=D^{\prime}$. But solutions to $I \overline{I}=2^{3} \mathcal{O}_{D^{\prime}}$, $I \subseteq \mathcal{O}_{D^{\prime}}$ have been established unconditionally in \cite{Lv_Li}, and conditions \ref{condition_1} and \ref{condition_2} are satisfied, so we see that there is no GBF of type $[3,2N]$.
\end{proof}

Similar results may be established for $p_1=127$ and $p_1=151$. Note that although the case of $p_1=127$ is not directly treated in \cite{Lv_Li}, the method is just the same.

\subsection{\texorpdfstring{Case $p_1 \equiv 7 \pmod{8}$, $p_2 \equiv 1 \pmod{8}$}{Case p1=7 mod 8, p2=1 mod 8}}

For simplicity, we maintain the assumption that $2$ is not a quartic residue $\mathrm{mod} \ p_2$ in this subsection and the following subsection. In this case, by results in \cite{Feng_Liu}, it suffices to check conditions \ref{condition_1} and \ref{condition_2} in $\mathcal{O}_D$. Since $E=\mathbb{Q}(\sqrt{-p_1})$ is contained in $D$, we follow the strategy above and use the norm map $\mathrm{N}_{D/E}$ to lessen cases to be checked. So by running our program, we have the following proposition:

\begin{prop}
Assume GRH, then there is no type $[n,2N]$ GBF, where $N=p_{1}^{e_1} p_{2}^{e_2}$ and $(n,p_1,p_2)$ take the following values: $(3,31,17)$, $(7,71,41)$. Note that the upper bounds of theorem 1 of \cite{Feng_Liu} are $1$ and $\frac{7}{5}$, so our results indeed generalize theorem 1 of \cite{Feng_Liu}.
\end{prop}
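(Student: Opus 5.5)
The plan is to follow the computational scheme of the subsection on $p_1 \equiv 3 \pmod 8$, $p_2 \equiv 5 \pmod 8$, now in the setting $p_1 \equiv 7 \pmod 8$, $p_2 \equiv 1 \pmod 8$. The goal is to verify condition \ref{condition_1} and condition \ref{condition_2} for $(n,p_1,p_2)=(3,31,17)$ and $(7,71,41)$.

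\textbf{Reduction to a small field.} First we reduce to $N=p_1p_2$. Under the non-Wieferich assumption the order of $2$ modulo $N$ is $f\cdot p_1^{e_1-1}p_2^{e_2-1}$, so the decomposition field $D$ of $2$ in $K$ is a subfield of $\mathbb{Q}(\zeta_{p_1p_2})$. Next, by the argument in the proof of theorem 1 of \cite{Feng_Liu} (invoked as in the proof of Theorem \ref{thm_two_primes_1_and_7}), every $\alpha\in\mathcal{O}_K$ with $\alpha\overline{\alpha}=2^n$ satisfies $\alpha\zeta_{2N}^{b}\in\mathcal{O}_D$ for some $b$. Every prime of $D$ above $2$ is inert in $K$, so distinct ideals of $\mathcal{O}_D$ above $2$ stay distinct in $\mathcal{O}_K$ and supports are preserved. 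Divisibility by $2$ is also preserved. It therefore suffices to find all principal ideals $I\subseteq\mathcal{O}_D$ with $I\overline{I}=2^n\mathcal{O}_D$ and check two things: no such $I$ is divisible by $(2)$, and two distinct such $I$ never have the same support.

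\textbf{Enumerating candidates in $D$.} Write $2\mathcal{O}_D=\prod_{i}\mathfrak{P}_i\overline{\mathfrak{P}_i}$, where $2s=[D:\mathbb{Q}]$ ($s$ odd). Complex conjugation is identified numerically as before. Any candidate has the form $I=\prod\mathfrak{P}_i^{n-i_m}\overline{\mathfrak{P}_i}^{\,i_m}$. Since $E=\mathbb{Q}(\sqrt{-p_1})\subseteq D$, we group the $\mathfrak{P}_i$ according to the prime of $E$ above $2$ they lie over, say $2\mathcal{O}_E=\mathfrak{p}\overline{\mathfrak{p}}$. Applying $\mathrm{N}_{D/E}$ then forces $\mathfrak{p}^{a}\overline{\mathfrak{p}}^{\,c}$ to be principal, so $a-c$ is a multiple of the order of $[\mathfrak{p}]$ in $\mathrm{Cl}(E)$. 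This order equals $m_1$-type data (the minimal exponent with $x^2+p_1y^2=2^{\cdot+2}$). Together with $a+c$ being fixed by $n$ and $s$, this leaves only a few admissible pairs $(a,c)$, and taking conjugates we may assume $c\le a$. For each pair we enumerate the exponent vectors $(i_m)$ with the prescribed sum and test principality with \texttt{bnfisprincipal}. This yields, under GRH (because \texttt{bnfinit} is used), the complete list of principal $I$.

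\textbf{Checking the conditions.} For the two parameter triples we expect the output to be a small Galois-stable family, for instance the conjugates of a single ideal $\prod\mathfrak{P}_i^{n}$ over a fixed coset, each with pairwise distinct supports and none divisible by $(2)$. This gives condition \ref{condition_2} and condition \ref{condition_1}, and the element partition method then rules out GBFs of type $[n,2N]$. It remains to compare with \cite{Feng_Liu}: compute $s$ and $L=\min\{m_1,m_2\}$ for each pair and confirm that the bounds $L/s$ are $1$ and $7/5$, so the stated $n$ lie beyond them.

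The main obstacle is the principal ideal search in $D$, whose degree is $2s$. For $(71,41)$ we have $s=5$, so $D$ has degree $20$ and conductor $71\cdot 41$, and making \texttt{bnfinit} finish is the costly part. If it stalls, I would first work in the smaller field $L'\supseteq E$, or exploit the $\mathrm{Gal}(D/\mathbb{Q})$-symmetry to test only one representative per orbit of exponent vectors.
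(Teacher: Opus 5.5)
Your plan is essentially the paper's proof: reduce to $\mathcal{O}_D$ via \cite{Feng_Liu}, use $\mathrm{N}_{D/E}$ with $E=\mathbb{Q}(\sqrt{-p_1})\subseteq D$ to cut down the exponent vectors, and run a GRH-conditional \texttt{bnfisprincipal} search to check conditions \ref{condition_1} and \ref{condition_2}. One bookkeeping slip to fix: in the Feng--Liu normalization $[D:\mathbb{Q}]=g=4s$ with $s$ odd, not $2s$. So $2\mathcal{O}_D$ has $2s$ conjugate pairs of primes and $a+c=2sn$, which is what makes your later claim that $s=5$ gives $\deg D=20$ consistent.
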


\subsection{\texorpdfstring{Case $p_1 \equiv 3 \pmod{8}$, $p_2 \equiv 1 \pmod{8}$}{Case p1=3 mod 8, p2=1 mod 8}}

In this case, by results in \cite{Feng_Liu}, it also suffices to check conditions \ref{condition_1} and \ref{condition_2} in $\mathcal{O}_D$. Since in this case $D$ does not contain an imaginary quadratic subfield, we lessen cases to be checked by considering the norm $\mathrm{N}_{D/T}$, where $T$ is the unique cyclic quartic subfield of $D$. By our program we have the following proposition:

\begin{prop}
Assume GRH, then there is no type $[n,2N]$ GBF, where $N=p_{1}^{e_1} p_{2}^{e_2}$ and $(n,p_1,p_2)$ take the following values: $(23,19,97), (7,11,41), (37,67,97)$. 
\end{prop}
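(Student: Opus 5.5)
The proposition is about the case $p_1 \equiv 3 \pmod 8$, $p_2 \equiv 1 \pmod 8$, with $2$ not a quartic residue mod $p_2$, and the triples $(23,19,97)$, $(7,11,41)$, $(37,67,97)$. For each triple I will check conditions \ref{condition_1} and \ref{condition_2} for the given $n$, and then conclude by the element partition method of section \ref{section_review_EPM}. First I reduce to $N=p_1p_2$ as explained at the start of this section; this needs $p_1,p_2$ non-Wieferich, which holds by remark \ref{Wieferich_primes_are_rare}. By the proof of theorem 2 of \cite{Feng_Liu}, every $\alpha\in\mathcal{O}_K$ with $\alpha\overline{\alpha}=2^n$ becomes, after multiplying by a suitable $\zeta_{2N}^{b}$, an element of $\mathcal{O}_D$, where $D$ is the decomposition field of $2$. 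So it suffices to determine all principal ideals $I\subseteq\mathcal{O}_D$ with $I\overline{I}=2^n\mathcal{O}_D$. The primes above $2$ in $D$ are inert in $K$, so supports and equality of ideals transfer to $\mathcal{O}_K$ exactly as in the proof of theorem \ref{thm_twoprimes_3and1}.

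\textbf{Step 1: setup in $D$.} For each triple I build $D$ with \texttt{galoissubcyclo}, set $[D:\mathbb{Q}]=4s$, and factor $2\mathcal{O}_D$ into $4s$ primes using \texttt{idealprimedec}. I identify complex conjugation numerically, as in the $p_1\equiv 3$, $p_2\equiv 5$ subsection, and find the cyclic quartic subfield $T$ with its primes $\mathfrak{p}_0,\dots,\mathfrak{p}_3$ above $2$. Each $\mathfrak{p}_i$ splits into $s$ primes of $D$ of residue degree $1$ over $\mathfrak{p}_i$, since $T\subseteq D$ and $2$ splits completely in $D$.

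\textbf{Step 2: pruning by the norm.} Write a candidate ideal as $I=\prod\mathfrak{P}^{c_\mathfrak{P}}$ with $c_\mathfrak{P}+c_{\overline{\mathfrak{P}}}=n$. Then $\mathrm{N}_{D/T}(I)=\prod\mathfrak{p}_i^{k_i}$ must be principal in $T$ and must satisfy $k_0+k_2=k_1+k_3=ns$. This relation comes from $\sigma^2$ being complex conjugation on $T$. Using the $R=\mathbb{Z}[G]/(1+\sigma^2)\simeq\mathbb{Z}[\mathrm{i}]$-annihilator $(a+b\sigma)$ of $[\mathfrak{p}_0\mathfrak{p}_1]$, computed via \texttt{bnfinit}/\texttt{bnfisprincipal} in $T$, I list the admissible tuples $(k_0,\dots,k_3)$ up to the Galois action. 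For each tuple I enumerate the exponent distributions $0\le c_\mathfrak{P}\le n$ over the $s$ primes above each $\mathfrak{p}_i$ and test principality with \texttt{bnfisprincipal} in $D$.

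\textbf{Step 3: verify the conditions.} From the resulting list of principal $I$, I check two things. First, that no $I$ has all $c_\mathfrak{P}\ge1$, i.e.\ $2\nmid I$; this gives condition \ref{condition_2}. Second, that any two distinct ideals in the list have distinct supports; this gives condition \ref{condition_1}. The list must be nonempty for the statement to have content, and I would confirm nonemptiness by recovering a generator $\alpha$ and checking $\alpha\overline{\alpha}=2^n$. Strictly, principality of $I$ only gives $\alpha\overline{\alpha}=2^n u$ with $u$ a totally positive unit of $D^+$. Emptiness of the list makes the conditions hold vacuously, so this unit issue never weakens the conclusion.

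\textbf{Expected obstacle.} The hard part is computational. The larger cases, such as $(37,67,97)$, may have large $s$, which makes $D$ large and the enumeration in Step 2 exponential in $s$. The reliance on \texttt{bnfinit} is also why the statement assumes GRH. I would handle the size by maximizing the pruning in $T$ and exploiting the Galois orbits, so that only one representative per orbit is tested for principality in $D$.
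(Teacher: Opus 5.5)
Your proposal is correct and is essentially the paper's own argument. You reduce, via Feng--Liu, to principal ideals $I\subseteq\mathcal{O}_D$ with $I\overline{I}=2^n\mathcal{O}_D$, prune the candidates with $\mathrm{N}_{D/T}$ to the cyclic quartic subfield $T$, test principality with \texttt{bnfisprincipal} (hence GRH through \texttt{bnfinit}), and verify conditions \ref{condition_1} and \ref{condition_2} on the resulting superset of ideals; your $\mathbb{Z}[\mathrm{i}]$-annihilator of $[\mathfrak{p}_0\mathfrak{p}_1]$, borrowed from the proof of theorem \ref{thm_twoprimes_3and1}, is just a convenient way to organize that pruning step, not a different route.
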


\begin{remark}
We follow the notations of theorem 2 of \cite{Feng_Liu} in the context. Note that by our analysis in the proof of theorem \ref{thm_twoprimes_3and1}, $m$ is just the minimal positive integer $m$ such that there exists a principal ideal $I \subseteq \mathcal{O}_T$ satisfying $I \overline{I}=2^{m} \mathcal{O}_T$. Therefore, we determine $m$ by checking this condition in PARI/GP. We add the command \texttt{bnfcertify} for the field $T$ in the program, so the result is unconditionally correct if \texttt{bnfcertify} returns $1$. Our program shows that for the above three cases, $m=23, 7, 51$ respectively. Therefore the upper bounds of theorem 2 of \cite{Feng_Liu} are $\frac{23}{3}$, $\frac{7}{5}$ and $17$, so our results indeed generalize theorem 2 of \cite{Feng_Liu}.
\end{remark}

\section{Conclusion} \label{conclusion_and_future_work}

In this paper we give some new nonexistence results of GBFs. Both cases where all prime divisors of $N$ are self-conjugate $\mathrm{mod} \ N$ and cases where not all prime divisors of $N$ are self-conjugate $\mathrm{mod} \ N$ are considered. We note that, to the best of our knowledge, the problem of whether some GBFs with quite simple parameters (for example type $[3,14]$) exist, remains unsolved. Thus, the nonexistence problem is far from being completely solved.

\clearpage
\appendix

\section{Numerical results related to theorems in Section~\ref{direct_consequence_of_EPM}}

\begin{remark}
In all three tables, we check the condition that $2$ has multiplicative order $\frac{\varphi(N)}{2}$ (resp $\frac{\varphi(N)}{4}$) $\mathrm{mod} \ N$ for $N=p_1 p_2$. As noted in theorem \ref{thm_two_primes_3_and_5}, if $p_1$ and $p_2$ are not Wieferich primes, then this condition implies that $2$ has multiplicative order $\frac{\varphi(N)}{2}$ (resp $\frac{\varphi(N)}{4}$) $\mathrm{mod} \ N$ for all $N=p_{1}^{e_1} p_{2}^{e_2}$. Note that the two Wieferich primes $1093$ and $3511$ are congruent to $5$ and $7$
modulo $8$ respectively, and $2$ is not a primitive root of $1093$, so this does not affect our counting procedure. For $p=3511$, since
$\mathrm{ord}_{p}(2)=\frac{p-1}{2}$, we simply require that $p_1 \neq 3511$ in
the following tables.
\end{remark}

\begin{table}[H]
\centering
\begin{tabular}{c|r|r|c}
\hline
Upper bound $B$ & \# satisfying condition 1 & \# satisfying both conditions & Ratio \\
\hline
$200$    & $84$       & $51$      & $0.6071428571$ \\
$2000$   & $3061$     & $1280$    & $0.4181639987$ \\
$20000$  & $163755$   & $66604$   & $0.4067295655$ \\
$200000$ & $10159306$ & $4280683$ & $0.4213558485$ \\
\hline
\end{tabular}
\caption{Numerical results for pairs $(p_1,p_2)$ satisfying the conditions in Theorem~1.}
\label{tab:theorem1-numerical-results}

\medskip

Here the upper bound $B$ means that we count pairs $(p_1,p_2)$ such that
$p_1 \leq B$ and $p_2 \leq B$, and the meaning is the same in the following tables.
\end{table}

\begin{table}[htbp]
\centering
\small
\setlength{\tabcolsep}{3pt}
\renewcommand{\arraystretch}{1.1}
\begin{tabular}{c|r|r|r|c|c}
\hline
$B$
& First
& Order
& Non-quartic
& $R_1$
& $R_2$ \\
\hline
$200$    & $58$       & $22$      & $34$      & $0.3793103448$ & $0.6470588235$ \\
$2000$   & $2767$     & $619$     & $1383$    & $0.2237079870$ & $0.4475777296$ \\
$20000$  & $158891$   & $35493$   & $82418$   & $0.2233795495$ & $0.4306462181$ \\
$200000$ & $10108964$ & $2169524$ & $5091168$ & $0.2146138813$ & $0.4261348280$ \\
\hline
\end{tabular}
\caption{Numerical results for pairs $(p_1,p_2)$ related to Theorem~2. Here ``First'' denotes the number of pairs satisfying $p_1 \equiv 7 \pmod{8}$, $p_2 \equiv 1 \pmod{8}$ and $\left(\frac{p_1}{p_2}\right)=-1$; ``Order'' denotes the number of pairs also satisfying the second condition in Theorem~2; and ``Non-quartic'' denotes the number of pairs satisfying the first condition and such that $2$ is not a quartic residue modulo $p_2$, which is just the condition of theorem 1 of \cite{Feng_Liu}. Moreover, $R_1=\mathrm{Order}/\mathrm{First}$ and $R_2=\mathrm{Order}/\mathrm{Non\text{-}quartic}$. Same notations hold for the next table.}
\label{tab:theorem2-nonquartic-numerical-results}
\end{table}

\begin{table}[htbp]
\centering
\small
\setlength{\tabcolsep}{3pt}
\renewcommand{\arraystretch}{1.1}
\begin{tabular}{c|r|r|r|c|c}
\hline
$B$
& First
& Order
& Non-quartic
& $R_1$
& $R_2$ \\
\hline
$200$    & $51$       & $22$      & $29$      & $0.4313725490$ & $0.7586206897$ \\
$2000$   & $2681$     & $553$     & $1330$    & $0.2062663185$ & $0.4157894737$ \\
$20000$  & $159842$   & $35333$   & $82741$   & $0.2210495364$ & $0.4270313388$ \\
$200000$ & $10055849$ & $2149134$ & $5064762$ & $0.2137197963$ & $0.4243306991$ \\
\hline
\end{tabular}
\caption{Numerical results for pairs $(p_1,p_2)$ related to Theorem~3.}
\label{tab:theorem3-nonquartic-numerical-results}
\end{table}

\FloatBarrier

\begin{remark}
As mentioned in Remark~\ref{remark_density_near_primitive_root}, the set $S_2$ of primes
$p_{2} \equiv 1 \pmod{8}$ such that $2$ is not a quartic residue modulo $p_{2}$ has natural
density $\frac{1}{8}$. For a fixed prime $p_1 \equiv 3 \pmod{4}$, the set of primes $p_2 \equiv 1 \pmod{8}$ such that $2$ is quartic residue $\mathrm{mod} \ p_2$ and $\left(\frac{p_1}{p_2}\right)=1$ correspond to the set of primes splitting completely in
$\mathbb{Q}(\mathrm{i}, \sqrt[4]{2}, \sqrt{p_1})$, and hence has natural density
$\frac{1}{16}$ by Chebotarev's density theorem. Moreover, the set of primes $p_2 \equiv 1 \pmod{8}$ such that $(\frac{p_1}{p_2})=1$ corresponds to the set of primes splitting completely in $\mathbb{Q}(\sqrt{2}, \mathrm{i}, \sqrt{p_1})$ and has natural density $\frac{1}{8}$. Therefore, the set of primes $p_2 \equiv 1 \pmod{8}$ such that $2$ is not quartic residue $\mathrm{mod} \ p_2$ and $(\frac{p_1}{p_2})=1$ has natural density $\frac{1}{16}$, and hence the set of primes $p_2 \equiv 1 \pmod{8}$ such that $2$ is not quartic residue $\mathrm{mod} \ p_2$ and $(\frac{p_1}{p_2})=-1$, which is the set of primes in $S_2$ such that $(\frac{p_1}{p_2})=-1$, has density $\frac{1}{16}$. On the other hand, the set of primes $p_2 \equiv 1 \pmod{8}$ such that $\left(\frac{p_1}{p_2}\right)=-1$ has natural density $\frac{1}{8}$, which explains why $R_2$ is roughly $2R_1$.
\end{remark}

\section{\texorpdfstring{Computation of the density of the set of primes $S_1$ in remark \ref{remark_density_near_primitive_root}}{Computation of the density of the set of primes S1 in remark 8}}

We apply theorem 3.1 of \cite{Near_primitive_root} to determine the density of the set of primes $S_1$ defined in remark \ref{remark_density_near_primitive_root}. We recall that, $S_1$ is defined to be the set of primes $p$ such that $p \equiv 1 \pmod{8}$ and $\mathrm{ord}_{p}(2)=\frac{p-1}{2}$. We restate theorem 3.1 of \cite{Near_primitive_root} in the following lemma.

\begin{lemma} \cite[Thm 3.1]{Near_primitive_root} \label{thm:3.1}
Let $1 \leq a \leq d$ be coprime integers. Let $t \geq 1$ be an integer. Put
\[
\mathcal{P}(g,t,d,a)
=
\left\{
p : p \equiv 1 \ (\mathrm{mod} \ t),\ 
p \equiv a \ (\mathrm{mod} \ d),\ 
\operatorname{ord}_p(g)=\frac{p-1}{t}
\right\}.
\]
Let $\sigma_a$ be the automorphism of $\mathbb{Q}(\zeta_d)$ determined by $\sigma_a(\zeta_d)=\zeta_d^a$. Let $c_a(m)$ be $1$ if the restriction of $\sigma_a$ to the field $\mathbb{Q}(\zeta_d) \cap \mathbb{Q}(\zeta_m,g^{1/m})$ is the identity, and let $c_a(m)=0$ otherwise. Put
$$\delta(g,t,d,a)
=
\sum_{n=1}^{\infty}
\frac{\mu(n)c_a(nt)}
{\left[\mathbb{Q}(\zeta_d,\zeta_{nt},g^{1/nt}):\mathbb{Q}\right]}.$$
Then, assuming $\mathrm{RH}$ for all number fields
$\mathbb{Q}(\zeta_d,\zeta_{nt},g^{1/nt})$ with $n$ square-free, we have
$$\mathcal{P}(g,t,d,a)(x)
=
\delta(g,t,d,a)\frac{x}{\log x}
+
O_{g,t,d}\left(\frac{x\log\log x}{\log^2 x}\right).$$
\end{lemma}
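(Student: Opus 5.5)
This is the Chebotarev-type density statement from \cite{Near_primitive_root}, quoted as Lemma~\ref{thm:3.1}. The natural approach is the Hooley method for Artin's conjecture, conditioned on a congruence.

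\textbf{Reduction to splitting conditions.} First I would rewrite membership in $\mathcal{P}(g,t,d,a)$ Galois-theoretically. For $p\nmid g$, the condition $\operatorname{ord}_p(g)=\frac{p-1}{t}$ says two things: $p\equiv 1 \pmod t$ and $g$ is a $t$-th power mod $p$, i.e. $p$ splits completely in $K_t=\mathbb{Q}(\zeta_t,g^{1/t})$. It also says that for no prime $q$ does $p$ split completely in $K_{qt}$. The congruence $p\equiv a \pmod d$ says that the Frobenius of $p$ in $\mathbb{Q}(\zeta_d)$ is $\sigma_a$.

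Combining these, I would use M\"obius inversion over squarefree $n$: the indicator of membership becomes $\sum_{n}\mu(n)\,\mathbf{1}[p \text{ splits completely in } K_{nt},\ \mathrm{Frob}_p|_{\mathbb{Q}(\zeta_d)}=\sigma_a]$. For each $n$, the inner condition is a Chebotarev condition in the compositum $\mathbb{Q}(\zeta_d)K_{nt}$. The Frobenius of $p$ must be trivial on $K_{nt}$ and equal to $\sigma_a$ on $\mathbb{Q}(\zeta_d)$. Such an element exists exactly when $\sigma_a$ is trivial on $\mathbb{Q}(\zeta_d)\cap K_{nt}$, that is, when $c_a(nt)=1$, and then it is unique. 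So the Chebotarev density of this set is $c_a(nt)/[\mathbb{Q}(\zeta_d,\zeta_{nt},g^{1/nt}):\mathbb{Q}]$.

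\textbf{Hooley's splitting of the sum.} Following Hooley, I would split the M\"obius sum into ranges: $n\le \xi_1=\frac{1}{6}\log x$, $\xi_1<q\le \xi_2=\sqrt{x}/\log^2 x$, $\xi_2<q\le \sqrt{x}\log x$, and $q>\sqrt{x}\log x$, where $q$ is a prime dividing $n$.

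In the small range I would apply the effective Chebotarev theorem under GRH. For the fields $\mathbb{Q}(\zeta_d,\zeta_{nt},g^{1/nt})$, whose degree is about $n^2$ and whose discriminant is controlled, it gives main term plus error $O(\sqrt{x}\,n^2\log(xn))$. Summed over $n\le\xi_1$, this is negligible. Completing the sum to all $n$ costs $\sum_{n>\xi_1}1/(n\varphi(n))\ll 1/\xi_1$, which yields the stated error term $x\log\log x/\log^2x$ after a slightly more careful choice of $\xi_1$.

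The middle range is handled by GRH Chebotarev applied to $K_{qt}$ alone: the contribution is $\sum_q \mathrm{Li}(x)/q^2+\sqrt{x}\log x \ll x/(\xi_1\log x)$. The large ranges are unconditional. The range up to $\sqrt{x}\log x$ is bounded by Brun--Titchmarsh for $p\equiv1 \pmod q$. For $q>\sqrt{x}\log x$, the condition forces $\operatorname{ord}_p(g)<\sqrt{x}/\log x$, so $p$ divides $\prod_{m<\sqrt{x}/\log x}(g^m-1)$. Counting prime divisors of this product gives $O(x/\log^2x)$.

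\textbf{Main obstacle.} The delicate point is the middle range $\xi_1<q\le\xi_2$, because there one needs a GRH Chebotarev estimate uniform in $q$ with the $\sqrt{x}$-quality error. I would also need to verify the uniform degree lower bound $[\mathbb{Q}(\zeta_d,\zeta_{nt},g^{1/nt}):\mathbb{Q}]\gg_{g,t,d} n\varphi(n)$. For this I would use Kummer theory: the degree of $g^{1/nt}$ over $\mathbb{Q}(\zeta_{nt})$ is $nt/O_g(1)$, with the defect coming only from $g$ being a perfect power and from the entanglement of $\sqrt{g}$ with cyclotomic fields. Since $d$ is fixed, adjoining $\zeta_d$ changes the degree by at most a bounded factor. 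With these uniform bounds, the absolute convergence of the series defining $\delta(g,t,d,a)$ follows and the asymptotic holds.
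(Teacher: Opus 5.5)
The paper does not prove this statement. It is quoted from \cite{Near_primitive_root} and used only as a black box in the appendix to obtain $\delta(2,2,8,1)=A/4$, so there is no in-paper argument to compare against.

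Your overall strategy, a Hooley-type argument, is the right one, and your Galois-theoretic reduction is correct. An element of $\mathrm{Gal}(\mathbb{Q}(\zeta_d)K_{nt}/\mathbb{Q})$ that is trivial on $K_{nt}$ and restricts to $\sigma_a$ exists iff $c_a(nt)=1$. It is then unique and central, which gives density $c_a(nt)/[\mathbb{Q}(\zeta_d,\zeta_{nt},g^{1/nt}):\mathbb{Q}]$.

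\textbf{The gap: the sieve step.} You take the first range to be $n\le\xi_1$ and afterwards treat only primes $q\mid n$ with $q>\xi_1$. Truncating by the size of $n$ has no sieve meaning.
\begin{itemize}
\item For $p\nmid gdt$ with $p\equiv a\pmod d$ and $t\mid\operatorname{ind}_p(g)$, put $m_p=\operatorname{ind}_p(g)/t$. The weight such a $p$ receives in your truncated sum is $\sum_{n\mid m_p,\,n\le\xi_1}\mu(n)$.
\item This weight need not vanish when $m_p>1$ has all its prime factors $\le\xi_1$. For $m_p=q_1q_2$ with primes $q_1,q_2\le\xi_1<q_1q_2$ it equals $-1$.
\item Such $p$ lie outside $\mathcal{P}(g,t,d,a)$, yet none of your ranges $q>\xi_1$ sees them. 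So the difference between your truncated sum and $\mathcal{P}(g,t,d,a)(x)$ is not controlled by anything you estimate later.
\end{itemize}

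\textbf{The fix.} The first range must consist of all squarefree $n\mid P(\xi_1)=\prod_{q\le\xi_1}q$. Then $\sum_{n\mid\gcd(m_p,P(\xi_1))}\mu(n)=[\gcd(m_p,P(\xi_1))=1]$ exactly. Let $\pi_n(x)$ be the number of $p\le x$ with $p\equiv a\pmod d$ that split completely in $K_{nt}$. Then, apart from the finitely many $p\mid gdt$, $\sum_{n\mid P(\xi_1)}\mu(n)\pi_n(x)$ differs from $\mathcal{P}(g,t,d,a)(x)$ by at most the number of $p\le x$ with $qt\mid\operatorname{ind}_p(g)$ for some prime $q>\xi_1$. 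That is exactly what your three later ranges bound.

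These $n$ go up to $P(\xi_1)=x^{1/6+o(1)}$, and there are $2^{\pi(\xi_1)}\le x^{(\log 2)/6}$ of them. This is the real reason for the choice $\xi_1=\frac16\log x$. With it, the GRH errors $O(\sqrt{x}\log(nx))$ total $O(x^{1/2+(\log 2)/6}\log x)$. Even your cruder bound $O(\sqrt{x}\,n^2\log(xn))$ totals only $x^{5/6+o(1)}$.

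\textbf{Where the $\log\log x$ comes from.} You attribute it to completing the series, but with the correct first range that step costs only
$\mathrm{Li}(x)\sum_{n\nmid P(\xi_1)}\frac{1}{n\varphi(n)}\ll\frac{x}{\log x}\sum_{q>\xi_1}q^{-2}\ll\frac{x}{\log^2x\,\log\log x}$.
The GRH range $\xi_1<q\le\xi_2$ costs $O(x/\log^2x)$. The term $x\log\log x/\log^2x$ comes from your Brun--Titchmarsh range. With $\xi_3=\sqrt{x}\log x$ we have $\log\xi_3/\log\xi_2=1+O(\log\log x/\log x)$, hence
$\sum_{\xi_2<q\le\xi_3}\pi(x;q,1)\ll\frac{x}{\log x}\sum_{\xi_2<q\le\xi_3}\frac1q\ll\frac{x\log\log x}{\log^2x}$.
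No retuning of $\xi_1$ is involved.

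With the first range corrected, and with the Kummer-theoretic bound $[\mathbb{Q}(\zeta_d,\zeta_{nt},g^{1/nt}):\mathbb{Q}]\gg_{g,t,d}n\varphi(n)$ that you already flag (for $g\neq 0,\pm1$), your outline does prove the statement.
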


By lemma \ref{thm:3.1}, the natural density of $S_1$ is $\delta(2,2,8,1)$, i.e. $t=g=2$, $d=8$ and $a=1$. Denote $\delta=\delta(2,2,8,1)$. By definition, when $a=1$, we have that $c_{a}(m)=1$ for all $m \in \mathbb{Z}_{>0}$. Let $I$ be the set of odd squarefree positive integers. Then $$\delta=
\sum_{r \in I} \frac{\mu(r)}{\left[\mathbb{Q}\left(\zeta_8,\zeta_{2r},2^{1/(2r)}\right):\mathbb{Q}\right]}+\sum_{r \in I} \frac{\mu(2r)}{\left[\mathbb{Q}\left(\zeta_8,\zeta_{4r},2^{1/(4r)}\right):\mathbb{Q}\right]}.$$

It suffices to determine $[\mathbb{Q}(\zeta_8,\zeta_{2r},2^{1/(2r)}):\mathbb{Q}]$ and $[\mathbb{Q}(\zeta_8,\zeta_{4r},2^{1/(4r)}):\mathbb{Q}]$ for $r \in I$. Let $L=\mathbb{Q}(\zeta_8,\zeta_{2r},2^{1/(2r)})$ and $K=\mathbb{Q}(\zeta_{8},\zeta_{2r})$. Then $[K:\mathbb{Q}]=4\varphi(r)$ and $\sqrt{2} \in K$. By proposition 5.27 of \cite{milne2022}, if we show that $2^{1/(2p)} \notin K$ for all primes $p$ dividing $r$, then we have that $[L:K]=r$. But if $2^{(1/2p)} \in K$ for some $p$ dividing $r$, then $2^{1/p} \in K$, so the ramification index of $2$ in $K$ would be a multiple of $p$, while we know that it should be $4$ by fact \ref{factorization_rules_of_primes_in_cyclotomic_fields}. Therefore we have that $[L:K]=r$ and hence $[L:\mathbb{Q}]=4r\varphi(r)$. Similarly, let $M=\mathbb{Q}(\zeta_8,\zeta_{4r},2^{1/(4r)})$. If we show that $2^{1/(2p)} \notin K$ for all primes $p$ dividing $2r$, then $[M:K]=2r$ by the same proposition as above. It suffices to check that $\sqrt[4]{2} \notin K$. Assume the converse, then $\mathbb{Q}(\sqrt[4]{2},\mathrm{i}) \subseteq K$, which is a contradiction, since $\mathrm{Gal}(\mathbb{Q}(\sqrt[4]{2},\mathrm{i}) / \mathbb{Q})$ is not abelian, while $\mathrm{Gal}(K / \mathbb{Q})$ is. Hence $[M:K]=2r$ and $[M:\mathbb{Q}]=8r\varphi(r)$. Hence we have that
$$\delta=
\sum_{r \in I} \frac{\mu(r)}{4r\varphi(r)}+\sum_{r \in I} \frac{-\mu(r)}{8r\varphi(r)}=\sum_{r \in I} \frac{\mu(r)}{8r\varphi(r)}.$$
Since $\sum_{r \in I} \frac{\mu(r)}{r\varphi(r)}=2A$, it follows that $\delta=\frac{1}{8} \cdot 2A=\frac{A}{4}$, which confirms that $S_1$ has density $\frac{A}{4}$ under GRH.

\section{\texorpdfstring{Explicit form of $\beta+\overline{\beta} \cdot \zeta_{126}^{t}$ in lemma \ref{lemma_setdetermination_for_7b}}{Explicit form of some elements used in lemma 17}}

\begin{table}[H]
\centering
\small
\caption{PARI/GP verification used in Lemma~\ref{lemma_setdetermination_for_7b}.}
\label{tab:lemma17-pari-check}
\begin{tabular}{c c p{0.68\textwidth}}
\toprule
$t$ & $2 \mid \beta+\overline{\beta}\zeta_{126}^{t}$? 
& Nonzero coordinate positions modulo $2$ \\
\midrule
$5$ 
& No 
& $\{2,7,9,12,27,28,29,34\}$ \\

$47$ 
& No 
& $\{2,7,9,12,25,26,27,32,33,35,36\}$ \\

$89$ 
& No 
& $\{2,7,9,12,25,26,28,29,32,33,34,35,36\}$ \\

\bottomrule
\end{tabular}
\end{table}

The PARI/GP basis-coordinate vectors for these three values of $t$ are as follows:
\[
\begin{aligned}
\mathbf c_{5}
&=
(0,-1,0,0,0,0,1,0,1,0,0,1,
0,0,0,0,0,0,0,0,0,0,0,0,
0,0,1,1,-1,0,0,0,0,1,0,0)^{T},\\[2mm]
\mathbf c_{47}
&=
(0,-1,0,0,0,0,1,0,1,0,0,1,
0,0,0,0,0,0,0,0,0,0,0,0,
1,1,1,0,0,0,0,-1,-1,0,1,-1)^{T},\\[2mm]
\mathbf c_{89}
&=
(0,-1,0,0,0,0,1,0,1,0,0,1,
0,0,0,0,0,0,0,0,0,0,0,0,
-1,-1,-2,-1,1,0,0,1,1,-1,-1,1)^{T}.
\end{aligned}
\]
Here
\[
  \mathbf c_t
  =
  \operatorname{Coord}_{\mathcal B}
  \bigl(\beta+\overline{\beta}\zeta_{126}^{t}\bigr),
\]
where $\mathcal B$ denotes the integral basis of
$\mathcal O_{\mathbb Q(\zeta_{63})}$ used by PARI/GP. Reducing these vectors
modulo $2$ gives the nonzero coordinate positions listed in
Table~\ref{tab:lemma17-pari-check}.

\section*{Statements and Declarations}

\textbf{Funding}
This work is supported by National Key R\&D Program of China (No. 2025YFA1017203).

\noindent \textbf{Conflict of interest}
The authors have no relevant financial or non-financial interests to disclose.

\noindent \textbf{Data availability}
No datasets were generated or analysed during the current study.

\noindent \textbf{Code availability}
The PARI/GP codes supporting the computational parts of this paper are available at
\url{https://github.com/Bluedaydreaming-Y/Codes-for-nonexistence-results-of-generalized-bent-functions}.

\bibliographystyle{plain}
\bibliography{reference}

\end{document}